\newtheorem{definition}{Definition}
\newtheorem{thm}{Theorem}[section]
\newtheorem{lm}[thm]{Lemma}
\newtheorem{propo}[thm]{Proposition}
\newtheorem{coro}[thm]{Corollary}
\newtheorem{conjecture}[thm]{Conjecture}
\newtheorem{rem}{Remark}
\newtheorem{nota}{Notation}
\def\Z{\mathbb{Z}}
\def\N{\mathbb{N}}
\def\Q{\mathbb{Q}}
\def\R{\mathbb{R}}
\def\E{\mathscr{E}}
\def\W{\mathscr{W}}
\def\P{\mathbb{P}}
\def\Res{\mathrm{Res}}
\newcommand{\cA}{\mathcal{A}}
\newcommand{\A}{{\rm A}}
\newcommand{\vv}{{\rm v}}
\newcommand{\ww}{{\rm w}}
\newcommand{\bZ}{\mathbb{Z}}
\newcommand{\bR}{\mathbb{R}}
\title{On the variation of the root number in families of elliptic curves}
\author{Julie Desjardins}
\begin{document}


\maketitle

\begin{abstract}
We prove the density of rational points on non-isotrivial elliptic surfaces by studying the variation of the root numbers among the fibers of these surfaces, conditionally to two analytic number theory conjectures (the squarefree conjecture and Chowla's conjecture). This is a weaker statement than one found in a preprint of Helfgott which proves (under the same assumptions) that the average root number is $0$ when the surface admits a place of multiplicative reduction. However, we use a different technique. The conjectures involved impose a restriction on the degree of the irreducible factors of the discriminant of the surfaces.

Moreover, we manage to drop the squarefree conjecture assumption under some technical hypotheses, and show thus unconditionally the variation of the root number on many elliptic surfaces, without imposing a bound for the degree of the irreducible factors. Under the parity conjecture, this guarantees the density of the rational points on these surfaces.
\end{abstract}


\section{Introduction}

Let $E$ be an elliptic curve defined over the field of rational numbers $\Q$.
The root number of $E$ is expressed as the product of the local factors: \[W(E)=\prod_{p\leq\infty}{W_p(E)},\]
where $p$ runs through the prime numbers and $\infty$ (representing the finite and infinite places of $\Q$), $W_p(E)\in\{\pm1\}$ and $W_p(E)=+1$ for all $p$ except a finite number of them. The \emph{local root number of $E$ at $p$} denoted by $W_p(E)$, is defined in terms of the epsilon factors of the Weil-Deligne representations of $\Q_p$ (see \cite{Del} and \cite{Tat}). Rohrlich \cite{Rohr} gives an explicit formula for the local root numbers in terms of the reduction of the elliptic curve $E$ at a prime $p\not=2,3$. Halberstadt \cite{Halb} gives tables (completed by Rizzo \cite{Rizz}) for the local root number at $p=2,3$ according to the coefficients of $E$. Observe moreover that we always have $W_\infty(E)=-1$.

The root number is equal to the sign $W(E)\in\{\pm1\}$ of the functional equation of $L(E,s)$ the $L$-function of $E$:
\begin{displaymath}
\mathscr{N}_E^{(2-s)/2}(2\pi)^{s-2}\Gamma(2-s)L(E,2-s)=W(E)\mathscr{N}_E^{s/2}(2\pi)^{-s}\Gamma(s)L(E,s).
\end{displaymath}

We restrict ourselves to elliptic curves defined over $\Q$. A large part of this work would extend without much difficulty to any number field $K$, but note that over general $K$ analytic continuation and functional equation for the $L$-function remain unknown (over $\Q$, this is guaranteed by Wiles' work \cite{Wiles} and its extention by Breuil, Conrad, Diamond, and Taylor \cite{bcdt-fermat}).

The Birch and Swinnerton-Dyer conjecture implies that the root number is related to the rank of the elliptic curve as follows:

\begin{conjecture}(Parity Conjecture)
\[W(E)=(-1)^{\mathrm{rank} \ E(\Q)}.\]
\end{conjecture}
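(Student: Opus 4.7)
The statement is famously open, so the most I can propose is a derivation conditional on the Birch and Swinnerton-Dyer conjecture, which is exactly how the parity conjecture is motivated in the introduction. My plan rests on two ingredients: the functional equation recalled in the excerpt, and the analytic formulation of BSD asserting that $\mathrm{ord}_{s=1} L(E,s) = \mathrm{rank}\, E(\Q)$.

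First I would exploit the symmetry of the completed $L$-function. Setting
\[\Lambda(E,s) := \mathscr{N}_E^{s/2}(2\pi)^{-s}\Gamma(s)L(E,s),\]
the functional equation recalled above reads $\Lambda(E,2-s) = W(E)\,\Lambda(E,s)$. Differentiating $k$ times in $s$ and evaluating at the center $s=1$ gives $\Lambda^{(k)}(E,1) = (-1)^k W(E)\,\Lambda^{(k)}(E,1)$, so the first nonvanishing derivative at $s=1$ forces $W(E) = (-1)^k$. Since $\Gamma(s)$ and the other archimedean factors are nonzero and finite at $s=1$, the order of vanishing of $\Lambda$ at $s=1$ coincides with that of $L(E,s)$, yielding the purely analytic identity $W(E) = (-1)^{\mathrm{ord}_{s=1} L(E,s)}$.

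Invoking BSD to replace this analytic order of vanishing by $\mathrm{rank}\, E(\Q)$ then closes the argument. The hard part is, of course, precisely this last step: the functional-equation manipulation is formal, but one cannot unconditionally convert the analytic rank into the Mordell-Weil rank. Substantial partial progress is known via the $p$-parity conjecture (Nekov\'a\v{r}, Kim, the Dokchitser brothers, \ldots), which replaces $\mathrm{rank}\, E(\Q)$ by the $p$-Selmer rank and sidesteps most of BSD; nevertheless, bridging the remaining gap still requires the finiteness of the $p$-primary part of the Tate--Shafarevich group, which is itself open. Consequently, any honest proposal here is conditional, and I would present it as such within the framework of the paper.
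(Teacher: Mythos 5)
The statement is a conjecture in the paper, stated without proof and motivated only by the remark that it follows from the Birch and Swinnerton-Dyer conjecture; your proposal correctly recognizes this and supplies exactly that conditional derivation (functional equation of $\Lambda(E,s)$ at the center $s=1$ plus the analytic BSD rank formula). This matches the paper's intent, and your caveats about the $p$-parity results and the finiteness of Sha are accurate.
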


As a consequence of this equality it suffices to have $W(E)=-1$ for the rank of $\ E(\Q)$ to be non-zero and in particular for $E(\Q)$ to be infinite.

Let $\E$ be an \textit{elliptic surface} over $\P^1$, i.e. a 2-dimensional projective variety endowed with a morphism $\pi: \E\rightarrow \P^1$ such that every fiber $\E_t=\pi^{-1}(t)$ is a non singular curve of genus 1 except a finite number of them. We also require that $\pi$ admits a section. That way, the elliptic surface can be seen as a family of elliptic curves. 

\begin{rem}
If we consider general elliptic surface
$\E$ over a smooth curve $C$, the case when the genus is $g(C)>1$ is uninteresting for our problem since $C(\Q)$ is finite. The case when $g(C)=1$ and $C(\Q)$ is infinite is interesting but for the moment we cannot handle it with our methods.
\end{rem}

Write the Weierstrass equation $y^2=x^3+A(T)x+B(T)$ where $A,B\in\Z[T]$. We suppose that it is a minimal Weierstrass model for $\E$, i.e. there are no irreducible polynomial $P$ such that $P^4\mid A$ and $P^6\mid B$. The discriminant $\Delta(T)=-16(4A(T)^3+27B(T)^2)$ corresponds to an homogeneous polynomial $\Delta_\E(U,V)=V^{12k}\Delta(U/V)$.
Here, $k$ is the smallest integer such that both $4k\geq\deg A$ and $6k\geq\deg B$ holds. Let $c_4(T),c_6(T)\in\left(\frac{1}{2\cdot3^3}\right)\Z[T]$ be the polynomial such that $A(T)=-27c_4(T)$ and $B(T)=-54c_6(T)$.

We write the factorisation into primitive\footnote{Each polynomial is irreducible and monic.} factors
$\Delta_\E(U,V)=c_\E\prod_{i=1}^r P_i(U,V)^{e_i}$, where $c_\E\in\Q$ and $e_i\in\N^*$. Each primitive homogeneous polynomial $P_i$ corresponds to a place of bad reduction of the surface $\E$ over $\Q(T)$. We denote by $\mathscr{B}=\mathscr{B}_\E$ the set of these polynomials of bad reduction. We will use Kodaira symbols to describe the type of their reduction more accurately (see \cite{Kodaira,Neronfibre}).
\begin{nota}
We will frequently use the two following polynomials defined from the factors of the discriminant:
\begin{enumerate} 
\item $B_\E(U,V)=\prod_{P\in\mathscr{ B}}{P(U,V)}$, the product of polynomials associated to places of bad reduction,
\item $M_\E(U,V)=\prod P(U,V)$  where $P$ runs through the polynomials associated to places of multiplicative reduction of $\E$.
\end{enumerate}
\end{nota}
We denote by $j_\E(T)=\frac{c_4(T)^3}{\Delta(T)}$ the rational function of the $j-invariant$ of the fibers of $\E$.
We distinguish the case where $\E$ is isotrivial, i.e. when $j_\E(T)$ is constant.

We consider the sets $W_+$ and $W_-$ given by \[W_\pm(\E)=\{t\in\Q: \E_t\text{ is an elliptic curve and }W(\E_t)=\pm1\}.\]

As a consequence of the parity conjecture, if $\#W_-(\E)=\infty$, then there exist infinitely many fibers of $\E$ that are non singular elliptic curves with positive rank, and this guarantees the density of the rational points on $\E$.

\subsection{Main results}

In this paper, we show the following theorem.

\begin{thm}\label{thmhelf}
Let $\E$ be a non isotrivial elliptic surface. Let $\Delta_\E(U,V)=c_\E\cdot P_1(U,V)^{e_1}\dots P_r(U,V)^{e_r}$ be the factorisation into primitive factors of the discriminant of $\E$. Suppose that 
\begin{enumerate}
\item\label{hyphelf4}
$\deg M_\E\leq3$, or $M_\E$ is the product of an arbitrary number of linear forms.
\item\label{hyphelf1} and every $P_i\in\mathscr{B}$ is such that $\deg P_i\leq6$, except those of type $I_0^*$;
\end{enumerate}

Then the sets $W_\pm$ are both infinite.

Moreover, if one assumes the parity conjecture, then the rational points of $\E$ are Zariski-dense.

\end{thm}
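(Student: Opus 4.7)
The plan is to decompose the root number $W(\E_t)$ into a rigid part coming from primes of additive reduction and an oscillating part coming from primes of multiplicative reduction, and then to show that under the stated hypotheses the oscillating part changes sign on a set of positive density. Writing $W(\E_t) = -\prod_p W_p(\E_t)$ (since $W_\infty(\E_t) = -1$), I would use Rohrlich's explicit formulas for $p \geq 5$ together with the Halberstadt--Rizzo tables for $p = 2, 3$ to express each local factor in terms of the reduction type of $\E_t$ at $p$. That reduction type is read off from the $p$-adic valuations of $c_4(t)$, $c_6(t)$ and $\Delta(t)$, so a prime $p$ contributes nontrivially only when $p \mid B_\E(t,1)$.

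First, I would decompose along the factorization of the discriminant. For each primitive polynomial $P \in M_\E$ of multiplicative reduction, the local root number at a prime $p \mid P(t,1)$ reduces via Rohrlich's formula to a Jacobi-symbol-type expression in $P(t,1)$. These assemble into a main oscillating factor of the form $\prod_{P \in M_\E} \chi_P(t)$, where each $\chi_P(t)$ is essentially the Jacobi symbol of a fixed integer against $P(t,1)$. Hypothesis~(\ref{hyphelf4}), which restricts $\deg M_\E \leq 3$ or demands $M_\E$ be a product of linear forms, is precisely the regime in which Chowla's conjecture (or its known consequences for low-degree polynomials) produces
\[\sum_{|t| \leq T} \prod_{P \in M_\E} \chi_P(t) = o(T),\]
forcing this main term to take both values $\pm 1$ on sets of positive density.

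Next, I would handle the additive places $P_i \in \mathscr{B} \setminus M_\E$ via the squarefree conjecture applied to polynomials of degree $\leq 6$, which is exactly hypothesis~(\ref{hyphelf1}); the exclusion of $I_0^*$ places is harmless since the reduction there is stably additive and its contribution is already controlled by the Kodaira classification. For a density-one set of integers $t$, the values $P_i(t,1)$ are squarefree outside a bounded set of primes, which pins down the Kodaira type of $\E_t$ at every prime $p \mid P_i(t,1)$ to a single option. Consequently the total additive contribution to $W(\E_t)$ depends on $t$ only through congruence conditions modulo a fixed integer $N$; restricting $t$ to a residue class $\bmod\ N$ where this contribution equals a fixed $\epsilon_0 \in \{\pm 1\}$, one obtains $W(\E_t) = -\epsilon_0 \cdot \prod_{P \in M_\E} \chi_P(t)$ on a positive-density subset.

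Combining the two steps, the Chowla-type estimate restricted to this residue class still yields $o(T)$ while the class contains $\gg T$ generic $t$, so both signs must appear infinitely often in $W(\E_t)$. The main obstacle will be the uniform error control: one must verify that the set of exceptional $t$ (those for which some $P_i(t,1)$ fails to be squarefree, or for which the $p = 2, 3$ analysis via Halberstadt--Rizzo is degenerate) is genuinely negligible compared to the cancellation supplied by Chowla, uniformly across all $P_i \in \mathscr{B}$. Once the variation of $W(\E_t)$ is in place, the Zariski density of rational points on $\E$ follows immediately from the parity conjecture, since each fiber with $W(\E_t) = -1$ is forced to have positive rank.
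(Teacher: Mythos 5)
Your skeleton --- a rigid part from additive places pinned down by squarefreeness and congruences, and an oscillating part from multiplicative places controlled by Chowla --- matches the paper's strategy for one of its two cases, but there are concrete gaps. The most basic one is that you work with \emph{integer} values of $t$ and sums $\sum_{|t|\le T}$. The hypotheses of the theorem are calibrated to the known \emph{two-variable homogeneous} cases of the two conjectures: Greaves' squarefree theorem (irreducible factors of degree $\le 6$) and the Chowla-type results of Helfgott--Lachand (degree $\le 3$) and Green--Tao (products of linear forms) are all binary statements. In one variable the squarefree conjecture is only known up to degree $3$ (Hooley) and Chowla only in degree $1$, so your argument as written cannot exploit hypotheses (\ref{hyphelf4}) and (\ref{hyphelf1}); you must parametrize fibers by coprime pairs $(u,v)$ with $t=u/v$, pass to the integral model $\E_{u,v}$, and sieve over $\Z^2(X)$ as in Corollary \ref{sflcrible}. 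Relatedly, your claim that each multiplicative place contributes "the Jacobi symbol of a fixed integer against $P(t,1)$" is wrong: Rohrlich's formula gives $-\left(\frac{-\overline{c_6}(u,v)}{p}\right)$, so the product over $p\mid P(u,v)$ splits into the genuinely oscillating factor $\lambda(P(u,v))$ and the symbol $\left(\frac{-\overline{c_6}(u,v)}{P(u,v)}\right)_\delta$ of one polynomial value against another; showing the latter is locally constant (on congruence classes intersected with connected components) is a nontrivial reciprocity argument (Proposition \ref{adelicsymbol}), and Chowla applies to $\lambda(M_\E(u,v))$, not to the symbols. Finally, you need Chowla's cancellation \emph{restricted to} the set where $B_\E(u,v)$ is squarefree; this does not follow formally from the two conjectures taken separately and is exactly the content of Theorem \ref{conjecturesfacteurs}.

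The second, independent gap is the case $M_\E=1$, which is permitted by hypothesis (\ref{hyphelf4}) (take $\deg M_\E=0$) and occurs for non-isotrivial surfaces whose $j$-invariant has all its poles at places of type $I_m^*$. Your only source of sign variation is $\lambda(M_\E)$, which is then identically $1$, and your argument shows at best that $W(\E_t)$ is constant on congruence classes --- no variation. The paper handles this case by a different mechanism: non-isotriviality forces a pole of $j_\E$, hence a place $Q$ of type $I_m^*$ with $m\ge1$; Manduchi's lemma (Lemma \ref{variationpotmult}) produces a prime $q_0$ and a residue class on which $q_0^2$ exactly divides $Q(u,v)$ and $-q_0^{-6}\overline{c_6}(u,v)$ is a square modulo $q_0$, which flips the corrective factor $h_Q$ (Lemma \ref{variationhpotmult}) and hence the root number relative to the companion class where $Q(u,v)$ is squarefree. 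Without this (or an equivalent) second mechanism your proof does not cover all surfaces in the theorem's scope.
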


This is a weaker statement than the one found in Helfgott's preprint \cite{Helfgott} which proves (under the same assumptions) that the average root number is $0$ when the surface admits a place of multiplicative reduction. However, the proof of our theorem is somehow more direct and less scattered.

The two assumptions in Theorem \ref{thmhelf} correspond to known cases of conjectures in analytic number theory, namely :
\begin{enumerate}
\item Hypothesis \ref{hyphelf4} is used to ensure that the polynomial $M_\E$ satisfies Chowla's conjecture (stated further on),
\item Hypothesis \ref{hyphelf1} is used to ensure that the polynomial $B_\E$ satisfies the squarefree conjecture (also stated further on).
\end{enumerate}
So Theorem \ref{thmhelf} can also be seen as saying that the Squarefree and Chowla's conjectures imply variation of the root number for a non isotrivial family.

Until now, nothing was known about the variation of the root number on surfaces with places of bad reduction (and not $I_0^*$) whose polynomials have arbitrarily large degree. To partly solve this question, we prove the following theorem:

\begin{thm}\label{nouveauthm}
Let $\E$ be a non-isotrivial elliptic surface. Let again $\Delta_\E(U,V)=c_\E\cdot P_1(U,V)^{e_1}\dots P_r(U,V)^{e_r}$ be the factorisation into primitive factors of the discriminant of $\E$. Suppose that 
\begin{enumerate}
\item\label{hyp2} for all $P_i$ with reduction of type $II$, $II^*$, $IV$ or $IV^*$, one has
\[\mu_3\subseteq\Q[T]/P_i(T,1),\]
where $\mu_3$ is the group of third roots of unity.
\item\label{hyp3} for all $P_i$ with reduction of type $III$ or $III^*$ one has 
\[\mu_4\subseteq\Q[T]/P_i(T,1),\]
where $\mu_4$ is the group of fourth roots of unity.
\item\label{hyp4}
$\deg M_\E\leq3$, or $M_\E$ is the product of an arbitrary number of linear forms,
\item\label{hyp1} and every $P_i$ of reduction of type $I_m^*$ ($m\geq1$) is such that $\deg P_i\leq6$.
\end{enumerate}

Then the sets $W_\pm(\E)$ are both infinite.

Moreover, if one assumes the parity conjecture, then the rational points of $\E$ are Zariski-dense.
\end{thm}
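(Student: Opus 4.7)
The plan is to mirror the structure of the proof of Theorem \ref{thmhelf}: decompose $W(\E_t)$ over primes using Rohrlich's (and Halberstadt--Rizzo's) local formulas, then compute the average of $W(\E_t)$ over rational parameters $t$ and show that it is strictly less than $1$ in absolute value. The novelty lies in replacing the blanket use of the squarefree conjecture for $B_\E$ by a case-by-case analysis that exploits the roots-of-unity hypotheses \ref{hyp2} and \ref{hyp3} to trivialise the local characters at the additive places of Kodaira type different from $I_m^*$.

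Concretely, after writing
\[
W(\E_t)=-\prod_{P_i\in\mathscr{B}} W_{P_i}(\E_t),
\]
each factor $W_{P_i}(\E_t)$ equals, up to a bounded contribution from $p=2,3$ and the primes ramified in $\Q[T]/P_i(T,1)$, a product over $p\mid P_i(t)$ of local terms of the shape $\eta_i\cdot\chi_i(p,t)$, where $\eta_i\in\{\pm1\}$ depends only on the Kodaira type of $P_i$ and $\chi_i$ is a specific power-residue symbol read off from Rohrlich's tables.

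The key observation is the following. For $P_i$ of type $II$, $II^*$, $IV$ or $IV^*$, the character $\chi_i$ that appears is the Legendre symbol $\left(\frac{-3}{p}\right)$, which is trivial on primes $p\equiv 1\pmod 3$. Under hypothesis \ref{hyp2}, the number field $\Q[T]/P_i(T,1)$ contains $\Q(\mu_3)=\Q(\sqrt{-3})$; whenever a prime $p$ divides some value $P_i(t)$ with $t\in\Q$, the polynomial $P_i(T,1)$ admits a root in $\mathbb{F}_p$, so $p$ has a prime of residue degree $1$ in $\Q[T]/P_i(T,1)$, hence splits in $\Q(\sqrt{-3})$, which forces $p\equiv 1\pmod 3$. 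The character factor trivialises, and $W_{P_i}(\E_t)$ reduces to $\eta_i^{\omega(P_i(t))}$ up to an explicit sign, whose contribution to the average is computable without any appeal to the squarefree conjecture. An identical argument, with $\mu_4$ in place of $\mu_3$ and $\left(\frac{-1}{p}\right)$ in place of $\left(\frac{-3}{p}\right)$, handles the types $III$ and $III^*$ under hypothesis \ref{hyp3}.

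The remaining contributions are treated exactly as in Theorem \ref{thmhelf}: the multiplicative places produce a factor governed by Chowla's conjecture applied to $M_\E$, which is valid by hypothesis \ref{hyp4}; the $I_m^*$ places with $m\geq 1$ fall within the known range of the squarefree conjecture thanks to the degree bound \ref{hyp1}; the $I_0^*$ places contribute an explicit term that requires no conjecture at all. Combining these with the analysis of the additive types above, one obtains an averaging statement showing that the average value of $W(\E_t)$ is strictly less than $1$ in modulus, and therefore $W_+(\E)$ and $W_-(\E)$ are both infinite. Density of $\E(\Q)$ then follows from the parity conjecture exactly as in Theorem \ref{thmhelf}. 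The main technical obstacle will be the patient case analysis required to confirm that Rohrlich's formulas indeed reduce, for each of the additive Kodaira types considered, to the Legendre symbol claimed, and to check that the auxiliary primes ($p=2,3$ and the ramified primes in $\Q[T]/P_i(T,1)$) contribute only a uniformly bounded factor with sign that can be tracked through the final average.
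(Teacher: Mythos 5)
Your proposal diverges from the paper's route in a fundamental way: you propose to compute (or bound) the \emph{average} of $W(\E_t)$ and deduce that it lies strictly inside $(-1,1)$, which is Helfgott's strategy, whereas the paper deliberately avoids computing any average. Instead it uses a pairing argument: Proposition \ref{existencedunM} makes the product $\prod_{p\mid\delta}W_p\cdot\prod_P g_P$ constant on congruence classes modulo $N_\E$ intersected with connected components of $\R^2-\{R_\E=0\}$; the roots-of-unity hypotheses (Lemmas \ref{constance24} and \ref{constance3}) force $h_P\equiv+1$ at the additive places of type $II,II^*,III,III^*,IV,IV^*$; and the sieves (Corollaries \ref{crible} and \ref{sflcrible}) then produce two infinite families $\mathscr{F}_1,\mathscr{F}_2$ inside one such class on which exactly one factor of the root number formula flips sign. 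Your observation that $\mu_3\subseteq\Q[T]/P_i(T,1)$ forces every $p\mid P_i(u,v)$ to have a degree-one prime above it, hence to split in $\Q(\sqrt{-3})$ and satisfy $p\equiv1\bmod 3$, is exactly the content of Lemma \ref{constance24} and is the correct way to exploit hypotheses (1) and (2). Note, however, that for types $II,II^*$ Rohrlich's character is $\left(\frac{-1}{p}\right)$, not $\left(\frac{-3}{p}\right)$; the symbol $\left(\frac{-3}{p}\right)$ enters only through the monodromy at primes with $\nu_p(P_i(u,v))\equiv2,4\bmod 6$ (Table \ref{contributions}), which is precisely the part that would otherwise require the squarefree conjecture. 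The main term $\left(\frac{-1}{P_i(u,v)}\right)_\delta$ is not trivialised by $p\equiv1\bmod3$ and must instead be controlled by reciprocity and local constancy (Proposition \ref{adelicsymbol}).

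The genuine gap is the case $M_\E=1$. When there is no multiplicative place, Chowla's conjecture gives you nothing, all the characters you have trivialised contribute $+1$, and the remaining locally constant factors could perfectly well equal $+1$ identically: your sketch provides no mechanism forcing both signs to occur. The paper's solution is that, since $j_\E$ is non-constant, it has a pole, which in the absence of $I_m$ places must be a place $Q_1$ of type $I_m^*$ with $m\geq1$; Manduchi's Lemma \ref{variationpotmult} then produces a prime $q_0$ and a residue class on which $Q_1(u,v)$ is divisible exactly by $q_0^2$ and $-q_0^{-6}\overline{c_6}(u,v)$ is a square modulo $q_0$, so that $h_{Q_1}$ takes the value $-1$ there while every other factor of the formula is unchanged (Lemma \ref{variationhpotmult} and Proposition \ref{variationglobale}). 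Without this step, or some substitute source of sign variation, the theorem is not proved for surfaces with $M_\E=1$. You would also need to explain how the average over the $I_m^*$ places is actually evaluated --- this is where the degree bound $\deg P_i\leq6$ and Greaves's theorem enter --- rather than asserting that it is ``computable''.
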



There are families of elliptic surfaces whose coefficients have factors of unbounded degree verifying the hypotheses of Theorem \ref{nouveauthm}. The following Corollary \ref{thmexemple} provides such examples.

\begin{coro}\label{thmexemple}
Let $Q\in\Q[T]$ be a squarefree polynomial such that its irreducible factors have degree less or equal to 6 and not equal to $T$. Let $N\in\N^*$. Put \[P(T)=3\alpha^2Q(T)^2+\beta^2T^{2N},\]
and $\alpha,\beta\in\Z$ coprime.

Let $\E$ be the elliptic surface given by the equation \[\E:y^2=x^3-27P(T)Q(T)^2x-54\beta P(T)Q(T)^3T^N.\]

Then $W_+$ and $W_-$ are infinite.

Moreover, if we assume the parity conjecture to hold, then the rational points of $\E$ are Zariski-dense.
\end{coro}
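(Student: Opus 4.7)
The plan is to verify the four hypotheses of Theorem \ref{nouveauthm} for the surface $\E$ and then conclude. First I compute the discriminant: from $\Delta = -16(4A^3 + 27B^2)$ with $A = -27PQ^2$ and $B = -54\beta PQ^3 T^N$, and the identity $\beta^2 T^{2N} - P = -3\alpha^2 Q^2$ (immediate from the definition of $P$), the cross term collapses to give
\[\Delta(T) = 2^6 \cdot 3^{10} \cdot \alpha^2 \cdot P(T)^2 \cdot Q(T)^8.\]
The coprimality of $\alpha$ and $\beta$, together with the hypothesis $T \nmid Q$, imply that $P$, $Q$, and $T$ are pairwise coprime in $\Q[T]$, so the bad factors of $\Delta_\E$ split cleanly among the irreducible factors of $P$, those of $Q$, and possibly the place at infinity.

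Next I identify the Kodaira types via the valuations of $c_4 = PQ^2$ and $c_6 = \beta PQ^3 T^N$. For each irreducible $R \mid Q$, $(v_R(c_4), v_R(c_6), v_R(\Delta)) = (2,3,8)$, giving type $I_2^*$ (minimal since $v_R(c_4) < 4$); for each $R \mid P$, the triple is $(1,1,2)$, giving type $II$; at $R = T$, one gets good reduction. At infinity, a homogenization analysis with the integer $k$ chosen so that $4k \geq \deg A$ and $6k \geq \deg B$ shows that the reduction is good, multiplicative $I_m$, or additive $I_m^*$ depending on the parities and the relative sizes of $N$ and $\deg Q$, but in every case the associated factor is simply the linear form $V$.

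It remains to verify the four hypotheses. Hypothesis (\ref{hyp3}) is vacuous (no reduction of type $III$ or $III^*$). Hypothesis (\ref{hyp1}) holds because the $I_m^*$-factors are either irreducible factors of $Q$ (of degree $\leq 6$ by assumption) or $V$ (degree $1$). Hypothesis (\ref{hyp4}) holds because $M_\E$ is either trivial or equal to the linear form $V$, hence a product of linear forms. The substantial step is hypothesis (\ref{hyp2}): for each $R \mid P$ (of type $II$), one must show $\mu_3 \subseteq K := \Q[T]/(R)$. But reducing $P \equiv 0$ modulo $R$ gives $3(\alpha Q)^2 + (\beta T^N)^2 \equiv 0$ in $K$, where $\alpha Q$ and $\beta T^N$ are invertible by the coprimality established above; hence $-3 = \bigl(\beta T^N/(\alpha Q)\bigr)^2$ is a square in $K$. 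Since $\Q(\mu_3) = \Q(\sqrt{-3})$, this gives $\mu_3 \subseteq K$. Non-isotriviality is immediate from $j_\E = 1728 \, c_4^3/\Delta$ being non-constant, and Theorem \ref{nouveauthm} then yields the conclusion. The main obstacle is the case analysis at the place at infinity; the saving grace is that the infinity factor is always the single linear form $V$, which is compatible with every hypothesis.
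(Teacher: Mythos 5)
Your proposal is correct and follows essentially the same route as the paper: compute $\Delta = (\text{const})\cdot\alpha^2 P^2 Q^8$, read off the Kodaira types ($I_2^*$ at factors of $Q$, additive types at factors of $P$, a case analysis at infinity yielding at worst the linear form $V$), and verify hypothesis (\ref{hyp2}) by noting that $-3=\bigl(\beta T^N/(\alpha Q)\bigr)^2$ in $\Q[T]/(R)$ for $R\mid P$. The only slight imprecision is asserting type $II$ at every factor of $P$, which presumes $P$ squarefree; the paper allows $P=P_1^{e_1}\cdots P_n^{e_n}$ and gets types $II$, $IV$, $I_0^*$, $IV^*$, $II^*$ or $I_0$ according to $e_i\bmod 6$, but your $\mu_3$ argument covers all of these, so the conclusion is unaffected.
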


In this example, one has $\deg P=2\max (\deg Q,N)$, which can be as large as we want.

\subsection{Previous results}

Rohrlich pioneered the study of variations of root numbers on algebraic families of elliptic curves in \cite{Rohr}. Many authors followed: see, for instance, \cite{Manduchi}, \cite{GM}, \cite{Rizz}, \cite{HCC}, \cite{Helfgott}, \cite{VA}.

Some authors (notably \cite{CS},\cite{VA}) remarked that it can happen that the root number of the fibers takes always the same value when the elliptic surface is isotrivial, i.e. its modular invariant $j_\E$ has no $T$-dependence. For this reason, we restrict our attention in this paper to non-isotrivial elliptic surfaces (leaving the isotrivial case for another paper \cite{Desjardins4}).

We use Rohrlich's formula for local root numbers and a study of the monodromy of the singular fibers with Tate's algorithm \cite{Tatefibre} to assemble a root number formula for a general elliptic surface (see Theorem \ref{formuledusigne}). This formula splits into different parts corresponding to the "contribution" of a place of bad reduction on the surface, in a way previously studied by Manduchi \cite{Manduchi} and Helfgott \cite{Helfgott}. 

In Helfgott's unpublished paper \cite{Helfgott}, the Squarefree conjecture and Chowla's conjecture are used to prove that the average root number over $\Q$ is $av_\Q W(\E_t)=0$, on an elliptic surface $\E$ with at least one place of multiplicative reduction (i.e. with $M_\E\not=1$). When the surface admits no place of multiplicative reduction (i.e. when $M_\E=1$) then it was stated in \cite{HCC} that $-1<av_\Q W(\E_t)<+1$. The author completes the demonstration of this result and reviews Helfgott's paper in her phD thesis \cite{Desjardinsthese}.

We then combine our formula for root number with an adaption of a sieve introduced by Gouvêa, Mazur, and Greaves \cite{GM}, \cite{Greaves}, already improved by V\'arilly-Alvarado \cite{VA}. Our modified sieve Corollary \ref{sflcrible}, deduced from Theorem \ref{conjecturesfacteurs}, allows us to search for infinitely many pairs of fibers on a surface that have opposite root numbers. This proves our density results (Theorems \ref{thmhelf} and \ref{nouveauthm}). For a similarly motivated idea, see \cite{Manduchi} and \cite{VA}.
 
\subsection{Outline of the paper}

In section \ref{sectionconjectures}, we present the two analytic number theory conjectures
 and prove  Theorem \ref{conjecturesfacteurs}, an auxiliary result which has its own interest: a way to make the conjectures work at the same time. In section \ref{sectionformuledusigne}, we present a formula for the root number of the fibers of an elliptic surface. In section \ref{sectionvariation}, we use our formula to give conditions on the pairs of coprime integers $[m_1,n_1]$ and $[m_2,n_2]$ under which $\E_{\frac{m_1}{n_1}}$ and $\E_{\frac{m_2}{n_2}}$ have opposite root number (Lemmas \ref{VariationIm} and \ref{variationglobale}). In section \ref{sectionproofHelfgott}, we use the sieve of Corollary \ref{sflcrible} obtained from Theorem \ref{conjecturesfacteurs} to prove Theorems \ref{thmhelf} and \ref{nouveauthm}. In section \ref{sectionexample}, we prove Corollary \ref{thmexemple}, which gives examples of surfaces satisfying the hypotheses of Theorem \ref{nouveauthm} which have coefficients with irreducible factors of arbitrarily large degree. 

\subsection{Acknowledgments} The author is very grateful to her supervisor M. Hindry for suggesting this beautiful problem, and for his support in proving Theorem \ref{conjecturesfacteurs}. She would also like to thank R. de la Bretèche for carefully checking an earlier version of the proof of this theorem. The author also benefitted from conversations with K. Destagnol and R. Griffon, and from correspondence with H. Helfgott. The manuscript was greatly improved by insightful comments from D. Rohrlich. Finally, she thanks the anonymous referee for several useful suggestions.

\section{Two analytic number theory conjectures}\label{sectionconjectures}

In this section, we treat simultaneously the cases of a polynomial $f$ of degree $d$ with integer coefficients, either in one variable, or homogeneous in two variables - this way either we have $f(T)=a_0+\dots+a_dT^d\in\bZ[T]$ or $f(U,V)=a_0V^d+\dots+a_dU^d\in\bZ[U,V]$. We denote by $h=1$ or $2$ the number of variables and $\vv$ a vector with integer coefficients in $\bZ^h$, i.e. $\vv\in\bZ$ or $\vv\in\bZ^2$.
We study two properties describing the factorisation of $f(\vv)$ : the first describes the proportion of squarefree values, the second the parity of the number of prime factors.

It is natural to assume that $f$ is {\it primitive} (i.e. that its content is equal to 1) and that $f$ is squarefree (i.e. that there is no polynomial $f_0$ such that $f_0^2\mid f$), which is the same as supposing that its discriminant $D_f$ is not zero. We will make these assumptions throughout the present section.

We denote by $\cA$ an arithmetic progression\footnote{The expression  "arithmetic progression" is usually only used in the case where $h=1$. In the case $h=2$, $\cA$ is a "translated lattice".} of the form
\[\cA:=\{a+Nt\;|\; t\in\bZ\}\quad{\rm or}\quad \cA:=\{(a+Nu,b+Nv)\;|\; (u,v)\in\bZ^2\},\]
where $N\not=0$ in the first case and $a$ and $b$ are coprime to $N\not=0$ in the second case. One can also write $\cA=\phi(\bZ)$ with $\phi(t)=a+Nt$ or 
$\cA=\phi(\bZ^2)$ with $\phi(u,v)=(a+Nu,b+Nv)$.

We denote by $|\cdot|$ the usual absolute value on $\bR$ or the max norm on $\bR^2$. We also introduce the notations
\[\bZ^h(X)=\left\{\vv\in\bZ^h\;|\;|\vv|\leq X\right\},\;\; \cA(X):=\left\{\vv\in\cA\;|\;|\vv|\leq X\right\}\quad{\rm and}\quad \A(X):=\sharp\cA(X),\]
so that $\A(X)$ is roughly proportional to $X^h$. 
More precisely, we have $\A(X)\sim \left(\frac{X}{N}\right)^h$.

\subsection{Squarefree conjecture}

We want to estimate the proportion of squarefree values in an arithmetic progression. 

\noindent{\bf Notations.} Put 
 $\delta_{f}:=\gcd\left\{f(\vv)\;|\;\vv\in\bZ^h\right\}$, then  let $d_{f}$ be the smallest integer such that $\delta_f/d_{f}$ is squarefree. Write
 $d_{f}=\prod_pp^{\nu_p}$.
We denote by $t_f({p})$ the number of solutions modulo $p^{2+\nu_p}$ of $f(\vv)d_{f}^{-1}\equiv 0\mod p^2$, or $f(\vv)\equiv 0\mod p^{2+\nu_p}$ and we put
\[C_f:=\prod_p\left(1-\frac{t_f({p})}{p^{h(2+\nu_p)}}\right)\]

Note that 
the product defining $C_f$ is absolutely convergent since for $h=1$ (resp. $h=2$) one has $t_f(p)=O(1)$ (resp. $t_f(p)=O(p^2)$).

We define $Sqf(X)$ to be the number of elements $\vv\in\Z^h(X)$ such that $f(\vv)d_{f,\cA}^{-1}$ is squarefree.

\begin{conjecture}\label{conjsfc3} (Squarefree Conjecture) 
Let $f$ be a primitive squarefree polynomial with integer coefficients. Then we have
\begin{equation}\label{nbdesqf2} Sqf(X)
=C_{f}X^h+o(X^h)
\end{equation}
\end{conjecture}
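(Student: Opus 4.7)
The plan is to attack the asymptotic \eqref{nbdesqf2} by a standard local-to-global inclusion--exclusion on the prime squares dividing $f(\vv)/d_f$, combined with a split into small and large square-free moduli.

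First I would rewrite
\[
Sqf(X) = \sum_{\vv\in\bZ^h(X)} \mu^2\!\left(\frac{f(\vv)}{d_f}\right) = \sum_{q\geq 1} \mu(q)\, N(X;q),
\]
where $N(X;q) = \#\{\vv\in\bZ^h(X) : q^2 \mid f(\vv)/d_f\}$. For squarefree $q$, the Chinese Remainder Theorem reduces $N(X;q)$ to a product of local counts, each of which is exactly $t_f(p)$ by the definition of $t_f$ (the shift by $p^{\nu_p}$ precisely absorbs the $d_f$-denominator).

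Next I would introduce a parameter $Y=Y(X)\to\infty$ slowly and handle small moduli $q\leq Y$ by a direct CRT computation. The error per residue class is $O(1)$, so
\[
\sum_{q\leq Y} \mu(q)\, N(X;q) = X^h \prod_{p\leq Y} \left(1 - \frac{t_f(p)}{p^{h(2+\nu_p)}}\right) + O\!\left(Y^{2h}\right) = C_f X^h + o(X^h),
\]
where the reconstruction of the full Euler product relies on the absolute convergence noted in the text (following from $t_f(p)=O(1)$ for $h=1$ and $t_f(p)=O(p^2)$ for $h=2$). Choosing $Y$ a small enough power of $X$ keeps this error negligible.

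The main obstacle, and the reason the statement is phrased as a conjecture, is to control the tail
\[
\sum_{q > Y} |\mu(q)|\, N(X;q) = o(X^h).
\]
The dominant contribution comes from individual primes $p\in(Y, X^{\deg f/2}]$ with $p^2\mid f(\vv)$; bounding their total is a square-sieve problem. I would appeal to classical results: Hooley's unconditional method covers $h=1$ with $\deg f\leq 3$, and Greaves's refinement handles $h=2$ with $\deg f\leq 6$. For larger degrees the required tail bound would follow from the $abc$-conjecture but is presently out of reach, which is why the statement is left as a conjecture. I would then record the fact that this is unconditional precisely in the degree ranges imposed in Hypotheses~\ref{hyphelf1} and~\ref{hyp1}, so that Theorems~\ref{thmhelf} and~\ref{nouveauthm} can ultimately invoke it for the polynomial $B_\E$.
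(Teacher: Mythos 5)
The statement you are asked about is a \emph{conjecture} in the paper, not a theorem: the paper offers no proof of Conjecture~\ref{conjsfc3}, it only records the known cases as Theorem~\ref{theosqf} (Hooley for $h=1$, Greaves for $h=2$) and otherwise assumes the conjecture as a hypothesis. Your write-up is therefore correctly calibrated: the opening reduction (expanding $\mu^2$ via $\sum_{q}\mu(q)N(X;q)$, computing the small moduli $q\leq Y$ by CRT to reconstruct the Euler product $C_f$, and isolating the tail $\sum_{q>Y}|\mu(q)|N(X;q)$) is the standard and essentially the only known route, and you rightly identify the tail over primes in $(Y, X^{h/2}]$ (equivalently the paper's alternative formulation, Conjecture~\ref{sfc2}) as the genuine obstruction that makes the statement conjectural. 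What you have is not, and cannot be, a complete proof; it is an accurate account of why the statement is open, which is consistent with the paper's treatment.

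Two small corrections so that your appeal to the known cases matches Theorem~\ref{theosqf} precisely. First, the Hooley--Greaves hypotheses are on the degrees of the \emph{irreducible factors} of $f$ (at most $3h$, i.e.\ $3$ for one variable and $6$ for two homogeneous variables), not on the total degree of $f$; this distinction matters because the paper applies the result to products such as $B_\E$ whose total degree can be large while each factor stays within range. Second, the tail range should be $p\leq X^{h\deg_{\mathrm{irr}}/2}$ factor by factor rather than a single exponent $\deg f/2$; for the medium range $Y<p\leq X$ the paper's Proposition~\ref{1.13} already gives the unconditional bound $O(X^h/p^2+X^{h-1})$, and it is only the genuinely large primes that require Hooley's or Greaves's square sieve (or, beyond degree $3h$, something like the $abc$-conjecture, as you say).
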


Observe that the following statement is equivalent to Conjecture \ref{conjsfc3}\footnote{A proof of this equivalence can be found in \cite[1.3.1.]{Desjardinsthese}.}.

\begin{conjecture}\label{sfc2} (Squarefree Conjecture, alternative version) 
\begin{equation}
\sharp\left\{\vv\in\bZ^h(X)\;|\; \text{there exists $p>X^{h/2}$, such that $p^2$ divides $f(\vv)$}\right\}=o(X^h).
\end{equation}
\end{conjecture}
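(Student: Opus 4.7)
My plan is to establish the $o(X^h)$ bound asserted by Conjecture \ref{sfc2} by decomposing non-squarefree values of $f$ according to the size of the witnessing prime, reducing to Conjecture \ref{conjsfc3}. For each prime $p$ set
\[E_p(X) := \{\vv \in \bZ^h(X) : p^2 \mid f(\vv) d_f^{-1}\},\]
which for $p$ coprime to $d_f$ (i.e.\ all but finitely many $p$) coincides with $\{p^2 \mid f(\vv)\}$. Let $\mathcal{S}(X) := \bigcup_{p \leq X^{h/2}} E_p(X)$ and $\mathcal{L}(X) := \bigcup_{p > X^{h/2}} E_p(X)$, so that $X^h - Sqf(X) = |\mathcal{S}(X) \cup \mathcal{L}(X)| + O(1)$ and the goal becomes $|\mathcal{L}(X)| = o(X^h)$.

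The heart of the proof is an unconditional sieve estimate
\[|\mathcal{S}(X)| = (1 - C_f) X^h + o(X^h).\]
I would show this by introducing a slowly growing parameter $Y = Y(X) \to \infty$ and splitting $\mathcal{S}(X)$ at $Y$. On the small range $p \leq Y$, the Chinese Remainder Theorem applied modulo $\prod_{p \leq Y} p^{2+\nu_p}$ gives
\[\Bigl|\bigcup_{p \leq Y} E_p(X)\Bigr| = \Bigl(1 - \prod_{p \leq Y}\bigl(1 - t_f(p) p^{-h(2+\nu_p)}\bigr)\Bigr) X^h + O_Y(X^{h-1}),\]
and the product tends to $C_f$ as $Y \to \infty$. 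On the medium range $Y < p \leq X^{h/2}$, the direct estimate $|E_p(X)| \ll t_f(p)(X/p^2)^h$ combined with the stated bounds $t_f(p) = O(1)$ for $h = 1$ and $t_f(p) = O(p^2)$ for $h = 2$ yields $\sum_{Y < p \leq X^{h/2}} |E_p(X)| \ll X^h/Y$, and inclusion--exclusion overlaps admit a parallel treatment via CRT modulo $(pq)^2$.

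Granting this estimate, Conjecture \ref{conjsfc3} --- i.e.\ $X^h - Sqf(X) = (1-C_f)X^h + o(X^h)$ --- yields $|\mathcal{L}(X) \setminus \mathcal{S}(X)| = |\mathcal{S}(X) \cup \mathcal{L}(X)| - |\mathcal{S}(X)| = o(X^h)$. For the remaining intersection $|\mathcal{L}(X) \cap \mathcal{S}(X)|$, every element has $p^2 q^2 \mid f(\vv) d_f^{-1}$ with $p > X^{h/2}$ and $q \leq X^{h/2}$, and CRT modulo $(pq)^2$ gives
\[|\mathcal{L}(X) \cap \mathcal{S}(X)| \ll X^h \Bigl(\sum_{q \leq X^{h/2}} \frac{t_f(q)}{q^{2h}}\Bigr)\Bigl(\sum_{p > X^{h/2}} \frac{t_f(p)}{p^{2h}}\Bigr).\]
Since $t_f(p)/p^{2h} = O(p^{-2})$ in both cases $h = 1, 2$, the first sum converges while the second is $O(X^{-h/2})$, so the intersection is $O(X^{h/2}) = o(X^h)$. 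Combining gives $|\mathcal{L}(X)| = o(X^h)$, which is the desired bound.

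The main obstacle is the uniformity of the unconditional sieve estimate for $|\mathcal{S}(X)|$ as the auxiliary parameter $Y$ diverges with $X$. The Chinese-remainder error $O_Y(X^{h-1})$, the accumulated inclusion--exclusion overlaps on $[2,Y]$, and the medium-range truncation error $X^h/Y$ must all be balanced so that some $Y(X) \to \infty$ keeps the total $o(X^h)$. The two-variable case $h = 2$ is the subtler one, requiring the bound $t_f(p) = O(p^2)$, which in turn relies on the squarefreeness of $f$ to control the degeneracy of the mod-$p$ zero locus at generic points.
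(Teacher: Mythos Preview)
First, note that the paper does not itself prove this statement: Conjecture~\ref{sfc2} is presented as an alternative formulation of the Squarefree Conjecture, and the equivalence with Conjecture~\ref{conjsfc3} is deferred to the author's thesis via a footnote. There is thus no in-paper proof to compare against; what follows is an assessment of your argument for the implication Conjecture~\ref{conjsfc3} $\Rightarrow$ Conjecture~\ref{sfc2}.

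Your overall strategy --- establish unconditionally that $|\mathcal{S}(X)| = (1 - C_f) X^h + o(X^h)$, subtract from the non-squarefree count coming from Conjecture~\ref{conjsfc3} to get $|\mathcal{L}(X) \setminus \mathcal{S}(X)| = o(X^h)$, and treat $|\mathcal{L}(X) \cap \mathcal{S}(X)|$ separately --- is the natural one, and the first two steps are essentially sound. One minor point: for $h = 2$ your ``direct estimate'' $|E_p(X)| \ll t_f(p)(X/p^2)^h$ is not valid in the range $X^{1/2} < p \leq X$, since the naive box count gives $t_f(p)(2X/p^2 + O(1))^2 \ll X^2/p^2 + p^2$ and the $p^2$ term dominates. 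One needs the Greaves lattice refinement (the paper's Proposition~\ref{1.13}) to obtain $|E_p(X)| \ll X^2/p^2 + X$; the medium-range sum is then $\ll X^2/Y + X^2/\log X$, which is still $o(X^2)$ though not the $X^2/Y$ you claim.

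The real gap is your bound for $|\mathcal{L}(X) \cap \mathcal{S}(X)|$. The displayed estimate would follow from $|E_p(X) \cap E_q(X)| \ll t_f(p)\,t_f(q)\,X^h/(pq)^{2h}$, but this is only the main term of the lattice-point count and drops the boundary contribution. Since $p > X^{h/2}$ forces $(pq)^2 > X^h \geq X$, the boundary term always dominates here. For $h=1$ the correct per-pair bound is $|E_p \cap E_q| = O(1)$, and summing over the admissible pairs $q \leq X^{1/2}$, $X^{1/2} < p \ll X^{\deg f/2}$ gives a total of order $X^{\deg f/2}/\log X$, which is \emph{not} $o(X)$ once $\deg f \geq 3$. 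For $h=2$ the refined bound $|E_p \cap E_q| \ll X^2/(pq)^2 + X$ likewise produces, via the $+X$ term, a contribution of order $X\cdot\pi(X)\cdot\pi(X^{\deg f/2}) \gg X^2$. In short, the union bound over pairs $(p,q)$ is too crude regardless of how sharp the individual estimate is. A correct treatment of the intersection typically routes through the arithmetic-progression version (Conjecture~\ref{sfc1}): one restricts to the finitely many cosets modulo $q^2$ for each fixed small $q \leq Y$ and invokes the squarefree asymptotic on each coset, rather than summing naively over all large $p$. Your self-identified ``main obstacle'' (uniformity in $Y$ for the small-prime sieve) is actually routine; the intersection is where your argument as written breaks.
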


To study the almost\footnote{We allow the squarefree part of the values of $f$ to be a certain fixed integer.} squarefree values we proceed as follows. Let $\cA=(a,b)+
N\Z^2\subset\Z^2$ (or $\cA=a+N\Z\subset\Z$) be an arithmetic progression. We put
$g:=f\circ\phi$, $d_{f,\cA}:=d_{g}$, 
\[C_{f,\cA}:=\frac{1}{N^h}\prod_{p\nmid N}{\left(1-\frac{t_f(p)}{p^{h(2+\nu_p)}}\right)}\] 
and let $Sqf'(X)$ to be the number of elements $\vv\in\cA(X)$ such that $f(\vv)$ is squarefree.
We can write the Squarefree conjecture in a third form:

\begin{conjecture}\label{sfc1} (Squarefree Conjecture on arithmetic progressions)
\begin{equation}\label{nbdesqfA}Sqf'(X)=C_{f,\cA}X^2+o(X^2)
\end{equation}
\end{conjecture}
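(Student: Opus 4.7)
The plan is to derive Conjecture \ref{sfc1} from Conjecture \ref{conjsfc3} by a change of variable: transport $f$ to the polynomial $g := f \circ \phi$ on all of $\Z^h$, apply the version of the conjecture already stated for polynomials on $\Z^h$, and then reinterpret the local factors. Since the paper has already announced (in a footnote) the equivalence of Conjectures \ref{conjsfc3} and \ref{sfc2}, this reduction is the remaining piece needed to put all three statements on the same footing.

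First, $\phi\colon\Z^h\to\cA$ is an affine bijection, and $|\phi(\ww)|\leq X$ is equivalent to $|\ww|\leq X/N+O(1)$; hence
\[Sqf'(X) = \#\{\ww\in\Z^h(X/N) : g(\ww)\text{ squarefree}\} + O(X^{h-1}).\]
Because $\phi$ is a $\Q$-affine isomorphism, the discriminant of $g$ is a nonzero rational multiple of that of $f$, so $g$ is squarefree; and the content of $g$ is exactly $d_{f,\cA}=d_g$ by construction. Conjecture \ref{conjsfc3} therefore applies to $g$ and yields
\[\#\{\ww\in\Z^h(X/N) : g(\ww)d_g^{-1}\text{ squarefree}\} = C_g(X/N)^h + o(X^h).\]

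Second, I would identify $C_g/N^h$ with the constant $C_{f,\cA}$ appearing in the statement by comparing Euler products prime by prime. For $p\nmid N$, the reduction of $\phi$ modulo $p^{2+\nu_p}$ is a bijection of $(\Z/p^{2+\nu_p}\Z)^h$ (as $N$ is then a unit), so $t_g(p)=t_f(p)$ and the corresponding Euler factors coincide with those of $C_{f,\cA}$. For $p\mid N$, the image $\phi(\Z^h)\bmod p^k$ is a coset of the sublattice $N\cdot(\Z/p^k\Z)^h$, of index $p^{h\,v_p(N)}$; a local count then shows that the $p$-th Euler factor of $C_g$ contributes exactly $p^{-h\,v_p(N)}$. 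Assembling these factors over $p\mid N$ produces precisely the prefactor $1/N^h$ in the definition of $C_{f,\cA}$, so $C_g = N^h\cdot C_{f,\cA}$, and substitution gives $Sqf'(X)=C_{f,\cA}X^h+o(X^h)$, which is the assertion of Conjecture \ref{sfc1} (the exponent in \eqref{nbdesqfA} being naturally read as $X^h$, in agreement with the other two forms).

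The main technical point, and the place where one has to be careful, is the comparison of local factors at primes $p\mid N$: one must check that the coset restriction imposed by $\phi(\Z^h)$ intersects the zero set of $f\bmod p^{2+\nu_p}$ in the expected proportion, and that the normalization by $d_g=d_{f,\cA}$ is coherent at those primes (so that shifting by $(a,b)$ and scaling by $N$ do not introduce spurious $p$-adic mass). Everything else is bookkeeping, so once this local computation is settled the statement follows.
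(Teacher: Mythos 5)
First, a point of framing: the statement you are proving is labelled a \emph{conjecture}, and the paper contains no proof of it --- it only asserts that Conjectures \ref{conjsfc3}, \ref{sfc2} and \ref{sfc1} are equivalent, and even for that it cites the author's thesis rather than giving an argument. So your proposal is really a proof of the implication ``Conjecture \ref{conjsfc3} (for all admissible polynomials) implies Conjecture \ref{sfc1}'', which is the sensible thing to attempt, and the change-of-variables strategy ($g=f\circ\phi$, apply the $\Z^h$-version to $g$, rescale, compare Euler products) is certainly the intended route. For $p\nmid N$ your identification $t_g(p)=t_f(p)$ is correct, since $\phi$ is a bijection modulo every power of such $p$.

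The problem is in the local computation at $p\mid N$, which you yourself identify as the crux. Your claim that the $p$-th Euler factor of $C_g$ equals $p^{-h\nu_p(N)}$ is both false and inconsistent with your conclusion: if it were true, then $C_g=\bigl(\prod_{p\mid N}p^{-h\nu_p(N)}\bigr)\prod_{p\nmid N}\bigl(1-t_f(p)p^{-h(2+\nu_p)}\bigr)=N^{-h}\cdot N^hC_{f,\cA}=C_{f,\cA}$, and substituting into $C_g(X/N)^h$ would give $C_{f,\cA}X^h/N^h$, off by $N^h$ from the target. What your argument actually needs is that every Euler factor of $C_g$ at $p\mid N$ equals $1$ (i.e.\ $t_g(p)=0$ for $p\mid N$), so that $C_g=\prod_{p\nmid N}(\cdots)=N^hC_{f,\cA}$; the prefactor $1/N^h$ in $C_{f,\cA}$ comes from the rescaling $(X/N)^h$, not from the local factors. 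And $t_g(p)=0$ is not automatic even with $a,b$ coprime to $N$: take $f(U,V)=U+V$, $N=2$, $(a,b)=(1,1)$, so $g(u,v)=2(1+u+v)$, $\delta_g=2$ but $d_g=1$, and the factor at $p=2$ is $1-8/2^4=1/2$, which is neither $1$ nor $2^{-2}$. Consequently the identity $C_g=N^hC_{f,\cA}$ fails in general: the change of variables proves an asymptotic with constant $N^{-h}C_g$, and matching this with $C_{f,\cA}$ as literally defined requires either amending the definition of $C_{f,\cA}$ to include the local factors of $g$ at $p\mid N$, or an extra hypothesis forcing $t_g(p)=0$ there. Two smaller gaps: for $h=2$ the polynomial $g=f\circ\phi$ is not homogeneous, so it does not literally satisfy the hypotheses of Conjecture \ref{conjsfc3} as stated; and the content of $g$ (gcd of coefficients) is not the same as $\delta_g$ (gcd of values), from which $d_g$ is actually defined, so ``the content of $g$ is exactly $d_g$'' is not correct as written.
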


Note that Conjectures \ref{conjsfc3}, \ref{sfc2} and \ref{sfc1} are equivalent to one another.

\begin{thm}\label{theosqf}  (Hooley \cite{Hool}, Greaves \cite{Greaves}
) Let $f$ be a primitive squarefree polynomial with integer coefficients in $h$ variables ($h=1$ or $2$). Suppose that every irreducible factors of $f$ has degree less or equal to $3h$. 
Then $f$ verifies the Squarefree conjecture.
\end{thm}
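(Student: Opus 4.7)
The plan is to prove the theorem via a standard Möbius-sieve decomposition, combined with a careful treatment of the contribution from large divisors, where the degree hypothesis plays its decisive role. First, write
\[ Sqf(X) = \sum_{\vv \in \bZ^h(X)} \sum_{d^2 \mid f(\vv)/d_f} \mu(d) = \sum_{d \geq 1} \mu(d)\, N(d,X), \]
where $N(d,X) := \#\{\vv \in \bZ^h(X) : d^2 \mid f(\vv)/d_f\}$. I would split this sum at a threshold $Y = Y(X) \to \infty$ taken as a small power of $X$. For $d \leq Y$, the Chinese Remainder Theorem gives $N(d,X) = (2X)^h \rho_f(d^2) / d^{2h} + O(\rho_f(d^2))$, where $\rho_f(m)$ counts solutions to $f(\vv) \equiv 0 \pmod{m}$. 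The multiplicativity of $\rho_f$ on coprime inputs makes this partial sum converge to $(2X)^h$ times the Euler product defining $C_f$, with an admissible error provided $Y$ is chosen appropriately.

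The crux of the argument is to establish that the tail
\[ T(X) := \#\{\vv \in \bZ^h(X) : \exists \text{ prime } p > Y,\ p^2 \mid f(\vv)\} = o(X^h). \]
By the equivalent reformulation (Conjecture \ref{sfc2}), it suffices to treat $p > X^{h/2}$. Here I would decompose $f = \prod_i g_i$ into irreducible factors and handle each one separately. A pair $(\vv, p)$ with $p^2 \mid g_i(\vv)$ and $p > X^{h/2}$ corresponds, via the relation $g_i(\vv) = p^2 m$, to an integral point on the auxiliary hypersurface $y^2 = g_i(\vv)$. Cross contributions, where a prime $p$ simultaneously divides two distinct factors $g_i(\vv)$ and $g_j(\vv)$, are controlled by noting that such $p$ must divide $\Res(g_i,g_j)$ or a suitable two-variable analogue, which cuts out a set of strictly lower dimension and contributes $O(X^{h-1})$.

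The hard part, therefore, is bounding lattice points on the auxiliary hypersurface $y^2 = g(\vv)$, and this is exactly where the degree restriction $\deg g \leq 3h$ enters. When $h = 1$ and $\deg g \leq 3$, this is a curve of arithmetic genus at most one; Hooley's classical counting argument for integral points under that genus constraint yields the required $o(X)$ large-prime contribution. When $h = 2$ and $\deg g \leq 6$, Greaves's refinement via a determinant method for integer points on surfaces of small degree delivers the corresponding $o(X^2)$ bound. Beyond these thresholds the technique breaks down (the auxiliary variety becomes of general type with too many lattice points under standard conjectures), which reflects why the Squarefree conjecture remains open in higher degree. Combining the small-, medium-, and large-divisor ranges then produces the asymptotic~(\ref{nbdesqf2}).
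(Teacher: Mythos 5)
The paper does not actually prove Theorem \ref{theosqf}: it is stated as a known result and delegated entirely to Hooley \cite{Hool} and Greaves \cite{Greaves}, so there is no internal proof to compare against. Your sieve skeleton is the correct and standard one, and it is consistent with the surrounding material: the Möbius expansion and the small-divisor main term recover the Euler product $C_f$, the medium range $Y<p\leq X^{h/2}$ is disposed of by exactly the unconditional estimate of Proposition \ref{1.13} summed over $p$, and the reduction of the whole problem to primes $p>X^{h/2}$ is precisely the equivalence between Conjectures \ref{conjsfc3} and \ref{sfc2}. Up to that point the proposal is fine.

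The gap is in the decisive step, the large primes, which is where the entire content of Hooley's and Greaves's theorems lives. Your mechanism --- ``$g_i(\vv)=p^2m$ gives an integral point on $y^2=g_i(\vv)$, and genus considerations bound such points'' --- does not work as stated. First, the point lies on the twisted curve $my^2=g_i(\vv)$, not on $y^2=g_i(\vv)$, and $m$ is not fixed: in the critical range $p\gg X/(\log X)^2$ (for $h=1$, $\deg g=3$) the parameter $m\ll X^3/p^2$ still runs over roughly $X$ values. A per-curve bound on integral points, even one as strong as $O(X^{\epsilon})$ uniformly in $m$, therefore only yields $O(X^{1+\epsilon})$ after summing over $m$, which is not $o(X)$; and no such uniform bound is available elementarily anyway. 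Hooley's actual argument does not count curve by curve --- it exploits the joint structure of all pairs $(\vv,p)$ at once (via the arithmetic of the field $\Q[T]/(g)$, respectively a differencing/pigeonhole argument on nearby solutions), and Greaves's treatment of sextic binary forms is likewise not the Bombieri--Pila-style determinant method you invoke, which postdates the structure of his proof. Since this is exactly the step you defer to the references while simultaneously offering an incorrect account of how it goes, the proposal should either genuinely reproduce the Hooley/Greaves estimate for the range $p>X^{h/2}$ or cite it as a black box without the misleading ``genus'' justification; as written, the hard part of the theorem is not proved.
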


\medskip

Note also that we have unconditionally the following estimate:

\begin{propo}\label{1.13}
Let $p$ be a prime number and let $f$ be a primitive squarefree polynomial with integer coefficients in $h$ variables ($h=1$ or $2$). We have

\begin{equation}\label{borne2}\sharp\left\{\vv\in \bZ^h(X)\;|\; p^2\text{ divides } f(\vv)\right\}\ll \frac{X^h}{p^2}+X^{h-1}.
\end{equation}
Moreover, the majoration implicit in this formula depends only of $f$, and not of $p$.
\end{propo}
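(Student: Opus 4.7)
The plan is to reduce the count to a residue count modulo $p^2$ and to track carefully the dependence on the size of $p$ relative to $X$. For $h=1$, the key input is that $t_f(p)$, the number of residues $v\bmod p^2$ with $p^2\mid f(v)$, is bounded uniformly in $p$: if $p$ is coprime to $\operatorname{disc}(f)$ and to the leading coefficient of $f$ (a finite exceptional set depending only on $f$), Hensel's lemma lifts every root of $f$ modulo $p$ uniquely to a root modulo $p^2$, so $t_f(p)\leq\deg f$; for the finitely many exceptional primes any crude bound is absorbed into the implicit constant. A box-counting argument then yields
\[
\sharp\{v\in\bZ^1(X):p^2\mid f(v)\}\ \leq\ t_f(p)\bigl(\tfrac{2X}{p^2}+1\bigr)\ \ll\ \frac{X}{p^2}+1,
\]
which is the sought-after estimate.

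For $h=2$ the approach is to slice by one variable. After a unimodular change of coordinates depending only on $f$ (choose $c\in\bZ$ with $f(1,c)\neq 0$ and replace $(u,v)$ by $(u,v+cu)$), one may assume $f(1,0)\neq 0$, so that for every $v$ the univariate polynomial $g_v(u):=f(u,v)\in\bZ[u]$ has exact degree $d$ with nonzero leading coefficient $a:=f(1,0)$ independent of $v$. Let $D(v):=\operatorname{disc}_u g_v\in\bZ[v]$; this is a nonzero polynomial of degree at most $d(d-1)$ since $f$ is squarefree. For any prime $p$ coprime to $a$ and to the leading coefficient of $D$ (a finite exceptional set depending only on $f$), I split $v\in[-X,X]$ into \emph{good} slices, where $p\nmid D(v)$ and $g_v$ is squarefree modulo $p$, and \emph{bad} slices, where $p\mid D(v)$. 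For a good $v$ the $h=1$ argument applies uniformly and produces $O(X/p^2+1)$ admissible values of $u$; summing over the at most $2X+1$ good slices gives a contribution of $O(X^2/p^2+X)$.

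Since $p\nmid\operatorname{lc}(D)$, the bad $v$'s lie in at most $\deg D$ residue classes modulo $p$, hence number at most $O(X/p+1)$. For each such $v$, the polynomial $g_v$ has at most $d$ roots modulo $p$; an elementary lifting computation (each multiple root contributes either $0$ or $p$ lifts modulo $p^2$, depending on whether $p^2$ divides $g_v$ there, while every simple root lifts uniquely) shows that $t_{g_v}(p)\leq dp$, and hence the number of admissible $u$ per bad slice is at most $\min\bigl(dp(\tfrac{2X}{p^2}+1),\, 2X+1\bigr)$. The hard part will be the range-dependent case analysis: for $p\leq X^{1/2}$ the first term is $O(X/p)$ and matches the $O(X/p)$ bad slices to contribute $O(X^2/p^2)$; for $X^{1/2}<p\leq X$ it is $O(p)$ and the $O(X/p)$ bad slices contribute $O(X)$; and for $p>X$ the trivial bound $2X+1$ takes over while only $O(1)$ bad slices survive, again giving $O(X)$. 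The combination of the bound $t_{g_v}(p)\leq dp$ with the trivial bound $2X+1$ is what closes the gap near $p\sim X$, where neither estimate alone is strong enough. For the finitely many exceptional primes excluded at the outset (bounded in terms of $f$), the trivial estimate $(2X+1)^2\ll_f X^2/p^2+X$ suffices, completing the proof.
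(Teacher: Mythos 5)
Your argument is correct. For $h=1$ it coincides with the paper's proof: count residues modulo $p^2$ over boxes of side $p^2$, with $t_f(p)=O(1)$ uniformly in $p$ via Hensel lifting away from the finitely many primes dividing the discriminant and leading coefficient. For $h=2$ you take a genuinely different route. The paper observes that the crude box count $t_f(p)(2X/p^2+O(1))^2$ with $t_f(p)=O(p^2)$ already suffices when $p^2\leq X$, and for larger $p$ invokes Greaves' decomposition of the solution set of $f(a,b)\equiv 0\bmod p^2$ into the sublattice $p\bZ^2$ and boundedly many lattices $L_\omega=\{(a,b): a\equiv\omega b\bmod p^2\}$ of index $p^2$, each meeting the box in $O(X^2/p^2+X)$ points (the uniform bound on the number of admissible $\omega$ being essentially the $h=1$ statement again). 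You instead fiber over the second variable: after a shear making the leading $u$-coefficient a nonzero constant, the discriminant $D(v)\not\equiv 0$ (here is where squarefreeness of $f$ enters) isolates $O(X/p+1)$ bad slices; the good slices are disposed of by the uniform one-variable bound, and the bad slices by combining $t_{g_v}(p)\leq dp$ with the trivial bound $2X+1$ through a three-range case analysis in $p$. I checked the ranges: the bad slices contribute $O(X^2/p^2)$ for $p\leq X^{1/2}$ and $O(X)$ for $p>X^{1/2}$, so the total is indeed $\ll X^2/p^2+X$ with constants depending only on $f$. The trade-off is that the paper's lattice-point count avoids both the discriminant and the case analysis, while your fibering is self-contained, makes the uniformity in $p$ completely explicit, and parallels the good/bad reduction dichotomy used elsewhere in the paper; one small point worth recording is that the shear distorts the box $\bZ^2(X)$ into a parallelogram contained in $\bZ^2((1+|c|)X)$, which only affects the implicit constant since $c$ depends only on $f$.
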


\begin{proof}
Let $I$ be an integer interval of length $p^2$, i.e. $I=[M+1,M+p^2]$, or a product of at most two of such intervals, i.e. $I=[M+1,M+p^2]\times[M'+1,M'+p^2]$. We naturally have
\[\sharp\left\{\vv\in I\;|\; p^2\text{ divides } f(\vv)\right\}=t_f({p}).\]
This is $O(1)$ when $h=1$ and $O(p^2)$ when $h=2$. Taking a sum of $N$ such interval allows to obtain:
\[\sharp\left\{\vv\in \bZ^h(Np^2)\;|\; p^2\text{ divides } f(\vv)\right\}=t_f({p})(2N)^h.\]
Since $X\leq p^2(\lfloor\frac{X}{p^2}\rfloor+1)$, we have 
\begin{equation}\label{borne1}\sharp\left\{\vv\in \bZ^h(X)\;|\; p^2\text{ divides } f(\vv)\right\}\leq t_f({p})\left(2\frac{X}{p^2}+O(1)\right)^h
\end{equation}
For $h=1$, this estimation suffices. For $h=2$, first remark that the estimation gives $p^2(\frac{X^2}{p^4}+\frac{X}{p^2}+1)$, so the proposition is shown for primes such that $p^2\leq X$. For the big $p$, one can introduce the following elementary refinement  (see \cite[Lemma 1]{Greaves}). 

Put $Z_p=\{\omega\mod p^2 \mid f(\omega,1)\equiv 0\mod p^2\}$.
Then the set of solutions of $f(a,b)\equiv 0\mod p^2$ can be divided up into the set $L_{0}=p\bZ^2$ and the arithmetic progressions $L_{\omega}=\{\vv=(a,b)\in\bZ^2\;|\; a\equiv \omega b\mod p^2\}$ with $\omega\in Z_p$. We trivially have \[\sharp\{\vv\in L_0 \ \vert \ \vert\vv\vert\leq X\}=O\left(\frac{X^2}{p^2}\right).\]
Since the lattices $L_{\omega}$ have index $p^2$ in $\bZ$, we have \begin{equation}\label{poulet}\sharp\{\vv\in L_\omega\ \vert \ \vert\vv\vert\leq X \}=O\left(\frac{X^2}{p^2}+X\right).\end{equation}
Indeed, since we have $\vv=(\omega u+p^2v,v)$ with $(u,v)\in\Z^2$, the conditions can be written as $\vert v\vert\leq X$ and $\vert \omega u+p^2v\vert\leq X$. For a given $v\in[-X,X]$ we have thus $u\in[\frac{-\omega v}{p^2}-\frac{X}{p^2},\frac{-\omega v}{p^2}+\frac{X}{p^2}]$ thus $\frac{2X}{p^2}+O(1)$ solutions. We obtain equation \ref{poulet} by summing over $v\in[-X,X]$.
\end{proof}


The Squarefree conjecture leads to a sieve allowing to find infinitely many values of a polynomial whose square factors part is a given constant. We use the following version of the sieve, introduced by V\'arilly-Alvarado in \cite{VA}.

\begin{coro}{\cite[Corollary 5.8]{VA}}\label{crible}
Let $f(U,V)\in\Z[U,V]$ be an homogeneous polynomial in two variables of degree $d$. Assume that no square of a nonunit in $\Z[U,V]$ divides $f(U,V)$, and that no irreducible factor of $f$ has degree greater than $6$. Fix
\begin{itemize}
\item a sequence $S=(p_1,\dots,p_s)$ of distinct prime numbers and
\item a sequence $T=(t_1,\dots,t_s)$ of nonnegative integers.
\end{itemize}
Let $N$ be an integer such that $p^2\mid N$ for all primes $p<\deg f$ and $p_1^{t_1+1}\dots p_s^{t_s+1}\mid N$. Suppose that there exist integers $a$, $b$ such that 

\[f(a,b)\not\equiv0\mod p^2\text{, whenever }p\mid N \text{ and }p\not=p_i\text{ for any }i,\]
 and such that
\[v_{p_i}(f(a,b))=t_i\text{ for every }i=1,\dots,s.\]
Then there are infinitely many pairs of integers $(u,v)$ such that
\[u\equiv a\mod N, v\equiv b\mod N,\]
and
\[f(u,v)=p_1^{t_1}\dots p_s^{t_s}\cdot l,\]
where $l$ is squarefree and $v_{p_i}(l)=0$ for all $i=1,\cdots,s$.
\end{coro}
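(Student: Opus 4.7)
The strategy is to reduce the statement to the Squarefree Conjecture applied on an arithmetic progression. Under the degree hypothesis ``every irreducible factor of $f$ has degree at most $6 = 3h$ for $h=2$'', that conjecture is actually a theorem, supplied by Theorem \ref{theosqf} of Hooley and Greaves.

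First, I would analyze $f(U,V)$ along the arithmetic progression $\mathcal{A} := (a,b) + N\mathbb{Z}^2$. Writing $(U,V) = (a+Nu, b+Nv)$ with $(u,v)\in\mathbb{Z}^2$, since $f$ has integer coefficients every term of $f(U,V)-f(a,b)$ carries a factor of $N$; more generally, whenever $p^k \mid N$ one has $f(U,V) \equiv f(a,b) \pmod{p^k}$. Taking $p = p_i$ and $k = t_i+1$ this forces $v_{p_i}(f(U,V)) = t_i$ for every $(U,V)\in\mathcal{A}$; taking $p < \deg f$ with $p \neq p_i$ (so $p^2 \mid N$) and combining with the hypothesis $f(a,b)\not\equiv 0\pmod{p^2}$, it forces $p^2 \nmid f(U,V)$. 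Consequently, on $\mathcal{A}$ the only primes whose square could still divide $f(U,V)$ are those coprime to $N$ and in particular distinct from the $p_i$.

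Second, I would apply the Squarefree Conjecture on an arithmetic progression (Conjecture \ref{sfc1}), available unconditionally by Theorem \ref{theosqf}: the number of $(U,V)\in \mathcal{A}(X)$ such that $f(U,V)/d_{f,\mathcal{A}}$ is squarefree is $C_{f,\mathcal{A}} X^2 + o(X^2)$. The bookkeeping of the first step identifies $d_{f,\mathcal{A}}$ with $\prod_i p_i^{t_i}$ up to a squarefree factor, so the condition ``$f(U,V)/d_{f,\mathcal{A}}$ squarefree'' becomes exactly the desired factorization $f(U,V) = p_1^{t_1}\cdots p_s^{t_s}\cdot l$ with $l$ squarefree and coprime to every $p_i$. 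It then remains to check that the sieve constant $C_{f,\mathcal{A}} = \frac{1}{N^2}\prod_{p\nmid N}\bigl(1 - t_f(p)/p^{2(2+\nu_p)}\bigr)$ is strictly positive: each local factor lies in $(0,1)$ because $f$, being primitive and squarefree, cannot vanish identically modulo $p^2$, and convergence of the Euler product is the statement following the definition of $C_f$ in the text. Hence the main term grows like a positive constant times $X^2$, producing infinitely many valid pairs $(u,v)$ with $u \equiv a \pmod N$ and $v \equiv b \pmod N$.

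The main obstacle is the bookkeeping hidden in the second step: matching the abstract sieve output ``$f(U,V)/d_{f,\mathcal{A}}$ is squarefree'' with the concrete prescribed multiplicative shape. This requires separating three regimes of primes cleanly: the small primes $p < \deg f$ (handled by the condition $p^2\mid N$ and the hypothesis on $f(a,b)$), the distinguished primes $p_i$ (pinned to valuation exactly $t_i$ by $v_{p_i}(f(a,b))=t_i$ together with $p_i^{t_i+1}\mid N$), and the remaining primes $p\nmid N$ (left entirely to the sieve). Once this separation is in place the argument becomes essentially formal, and the quantitative lower bound from Theorem \ref{theosqf} delivers infinitely many representatives, as claimed.
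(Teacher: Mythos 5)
The paper does not prove this corollary: it is imported verbatim from \cite[Corollary 5.8]{VA}, so there is no internal argument to compare against. Your overall strategy --- restrict to the lattice $(a,b)+N\Z^2$, let the congruences pin the valuations at primes dividing $N$, and let the unconditional Hooley--Greaves squarefree sieve (Theorem \ref{theosqf}, in the form of Conjecture \ref{sfc1}) handle the remaining primes, checking positivity of the singular series --- is exactly the intended route and is essentially V\'arilly-Alvarado's proof.

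There is, however, a genuine gap in your first step. You claim that after imposing $(u,v)\equiv(a,b)\bmod N$, ``the only primes whose square could still divide $f(u,v)$ are those coprime to $N$.'' The congruence only yields $f(u,v)\equiv f(a,b)\bmod p^{\nu_p(N)}$, so this deduction is valid precisely when $p^2\mid N$. The hypotheses guarantee $p^2\mid N$ only for $p<\deg f$ and for the $p_i$ with $t_i\geq 1$; they explicitly allow prime factors $p\geq\deg f$ of $N$ with $\nu_p(N)=1$, and for such a $p$ with $p\mid f(a,b)$ but $p^2\nmid f(a,b)$ the congruence gives no control on divisibility by $p^2$. Concretely, take $f(U,V)=UV$, $s=0$, $N=5$, $(a,b)=(5,1)$: all hypotheses hold, every $(u,v)$ in the progression satisfies $5\mid f(u,v)$, and a positive proportion satisfy $25\mid f(u,v)$, so no congruence argument discards them. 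These primes must also be fed into the sieve: one should count $(u,v)\in\cA(X)$ with $p^2\nmid f(u,v)$ for every $p$ outside $\{p_1,\dots,p_s\}\cup\{p:\ p^2\mid N\}$, and verify that each exceptional $p\,\Vert\, N$ contributes a local density strictly less than $1$ --- which is exactly where the hypothesis $f(a,b)\not\equiv 0\bmod p^2$ is really used: writing $(u,v)=(a+ps,b+pt)$, the condition $p^2\mid f(u,v)$ becomes a nontrivial linear (or empty) condition on $(s,t)\bmod p$, of density at most $1/p$. The same oversight infects your bookkeeping with $d_{f,\cA}$: that quantity picks up precisely these primes $p\,\Vert\,N$ dividing $f(a,b)$ (and possibly primes dividing $\delta_f$), so ``$f(u,v)/d_{f,\cA}$ squarefree'' still permits $v_p(f(u,v))=2$ at such a $p$ and is strictly weaker than the asserted shape $f(u,v)=p_1^{t_1}\cdots p_s^{t_s}\cdot l$ with $l$ squarefree. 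Once the sieve is set up to exclude square divisibility at every prime outside $\{p_i\}\cup\{p:\ p^2\mid N\}$ rather than only at $p\nmid N$, the rest of your argument goes through.
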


We will refine this sieve in Corollary \ref{sflcrible} by imposing further conditions on the values $f(u,v)$ and an auxiliary polynomial $g(u,v)$.

\subsection{Chowla's conjecture}

The second conjecture studies the proportion of the values $f(\vv)$ with a certain parity of the number of prime factors. Recall the definition of Liouville's function.

\begin{definition} For a non-zero integer $n=\prod_pp^{\nu_p(n)}$, we denote by $\Omega(n)=\sum_p\nu_p(n)$ the number of its prime factors and we define {\em Liouville's function} by the formula
\[\lambda(n)=(-1)^{\Omega(n)}.\]
\end{definition}

\begin{rem} Remember that Moebius' function is defined as \[\mu(n)=\begin{cases}
1& \text{if $n$ has an even number of distinct prime factors}\\
0 & \text{if $n$ has a square factor}\\
-1& \text{if $n$ has an odd number of distinct prime factors}
\end{cases}\]

Liouville's function resembles Moebius' function, but differs in the presence of a square factor. More precisely, the relation is the following: $\mu(n)=\lambda(n)$ if $n$ is squarefree and $\mu(n)=0$ if there exists $p^2$ dividing $n$.
\end{rem}

\begin{conjecture} (Chowla's conjecture) Let $f$ be a primitive squarefree polynomial with integer coefficients. The following estimation holds for every arithmetic progression $\cA$:
\begin{equation}
\sum_{\vv\in\cA(X)}\lambda(f(\vv))=o(X^2)\end{equation}
\end{conjecture}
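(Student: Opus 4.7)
Chowla's conjecture as stated is, for $f$ of degree $\geq 2$, a famous open problem, so no complete proof is available; what one can honestly plan is a proof of the known cases together with the strategy by which one would attempt the rest.

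The plan for the linear case ($h=1$, $\deg f=1$) is classical. After absorbing the arithmetic progression $\cA$ into the coefficients, one reduces to estimating $\sum_{n\leq X}\lambda(an+b)$. By Möbius inversion one writes $\lambda = \mu \ast \mathbf{1}_{\square}$, i.e.\ $\lambda(n) = \sum_{d^2 \mid n} \mu(n/d^2)$, and separates the contribution of small and large $d$. The small-$d$ piece reduces to $\sum_{m} \mu(am+b')$ over arithmetic progressions, which is $o(X)$ by the prime number theorem in arithmetic progressions (equivalently, non-vanishing of $L(1,\chi)$). The large-$d$ tail is controlled by Proposition \ref{1.13}. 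An analogous argument handles the case $h=2$, $f$ a product of linear forms, where one fixes one variable and applies the one-variable result uniformly (this is essentially the route used by Helfgott and matches Hypothesis \ref{hyphelf4} of the main theorem).

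For $\deg f = 2$ with $h=1$, the plan is to exploit the fact that $\lambda(f(n))$ is closely related to a Hecke character of the quadratic field generated by the roots of $f$. One then recognises $\sum_{n}\lambda(f(n))$ as a sum of Fourier coefficients of a modular form of weight one, and invokes Deligne-Serre or the analytic properties of the associated $L$-function to obtain cancellation. A similar program, using automorphic $L$-functions attached to $\mathrm{GL}_d$, is in principle available for higher degree but is out of reach with current technology.

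The main obstacle for general $f$ is the \emph{parity problem}: sieve methods intrinsically cannot distinguish numbers with an even from an odd number of prime factors, so they cannot, by themselves, give cancellation in sums of $\lambda(f(\vv))$. Bypassing this requires either deep input from automorphic forms (available only for very special $f$, essentially related to Galois representations one can control) or the Matomäki--Radziwi\l{}\l{}--Tao theory of averaged Chowla-type conjectures, which yields $\sum_{n\leq X}\lambda(f(n))=o(X)$ only on average over short intervals or over families of polynomials, not for an individual $f$ of degree $\geq 3$. Accordingly, my plan would be to prove the conjecture unconditionally only in the cases covered by Hypothesis \ref{hyphelf4} (linear factors in one or two variables, plus low-degree cases accessible via Hecke theory), and otherwise to use the conjecture as a black box, exactly as the author does in the remainder of the paper.
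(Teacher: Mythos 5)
This statement is a conjecture, not a theorem: the paper offers no proof of it, only the separate Theorem \ref{theochowla} listing the known special cases ($h=1$, $d=1$; $h=2$, $d\leq 3$ by Helfgott and Lachand; products of linear forms by Green--Tao), and otherwise uses the conjecture as a hypothesis. Your proposal correctly identifies this and your survey of the provable cases and of the parity obstruction matches the paper's treatment, so there is nothing to fix.
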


\medskip

The known results up to now are the following:

\begin{thm}\label{theochowla}  Let $f$ be a primitive squarefree polynomial of degree $d$ with integer coefficients, in $h$ variables ($h=1$ or $2$). Chowla's conjecture holds in the following cases:
\begin{enumerate}
\item (Hadamard -- de la Vall\'ee Poussin) $h=1$ and $d=1$.
\item (Helfgott \cite{HelfChowla}, Lachand \cite{Lachandthese}\nocite{Lachand1}) $h=2$ and $d\leq 3$.
\item (Green-Tao \cite{GT}) $h=2$ and $f$ is a product of linear forms.
\end{enumerate}
\end{thm}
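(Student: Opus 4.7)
The plan is to handle each of the three cases independently. For \emph{case (1)}, with $f(t) = \alpha t + \beta$ linear in one variable, the substitution $m = \alpha t + \beta$ transforms $\sum_{t \in \cA(X)} \lambda(f(t))$ into $\sum_{m \le Y,\, m \equiv r \pmod{q}} \lambda(m)$ for suitable $q,r,Y$. Expanding the characteristic function of the progression as a combination of Dirichlet characters modulo $q$ reduces the problem to bounding $\sum_{m \le Y} \chi(m) \lambda(m) = o(Y)$ for each character $\chi$. Via the identity $\lambda(n) = \sum_{d^2 \mid n} \mu(n/d^2)$ (which follows from $\sum_n \lambda(n) n^{-s} = \zeta(2s)/\zeta(s)$), this is a standard consequence of the classical prime number theorem for $L(s,\chi)$, due to Hadamard and de la Vall\'ee Poussin.

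For \emph{case (2)} we invoke directly the results of Helfgott \cite{HelfChowla} and Lachand \cite{Lachandthese,Lachand1}, who establish $\sum_{\vv \in \cA(X)} \lambda(f(\vv)) = o(X^2)$ for every primitive squarefree $f \in \bZ[U,V]$ of degree at most three. Their proofs rest on a Heath--Brown style decomposition of $\lambda$ into bilinear (Type I / Type II) sums over the values of $f$, combined with equidistribution estimates for those values modulo primes; no further reduction is required on our side.

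For \emph{case (3)}, when $f = \prod_i L_i$ is a product of linear forms $L_i \in \bZ[U,V]$, complete multiplicativity of $\lambda$ gives $\lambda(f(\vv)) = \prod_i \lambda(L_i(\vv))$. Using the identity $\lambda(n) = \sum_{d^2 \mid n} \mu(n/d^2)$ term by term and expanding the product, the contribution of the indices $(d_1, \ldots, d_k)$ with some $d_i > 1$ is bounded by the unconditional Proposition \ref{1.13} (after a dyadic decomposition in $\max d_i$ and provided the cut-off is chosen $o$ of a small power of $X$). The problem is thereby reduced to estimating $\sum_{\vv \in \cA(X)} \prod_i \mu(L_i(\vv))$, and this last sum is $o(X^2)$ by the Green--Tao theorem on linear equations in primes \cite{GT} in the form involving the M\"obius function and the Gowers uniformity norms.

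The genuine obstacle is contained entirely in the cited works: for (2) with $d=3$ the bilinear estimates for $\lambda$ on binary cubic values demand substantial exponential-sum machinery (vastly more than is needed for the linear or quadratic sub-cases), while in (3) the control of $\prod_i \mu(L_i(\vv))$ by Gowers norms is the deep content of Green and Tao. On our side, the only non-trivial manipulations are the passage from $\lambda$ to $\mu$ (controlled, when necessary, by Proposition \ref{1.13}) and the character-sum reduction of case (1) to the classical prime number theorem in arithmetic progressions.
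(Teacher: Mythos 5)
The paper gives no proof of this statement at all: Theorem \ref{theochowla} is a catalogue of results quoted from the literature (Hadamard--de la Vall\'ee Poussin for $h=d=1$, Helfgott and Lachand for binary forms of degree $\leq 3$, Green--Tao for products of linear forms), so everything you write beyond the citations is supplementary. Your reductions for cases (1) and (2) are fine: the character decomposition plus $\sum_{n}\lambda(n)n^{-s}=\zeta(2s)/\zeta(s)$ is the standard way to deduce $\sum_{m\le Y,\ m\equiv r (q)}\lambda(m)=o(Y)$ from the prime number theorem for $L(s,\chi)$, and case (2) is a direct invocation of the cited works.

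In case (3), however, the passage from $\lambda$ to $\mu$ does not close as written. If you bound \emph{every} term with some $d_i>1$ by the counting estimate of Proposition \ref{1.13} (ignoring the sign of $\mu$), the contribution of a single linear form already gives $\sum_{d\ge 2}\bigl(X^2/d^2+X\bigr)\asymp X^2$, which is not $o(X^2)$; no choice of cut-off rescues a purely trivial bound on all of $d_i\ge 2$. The correct argument truncates at $d_i\le D$: for each fixed tuple with $\max d_i\le D$ the condition $d_i^2\mid L_i(\vv)$ restricts $\vv$ to finitely many sublattices on which $L_i(\vv)/d_i^2$ is again a linear form, and one applies the Green--Tao orthogonality there to get $o(X^2)$ for each fixed tuple; only the tail $\max d_i>D$ is bounded trivially, contributing $\ll X^2/D+X^{3/2+o(1)}$, and one then lets $D\to\infty$. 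Alternatively, and more simply, the Green--Tao machinery (generalized von Neumann theorem plus the M\"obius/nilsequences orthogonality) applies verbatim to $\lambda$ in place of $\mu$, so no reduction between the two functions is needed. Either repair is routine, but the step as you stated it is not valid.
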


\subsection{Combination of the conjectures}

In the course of the proof of Theorem \ref{nouveauthm}, we will use the following analytical result which does not seem to appear in the literature.



\begin{thm}\label{conjecturesfacteurs} Fix an element $\epsilon\in\{1,-1\}$. Let $f,g\in\Z[U,V]$ be two squarefree primitive homogeneous polynomials with no common factor. Assume the Squarefree conjecture hold for $f$ and $g$ and that Chowla's conjecture hold for $f$. Then for any arithmetic progression $\cA=N\Z^2+(a,b)$ (where $a$, $b$, $N\not=0$  are integers such that $a$ and $b$ are coprime to $N$), define $T(X)$ to be the number of pairs of integers $(u,v)\in\cA(X)$ such that
\begin{enumerate}
\item $\frac{f(u,v)g(u,v)}{d_{fg}}$ is not divisible by $p^2$ for any prime $p$ such that $p\nmid N$;
\item $\lambda(f(u,v))=\epsilon$
\end{enumerate}

Then
\begin{equation}
T(X)=\frac{C_{fg,\cA}}{2}X^2+o\left(
X^2\right).
\end{equation}
\end{thm}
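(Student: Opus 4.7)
The plan is to use the indicator identity $\mathbf{1}\{\lambda(f(u,v))=\epsilon\}=\tfrac12(1+\epsilon\,\lambda(f(u,v)))$, which splits
\[
T(X) \;=\; \tfrac12|S(X)| \;+\; \tfrac{\epsilon}{2}\Sigma(X),
\]
where $S(X)\subseteq\cA(X)$ denotes the set satisfying condition (1) and $\Sigma(X):=\sum_{(u,v)\in S(X)}\lambda(f(u,v))$. Since $f,g$ are squarefree, primitive, and coprime, their product $fg$ is also squarefree and primitive, and the Squarefree conjecture for $fg$ on $\cA$ (which I would deduce from the Squarefree conjectures assumed for $f$ and $g$ individually, by absorbing into the modulus $N$ the finite set of primes dividing $\Res(f,g)$, the only ones that can divide both $f(u,v)$ and $g(u,v)$) gives $|S(X)|=C_{fg,\cA}X^2+o(X^2)$. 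It therefore remains to show $\Sigma(X)=o(X^2)$.

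Write $F:=fg/d_{fg}$ and fix a parameter $K\geq 1$ which will be sent to infinity only at the very end. I introduce the truncated Möbius sieve
\[
\chi_K(u,v) \;:=\; \prod_{\substack{p\leq K \\ p\nmid N}}\bigl(1-\mathbf{1}\{p^2\mid F(u,v)\}\bigr) \;=\; \sum_{\substack{d\mid P_K \\ \gcd(d,N)=1}}\mu(d)\,\mathbf{1}\{d^2\mid F(u,v)\},
\]
with $P_K:=\prod_{p\leq K,\,p\nmid N}p$. One has $\chi_K\geq\mathbf{1}_{S(X)}$ pointwise, and the difference is supported on those $(u,v)\in\cA(X)$ admitting no ``bad prime'' in $[2,K]$ yet some $p>K$ with $p^2\mid F(u,v)$. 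This yields the decomposition $\Sigma(X)=A_K(X)-B_K(X)$, where
\begin{align*}
A_K(X) &:= \sum_{(u,v)\in\cA(X)}\chi_K(u,v)\,\lambda(f(u,v)), \\
B_K(X) &:= \sum_{(u,v)\in\cA(X)}\bigl(\chi_K-\mathbf{1}_{S(X)}\bigr)(u,v)\,\lambda(f(u,v)).
\end{align*}

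For $A_K(X)$, the Möbius expansion produces a finite sum of length $2^{\pi(K)}$ (bounded in terms of $K$) of inner sums $\mu(d)\sum_{(u,v)\in\cA_d(X)}\lambda(f(u,v))$, where $\cA_d\subseteq\cA$ is the sublattice defined by $d^2\mid F(u,v)$. Using the Chinese remainder theorem and the analysis of congruence solutions underlying Proposition \ref{1.13}, each $\cA_d$ decomposes into a finite union of honest arithmetic progressions of moduli dividing $Nd^2$, with the coprimality between shift and modulus required by the paper's statement of Chowla's conjecture. Applying Chowla's conjecture for $f$ to each of these progressions yields $A_K(X)=o_K(X^2)$ as $X\to\infty$ for every fixed $K$.

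For $B_K(X)$, the triangle inequality bounds $|B_K(X)|$ by the number of $(u,v)\in\cA(X)$ admitting some prime $p>K$, $p\nmid N$, with $p^2\mid F(u,v)$. I would split this range of primes at $p=X$: for $K<p\leq X$, Proposition \ref{1.13} gives the per-prime count $\ll X^2/p^2+X$, which when summed via the prime number theorem yields $O\bigl(X^2/(K\log K)\bigr)+O(X^2/\log X)$; for $p>X$, the equivalent Conjecture \ref{sfc2} applied to $F$ directly supplies $o(X^2)$. Altogether, for each fixed $K$,
\[
\limsup_{X\to\infty}\frac{|\Sigma(X)|}{X^2}\;=\;O\!\left(\frac{1}{K\log K}\right),
\]
and letting $K\to\infty$ forces $\Sigma(X)=o(X^2)$, completing the proof. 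The main obstacle is verifying uniformly that Chowla's conjecture, as stated in the paper, applies to every sublattice $\cA_d$ arising from the congruence $d^2\mid F(u,v)$: this requires rewriting each $\cA_d$ in the shape $\phi_d(\Z^2)$ with the coprimality of shift and modulus holding on the nose, possibly by further decomposition.
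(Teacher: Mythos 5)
Your proposal is correct and follows essentially the same route as the paper: the same identity $\mathbf{1}\{\lambda=\epsilon\}=\tfrac12(1+\epsilon\lambda)$ splitting off a main term handled by the Squarefree conjecture for $fg$, and the same treatment of the Liouville error sum by a three-range split over primes (large primes via the equivalent Conjecture \ref{sfc2}, medium primes via Proposition \ref{1.13}, small primes via inclusion--exclusion combined with Chowla's conjecture on the congruence sublattices where $d^2\mid fg$), followed by the double limit in $X$ and then in your truncation parameter $K$ (the paper's $Z$). Your truncated M\"obius sieve $\chi_K$ is just a compact repackaging of the paper's explicit inclusion--exclusion over the primes below the threshold, and the technical point you flag at the end --- rewriting each sublattice $\cA_d$ as a union of progressions to which the stated form of Chowla's conjecture applies --- is present (and treated with the same brevity) in the paper's proof as well.
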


\begin{rem}
This indicates a sort of independence between the two properties of the values of a polynomial
\begin{itemize}
\item being "squarefree"
\item "parity" of the number of factors.
\end{itemize}
\end{rem}

\begin{rem}\label{thmsurlessecteurs}
Let $\{B_i\}_{0\leq i\leq r}$ be lines in $\R^2$ passing through $(0,0)$, and denote by $S_i$, $0\leq i \leq n+1$, the connected components of $\R^2-\cup_{0\leq n+1}{B_i}$.

If, instead of assuming Chowla's conjecture, one assumes the stronger fact that for $S_i$ and for any $\cA=N\Z^2+(a,b)$ as previously one has
\begin{equation}\label{Chowlasursecteurs}\sum_{v\in\cA(X)\cap S_i}{\lambda(f(u,v))}=o(X^2),\end{equation}
then the same proof as in Theorem \ref{conjecturesfacteurs} allows one to obtain a similar but stronger estimation for the counting function \[T_{S_i}(X)=\{(u,v)\in \cA(X)\cap S_i \ \vert \ \text{(u,v) respects (i) and (ii)}\}\]
precisely:
\begin{equation}
T_{S_i}(X)=\frac{C_{fg,\cA}}{2}X^2+o\left(
X^2\right).
\end{equation}

Indeed, the Squarefree conjecture on arithmetic progressions (Conjecture \ref{sfc1}) on a polynomial $f$ implies the Squarefree conjecture on $\cA(X)\cap S_i$ for any $0\leq i\leq n+1$. Moreover, Chowla's Conjecture on $\cA(X)\cap S_i$ (given by Equation (\ref{Chowlasursecteurs})) for every $0\leq i\leq n+1$ is proven when $deg f\leq 3$ (see \cite{HelfChowla}, \cite{HelfChowla2}) and when $f$ is a finite product of linear polynomial (see \cite{GT}).
\end{rem}

Combining Theorem \ref{conjecturesfacteurs} with Theorem \ref{theosqf} and Theorem \ref{theochowla}, we obtain:
\begin{coro}
Fix $\epsilon\in\{1,-1\}$. Let $f,g$ be squarefree homogeneous polynomial in two variables with integer coefficients. Assume that every factor of $g$ has degree less of equal to 6 and, either that $\deg f\leq 3$, or that $f$ is a product of linear forms. For an arithmetic progression $\cA$, let $T(X)$ be the counting function as defined in Theorem \ref{conjecturesfacteurs}. Then the following estimate holds
\begin{equation}
T(X)=\frac{C_{fg,\cA}}{2}X^2+o(X^2).
\end{equation}
\end{coro}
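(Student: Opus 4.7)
The plan is to verify that the polynomials $f$ and $g$ in the corollary satisfy the hypotheses of Theorem~\ref{conjecturesfacteurs} unconditionally, so that the conclusion of that theorem yields the stated asymptotic directly. Concretely, this amounts to checking the Squarefree conjecture for both $f$ and $g$, together with Chowla's conjecture for $f$, using Theorems~\ref{theosqf} and~\ref{theochowla} as black boxes; the coprimality of $f$ and $g$ and their primitivity are built into the setup of Theorem~\ref{conjecturesfacteurs} to which the corollary appeals.

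First I would apply the squarefree input. We are in the case $h=2$, so Theorem~\ref{theosqf} provides the Squarefree conjecture for any primitive squarefree homogeneous polynomial in two variables whose irreducible factors have degree at most $3h=6$. By hypothesis this bound holds for $g$. For $f$, in both branches of the assumption---either $\deg f \leq 3$, or $f$ is a product of linear forms---every irreducible factor of $f$ has degree at most $6$, so Theorem~\ref{theosqf} again applies and yields the Squarefree conjecture for $f$.

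Next I would invoke Theorem~\ref{theochowla} to obtain Chowla's conjecture for $f$. The case $\deg f \leq 3$ falls under statement~(2) (Helfgott--Lachand), while the case where $f$ is a product of linear forms falls under statement~(3) (Green--Tao). Either way, Chowla's conjecture is known unconditionally for $f$.

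With all the hypotheses of Theorem~\ref{conjecturesfacteurs} verified---primitivity, squarefreeness, no common factor between $f$ and $g$, the Squarefree conjecture for both $f$ and $g$, and Chowla's conjecture for $f$---I would apply that theorem with the chosen sign $\epsilon \in \{1,-1\}$ and the given arithmetic progression $\cA$ to conclude
\[
T(X) = \frac{C_{fg,\cA}}{2}\, X^2 + o(X^2).
\]
There is no genuine obstacle in this corollary: the analytic content has already been exhausted in Theorem~\ref{conjecturesfacteurs}, and the argument here is simply a matter of identifying concrete classes of polynomials for which the required conjectural inputs are now theorems.
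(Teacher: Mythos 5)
Your proof is correct and is exactly the paper's argument: the corollary is stated there as an immediate combination of Theorem \ref{conjecturesfacteurs} with Theorem \ref{theosqf} (factors of degree at most $3h=6$) and Theorem \ref{theochowla} (degree at most $3$, or a product of linear forms), with no further details given. Your write-up simply makes that verification explicit.
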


\begin{proof}[ of Theorem \ref{conjecturesfacteurs}]


Observe that $\mu^2(n)$ is equal to $1$ if $n$ is squarefree and is equal to $0$ otherwise. Observe also that $1+\epsilon\lambda(n)$ is equal to $2$ if $\lambda(n)=\epsilon$ and is equal to $0$ otherwise. We have also for any $n\in\N^*$ that \[\mu^2(n)\lambda(n)=\mu(n).\]


From the properties described in the last paragraph, we deduce:
\begin{align*}
2T(X)&=\!\!\sum_{\vv\in\cA(X)}\!\!\mu^2\left(\frac{g(\vv)f(\vv)}{d_{fg,\cA}}\right)\left(1+\epsilon\lambda\left(\frac{f(\vv)}{d_{f,\cA}}\right)\right)\\
&=
\!\!\sum_{\vv\in\cA(X)}\!\!\mu^2\left(\frac{g(\vv)f(\vv)}{d_{fg,\cA}}\right)
+\epsilon\!\!\sum_{\vv\in\cA(X)}\!\!\mu^2\left(\frac{g(\vv)f(\vv)}{d_{fg,\A}}\right)\lambda\left(\frac{f(\vv)}{d_{f,\cA}}\right)
\end{align*}
equals (say) $S_0(X)+\epsilon S_1(X)$. By the Squarefree conjecture for $f$ and $g$, the first sum is the main term $C_{fg,\cA}X^2+o(X^2)$. We rewrite the second sum
\[S_1(X)=\!\!\sum_{\vv\in A(X)}\!\!\lambda\left(\frac{f(\vv)}{d_{f,\cA}}\right)-\!\!\sum_{\vv\in A(X) \atop \exists p,p^2\mid f(\vv)g(\vv)d_{fg,\cA}^{-1}}\!\!\lambda\left(\frac{f(\vv)}{d_{f,\cA}}\right).\]

The first sum in the right hand side is $o(X^2)$ by Chowla's conjecture for $f$ and there remains to show the following lemma.

\begin{lm} Let $f,g$ be as in Theorem \ref{conjecturesfacteurs}, then
\[\sum_{\vv\in\cA(X);\atop\exists p,p^2\mid f(\vv)g(\vv)d_{fg}^{-1}}\!\!\lambda\left(\frac{f(\vv)}{d_{f,\cA}}\right)=o(X^2)\]
\end{lm}

(To lighten the notations, we suppose that $d_{f,\cA}=d_{fg,\cA}=1$, but it works exactly the same way in other cases.)

Let us denote the sum in the lemma by \[L(X)=\sum_{\vv\in\cA(X);\atop\exists p,p^2\mid f(\vv)g(\vv)}\!\!\lambda\left(f(\vv)\right)\]
We want to show that there exists a constant $c_1$ such that for a given $Z>0$ we have 
$\forall\epsilon>0$, $\exists X_o=X_o(\epsilon,Z)\text{ such that for }X\geq X_o$
\[\left|L(X)\right|\leq \left(\frac{c_1}{Z}+\epsilon\right) X^2.\]

This means that \[\limsup \frac{\vert L(X)\vert}{X^2}\leq \frac{c_1}{Z}+\epsilon.\]

Thus, we have \[\left|L(X)\right|=o(X^2).\]


Fix a (large) real number $Z$. 
We split the sum $F(X)$ in three parts: 

\[L(X)=\sum_{\vv\in \cA(X), \atop \exists p>X, p^2\mid f(\vv)g(\vv)}{\lambda( f(\vv))}+\sum_{\vv\in \cA(X), \atop \exists p\in[Z,X], p^2\mid f(\vv)g(\vv)}{\lambda( f(\vv))}+\sum_{\vv\in \cA(X), \atop \exists p<Z, p^2\mid f(\vv)g(\vv)}{\lambda( f(\vv))},\]
and let us denote the three sums in the right hand side by respectively $L_1(X)$, $L_2(X)$ and $L_3(X)$.
Since $\vert\lambda(n)\vert=1$, we can use the Squarefree conjecture for $fg$\footnote{We use here the (equivalent) statement given by Conjecture \ref{sfc2}}:
\[\vert L_1(X)\vert
\leq \sharp\{\vv\in \cA(X) \ \vert \ \exists p>X, p^2\mid f(\vv)g(\vv)\}=o(X^2),\]
and we use 
 Proposition \ref{1.13} to bound
\[\vert L_2(X)\vert\leq \sharp\{\vv\in \cA(X) \ \vert \ \exists p\in[Z,X], p^2\mid f(\vv)g(\vv)
\}
\ll \!\!\sum_{p\in[Z,X]}\!\! \frac{X^2}{p^2}+X\ll \frac{X^2}{Z}+\frac{X^2}{\log X}.\]

To bound the last sum $L_3(X)$, we want to consider the "small" $p$ (i.e. $p\leq Z$) such that $p^2$ divides $f(\vv)g(\vv)
$. Observe that there are a finite number of them. Put together the $\vv=(u,v)$ according to the congruence sublattices  $L_{p,i}$ defined by $(u,v)\equiv(a_i,b_i)\mod p^2$ (where $(fg)(a_i,b_i)\equiv 0\mod p$)
 and write this sublattice $L_{p.i}=\{\vv=\phi_i(\ww) \ \vert \ \ww\in\bZ^2\}$. 

Write also $f(\phi_i(\ww))=p^2h_{p,i}(\ww)$ where $h_{p,i}\in\Z[U,V]$ is a primitive squarefree homogeneous polynomial. 

When we write Chowla's conjecture for $h_{p,i}$ we obtain
\[\sum_{\vv\in L_{p,i}(X), }\lambda(f(\vv))=\sum_{\ww\in L_{p,i}(X)}\lambda(g_{p,i}(\ww))=o(X^2)\]
and, taking the sum of the finite number of sublattices :
\[\sum_{\vv\in\cA(X), \atop p^2|\,(fg)(\vv)}\lambda(f(\vv))=o(X^2).\]
This means that, for a given $\epsilon$, we have, for $X>X_0(p,\epsilon)$
\[\left|\sum_{\vv\in\cA(X), \atop p^2|\,(fg)(\vv)}\lambda(f(\vv))\right|\leq\epsilon X^2.\]
We can generalise this argument to $q=p_1\dots p_r$ a product of primes and obtain that
for $X>X_0(q,\epsilon)$
\[\left|\sum_{\vv\in\cA(X), \atop q^2|\,(fg)(\vv)}\lambda(f(\vv))\right|\leq\epsilon X^2.\]
We proceed the following way: let us denote $p_1<p_2<\dots <p_r$ the prime numbers $<Z$ and, for each $J\subset[1,r]$ put $q_J:=\prod_{j\in J}p_j$.

Applying the inclusion-exclusion principle, we see that
\[\sum_{\vv\in\cA(X), \atop \exists p<Z, p^2|\,(fg)(\vv)}\lambda(f(\vv))=-\sum_{J\not=\emptyset}(-1)^{|J|}\sum_{\vv\in\cA(X), \atop q_J^2|\,(fg)(\vv)}\lambda(f(\vv)).\]

This gives the inequality:
\[\left\vert\sum_{\vv\in\cA(X), \atop \exists p<Z, p^2|\,(fg)(\vv)}\lambda(f(\vv))\right\vert\leq\sum_{J\not=\emptyset}\left\vert\sum_{\vv\in\cA(X), \atop q_J^2|\,(fg)(\vv)}\lambda(f(\vv))\right\vert.\]

The exterior sum has about $2^{\frac{Z}{\log Z}}$ terms. As for the terms of the interior sum, we just showed that they are such that
\[\left\vert\sum_{\vv\in\cA(X), \atop q_J^2|\,(fg)(\vv)}\lambda(f(\vv))\right\vert\leq \epsilon X^2,\] for $X\geq X_0(p,\epsilon)$.

This allows to conclude that the sum is bounded by $\left(2^{\frac{Z}{\log Z}}\epsilon\right)X^2$, (or simply by $\epsilon X^2$, since $Z$ is a constant), as soon as $X$ is large enough (this "large enough" depending on $\epsilon$ and $Z$).

We showed that there exists a constant $c_1$ such that for given $Z>0$, we have $\forall\epsilon>0,\exists X_o=X_o(\epsilon,Z)\text{ such that for }X\geq X_o$,
\[\left|L(X)\right|\leq (\frac{c_1}{Z}+\epsilon) X^2.\]

This means that \[\limsup \frac{\vert L(X)\vert}{X^2}\leq \frac{c_1}{Z}+\epsilon.\]

Hence \[\left|\sum_{\vv\in\cA(X);\atop \exists p, p^2\mid (fg)(\vv)}\lambda\left(f(\vv)\right)\right|=X^2.\]


\end{proof}

\subsubsection{A "Squarefree-Liouville" Sieve}

\begin{coro}\label{sflcrible}
Let $f(U,V),g(U,V)\in\Z[U,V]$ be homogeneous polynomials in two variables. Assume that they are coprime, that no square of a nonunit in $\Z[U,V]$ divides $f(U,V)$ or $g(U,V)$, that every irreducible factor of $g$ has degree $\leq6$, and  either that $\deg f\leq3$ or that $f$ is a product of linear factors.

Let $R(U,V)\in\Z[U,V]$ be a homogeneous polynomial and $R= \gamma\cdot\prod_{0\leq i\leq r}{R_i}$ it decomposition in primitive factors. Suppose $sgn(\gamma)=+1$.
Fix
\begin{itemize}
\item a sequence $S=(p_1,\dots,p_s)$ of distinct prime numbers and
\item a sequence $T=(t_1,\dots,t_{s},t'_1,\dots,t'_{s})$ of nonnegative integers.
\end{itemize}
Let $N$ be an integer such that $p_1^{t_1+t'_1+1}\dots p_s^{t_s+t'_s+1}\mid N$ and that $p^2\mid N$ for all primes $p<(\deg f+\deg g)$. 

Suppose that there exist integers $a$, $b$ such that 
\begin{enumerate}
\item\label{paszero} $f(a,b)g(a,b)\not\equiv0\mod p^2\text{, whenever }p\mid N \text{ and }p\not=p_i\text{ for any }i,$
\item\label{memesecteur} for every $1\leq i\leq r$, one has $R_i(u,v)> 0$,
\item\label{puissanceok} such that
$v_{p_i}(f(a,b))=t_i$ and $v_{p_i}(g(a,b))=t'_i\text{ for every }i=1,\dots,s$
\item and such that
$\lambda(f(a,b))=\epsilon.$
\end{enumerate}
Then there are infinitely many pairs of integers $(u,v)$ such that
\begin{enumerate}[1.]
\item $u\equiv a\mod N, v\equiv b\mod N,$
\item for every $i$, one has $R_i(u,v)> 0$,
\item such that
$\lambda(f(u,v))=\epsilon,$
\item and such that
$f(u,v)=p_1^{t_1}\dots p_s^{t_s}\cdot l,$ and $g(u,v)=p_1^{t'_1}\dots p_s^{t'_s}l'$
where $l$ and $l'$ are squarefree and $v_{p_i}(l)=v_{p_i}(l')=0$ for all $i=1,\cdots,s$.
\end{enumerate}

\end{coro}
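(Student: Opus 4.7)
The plan is to reduce the corollary to an application of Theorem \ref{conjecturesfacteurs}, more precisely its sector-restricted version from Remark \ref{thmsurlessecteurs}, applied to $f$ and $g$ on the arithmetic progression $\cA := N\Z^2 + (a,b)$ intersected with the open set $S := \{(u,v) \in \R^2 : R_i(u,v) > 0 \text{ for all } i\}$. Each $R_i$ being a primitive homogeneous polynomial, the region $\{R_i > 0\}$ is a union of open sectors of $\R^2$ bounded by lines through the origin, and so is $S$; by hypothesis (\ref{memesecteur}), the base point $(a,b)$ lies in $\cA \cap S$, so this sector is nontrivial and the setting of Remark \ref{thmsurlessecteurs} applies.

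First I would translate the conditions in the conclusion into the arithmetic language of Theorem \ref{conjecturesfacteurs}. For any $(u,v) \in \cA$, the divisibilities $p_i^{t_i+t_i'+1} \mid N$ combined with hypothesis (\ref{puissanceok}) at the base point force $v_{p_i}(f(u,v)) = t_i$ and $v_{p_i}(g(u,v)) = t_i'$ exactly. Writing $f(u,v) = \prod p_i^{t_i}\, l$ and $g(u,v) = \prod p_i^{t_i'}\, l'$, the integers $l, l'$ are automatically coprime to $\prod p_i$, and asking that each be squarefree is the same as asking $(fg)(u,v)/\prod p_i^{t_i+t_i'}$ to be squarefree. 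By hypothesis (\ref{paszero}) together with $p^2 \mid N$ for all primes $p < \deg f + \deg g$, no prime $p \mid N$ distinct from any $p_i$ can contribute a squared factor. The only remaining obstruction is therefore a squared factor at some prime $p \nmid N$, which is exactly the squarefreeness condition (1) of Theorem \ref{conjecturesfacteurs} (where $d_{fg,\cA}$ absorbs the forced divisibility at primes dividing $N$).

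Next I would invoke the sector-refined version of Theorem \ref{conjecturesfacteurs} from Remark \ref{thmsurlessecteurs} on $f$ and $g$ over $\cA$ intersected with each connected sector of $S$. Its hypotheses are satisfied: $f$ and $g$ are coprime squarefree homogeneous polynomials; the Squarefree conjecture for $f$, $g$ and $fg$ is the content of Theorem \ref{theosqf}, since every irreducible factor has degree at most $6$; and the sectorial Chowla conjecture for $f$ is known under our assumption ($\deg f \leq 3$ or $f$ is a product of linear forms) by the references cited in Remark \ref{thmsurlessecteurs}. This yields the asymptotic
\[
T_S(X) = \frac{C_{fg,\cA \cap S}}{2}\, X^2 + o(X^2),
\]
where $T_S(X)$ counts the pairs $(u,v) \in (\cA \cap S)(X)$ satisfying both the squarefree condition on $(fg)(u,v)/d_{fg,\cA}$ away from $N$ and the sign condition $\lambda(f(u,v)) = \epsilon$.

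It then remains to check that $C_{fg,\cA \cap S} > 0$ in order to conclude. Every local factor at a prime $p \nmid N$ is strictly positive because $fg$ is squarefree (Hensel's lemma produces abundantly many residues modulo $p^2$ avoiding $p^2 \mid (fg)$), the product defining the constant converges absolutely, and $\cA \cap S$ has positive density in $\R^2$ since $S$ is a nonempty union of open homogeneous cones and $\cA$ is a full-rank translated sublattice. Hence $T_S(X) \to \infty$, producing infinitely many pairs $(u,v)$ satisfying all four conclusions of the corollary. I expect the main obstacle to be less conceptual than bureaucratic: tracking the constant $d_{fg,\cA}$ and verifying that the strengthened choice $p_i^{t_i + t_i' + 1} \mid N$ really yields exact valuations $t_i$ and $t_i'$ (rather than mere inequalities), while hypothesis (\ref{paszero}) together with $p^2 \mid N$ for small primes $p$ rules out accidental squared factors at other primes dividing $N$.
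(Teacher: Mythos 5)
Your proposal is correct and follows essentially the same route as the paper: apply Theorem \ref{conjecturesfacteurs} together with Remark \ref{thmsurlessecteurs} to $f$, $g$ and $\cA$, then use hypothesis (\ref{paszero}) and the congruences modulo $p_i^{t_i+t_i'+1}\mid N$ to pin down the exact valuations at the $p_i$ and exclude square factors at the remaining primes dividing $N$. You are in fact slightly more careful than the paper in explicitly verifying that the leading constant $C_{fg,\cA}$ is positive, which is needed to conclude that the count tends to infinity.
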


\begin{proof}
Fix $\epsilon\in\{-1,+1\}$. Let $\mathscr{A}=\{(u,v)\in\Z\times\Z\ \vert \ u,v\equiv a,b\mod N\}$. By Theorem \ref{conjecturesfacteurs} on $F$, $G$ and $\cA$ together with Remark \ref{thmsurlessecteurs}, there are infinitely many pairs of integers $(u,v)$ such that 
\begin{equation*}
u\equiv a\mod N,\quad v\equiv b\mod N,\end{equation*} 
\begin{equation*}
f(u,v)g(u,v)\not\equiv 0\mod p^2\text{ whenever }p\nmid N,\end{equation*} 
\begin{equation*}
\forall 0\leq j\leq n, \ R_i(u,v)>0\end{equation*}
\begin{equation*}\text{ and } \lambda(f(u,v))=\epsilon.
\end{equation*}
Condition \ref{paszero} then guarantees that $f(u,v)$ is not divisible by the square of any prime outside the sequence $S$. We also have \[u\equiv a\mod p_i^{t_i+1},\quad v\equiv b\mod p_i^{t_i+1},\quad\text{ for all }i,\]
because $p_i^{t_i+1}\mid N$ for all $i$, and hence
\[f(u,v)=f(a,b)\mod p_i^{t_i+1}\quad\text{ and }g(u,v)=g(a,b)\mod p_i^{t'_i+1}\quad\text{ for all }i.\]
Using condition \ref{puissanceok}, we conclude that
\[v_{p_i}(f(u,v))=t_i\quad\text{ and }v_{p_i}(g(u,v))=t'_i\]

\end{proof}
\section{A formula for the global root number}\label{sectionformuledusigne}

We will use the following proposition, due to Rohrlich, which gives a formula for the local root number of an elliptic curve at primes $p\geq 5$.

\begin{propo}\label{Rohrlichroot} (\cite[Proposition 2]{Rohr}) Let $p\geq5$ be a rational prime, and let $E/\Q_p$ be an elliptic curve given by the Weierstrass equation \[E:y^2=x^3-27c_4x-54c_6,\]
where $c_4,c_6\in\Z$. 
Then
\begin{displaymath}
W_p(E)=\begin{cases}
1 & \text{if the reduction of $E$ at $p$ has type $I_0$};\\
(\frac{-1}{p}) & \text{if the reduction has type $II$, $II^*$, $I_m^*$ or $I_0^*$};\\
(\frac{-2}{p}) & \text{if the reduction has type $III$ or $III^*$};\\
(\frac{-3}{p}) & \text{if the reduction has type $IV$ or $IV^*$;}\\
-(\frac{-c_6}{p}) & \text{if the reduction has type $I_{m}$};\\
\end{cases}
\end{displaymath}
\end{propo}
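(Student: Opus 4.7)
The plan is to identify the Weil-Deligne representation $\sigma_p(E)$ attached to the $\ell$-adic Tate module of $E/\Q_p$ (for any $\ell\neq p$) and to compute the sign of its local $\epsilon$-factor, which is by definition $W_p(E)$. Since $p\geq 5$ is coprime to $6$, the inertia subgroup $I_p$ acts on $\sigma_p(E)$ through a tame cyclic quotient of order $e\in\{1,2,3,4,6\}$ determined by the Kodaira type, with the ``Steinberg'' case arising separately for $I_m$; this tame regime makes every $\epsilon$-factor computation explicit.

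First I would dispatch the easy cases. For type $I_0$, $\sigma_p(E)$ is unramified and its $\epsilon$-factor is $+1$, giving $W_p(E)=+1$. For type $I_m$, $\sigma_p(E)$ is a Steinberg-type representation twisted by the unramified quadratic character distinguishing split from non-split multiplicative reduction; its $\epsilon$-factor picks up a $-1$ from the Steinberg part and a factor $\bigl(\tfrac{-c_6}{p}\bigr)$ from the twist, because on the minimal model $y^2=x^3-27c_4x-54c_6$ the tangent slopes at the node modulo $p$ are rational (split reduction) if and only if $-c_6$ is a square modulo $p$.

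For additive potentially good reduction, the restriction $\sigma_p(E)|_{I_p}$ is tame of order $e$, so after an unramified twist one can realize $\sigma_p(E)=\mathrm{Ind}_{W_K}^{W_{\Q_p}}\psi$ for a tame character $\psi$ of the Weil group $W_K$ of a suitable subextension $K/\Q_p$. By the inductivity of $\epsilon$-factors, the computation collapses to a tame Gauss sum for $\psi$ times a Langlands normalization constant, and the sign of such a tame Gauss sum is a quadratic Legendre symbol. A case-by-case analysis, organized by the order $e$ through which inertia acts, yields $\bigl(\tfrac{-1}{p}\bigr)$ for types $II$, $II^*$, $I_0^*$, $I_m^*$; $\bigl(\tfrac{-2}{p}\bigr)$ for $III$, $III^*$; and $\bigl(\tfrac{-3}{p}\bigr)$ for $IV$, $IV^*$.

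The main obstacle is the careful identification of the inducing character $\psi$ for each of the eight additive potentially good Kodaira types and the bookkeeping of the Langlands normalization constants in the inductivity formula; this relies on the Serre-Tate description of the Galois action on the N\'eron model, sometimes through an auxiliary CM elliptic curve. Once $\psi$ is pinned down, the resulting signs follow from classical tame Gauss-sum evaluations, which I would execute case by case to match the stated table.
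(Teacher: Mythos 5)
The paper does not prove this statement at all: it is quoted verbatim as \cite[Proposition 2]{Rohr}, so there is no internal proof to compare against. What you have written is, in outline, Rohrlich's own argument (Weil--Deligne representation, tame inertia for $p\geq5$, $\epsilon$-factors via inductivity, evaluation of tame Gauss sums), and the easy cases are handled correctly: $I_0$ gives $+1$, and for $I_m$ the split/non-split dichotomy is indeed governed by whether $-c_6$ is a square modulo $p$ on the model $y^2=x^3-27c_4x-54c_6$, yielding $-\bigl(\tfrac{-c_6}{p}\bigr)$.

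There is, however, one point where your plan as written would fail, and one sense in which it is not yet a proof. First, you file $I_m^*$ ($m\geq1$) under ``additive potentially good reduction'' and speak of ``eight additive potentially good Kodaira types''; in fact only seven additive types ($II$, $III$, $IV$, $I_0^*$, $IV^*$, $III^*$, $II^*$) are potentially good, while $I_m^*$ with $m\geq1$ is additive with \emph{potentially multiplicative} reduction. For it, $\sigma_p(E)$ is a ramified quadratic twist $\chi\otimes\mathrm{sp}(2)$ of the special representation, not an induced representation from a tame character of a subextension, and the root number comes out as $\chi(-1)=\bigl(\tfrac{-1}{p}\bigr)$ by a separate (shorter) computation; the agreement with the $e=2$ potentially good case $I_0^*$ is a coincidence of values, not of arguments. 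Second, the entire additive potentially good case is deferred: identifying the inducing character $\psi$ for each type and evaluating the Gauss sums is precisely the content of Rohrlich's proof, so as it stands your text is a correct roadmap rather than a verification. If the intent is to justify the proposition independently, those case-by-case computations (or at least the $e=2,3,4,6$ dichotomy with $e=12/\gcd(v_p(\Delta),12)$ and the resulting symbols) need to be carried out; otherwise the honest course is to do what the paper does and cite Rohrlich.
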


Note that the local root number at infinity is always $W_\infty(E)=-1$.
The values in the previous proposition are Jacobi symbols. 

\subsection{Notations and definitions}


For each rational number $t\in\Q$ there is a unique pair of coprime integers \begin{equation}\label{t=xy}(u(t),v(t))\in\Z\times\Z_{>0}\quad\text{ such that }t=\frac{u(t)}{v(t)}.\end{equation} To simplify the notation, we will simply write it as $(u,v)$ if $t$ is obvious.

For an integer $N$ and a prime number $p$, we denote by $N_{(p)}$ the integer such that $N=p^{\nu_p(N)}N_{(p)}$. Similarly, given a positive integer $\delta$, we will denote by $a_{(\delta)}$ the positive integer such that \[a_{(\delta)}=\frac{ a}{\prod_{p\mid\delta}{p^{\nu_p(a)}}}.\]
\subsubsection{A curve isomorphic to a fiber}\label{notaroot}

Let $\E$ be an elliptic surface described by the Weierstrass equation \[\E:y^2=x^3-27c_4(T)x-54c_6(T),\]
that we suppose to be a minimal Weierstrass model.

If $t\in\Q$ is not an integer, it is very likely that $c_6(t)$ is not an integer either, and in this case, we cannot directly use Proposition \ref{Rohrlichroot} on $\E_t$ to find its root number. However, we can consider the elliptic curve which is isomorphic to $\E_t$: \[\E_{u,v}:y^2=x^3-27v^{4k}c_4(u/v)x-54v^{6k}c_6(u/v),\]
where $(u,v)$ are as defined as in (\ref{t=xy}) and $k$ is the smallest integer such that both $4k\geq\deg A$ and $6k\geq\deg B$. To lighten the notation we will denote \[\overline{c_4}(u,v)=v^{4k}c_4(u/v),\quad\text{ and }\overline{c_6}(u,v)=v^{6k}c_6(u/v).\]
The coefficients of $\E_{u,v}$ are integers and we can apply Proposition \ref{Rohrlichroot}. Since $\E_{u,v}$ and $\E_t$ are isomorphic, we have \[W(\E_t)=W(\E_{u,v}).\]

Note that this elliptic curve's discriminant is equal to $\Delta_\E(u,v)$ the discriminant of the elliptic surface evaluated at $(u,v)$: \[\Delta_\E(u,v)=\Delta(\E_{u,v})=v^{12k}(c_4(u/v)^3-c_6(u/v)^2).\]

\subsubsection{Local constancy}

\begin{lm} \label{localrootlemma}
The local root number $W_p$ is locally constant for the $p$-adic topology.

In other words, if we denote $U=U_\E\subset\P^1$ the affine open subset above which $\E$ has good reduction, the map from $U(\Q_p)$ to $\{\pm1\}$ defined by
\[\overline{t}\mapsto W_p(\E_t)\]
 is continuous (i.e. locally constant).
\end{lm}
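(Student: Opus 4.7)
The plan is to reduce everything to the explicit formulas already recalled in the paper. For $p\geq 5$, Rohrlich's formula (Proposition \ref{Rohrlichroot}) expresses $W_p(\E_t)$ in terms of the reduction type of $\E_t$ and, in the case of type $I_m$, the Jacobi symbol $-\left(\frac{-c_6}{p}\right)$ computed on the minimal Weierstrass model of $\E_t$ over $\Z_p$. For $p \in \{2,3\}$, the Halberstadt--Rizzo tables \cite{Halb,Rizz} provide the analogous expression in terms of the $p$-adic valuations of $c_4, c_6, \Delta$ and their residues modulo a fixed (finite) power of $p$. In every case the value $W_p(\E_t)$ is determined by a finite list of integers: valuations and residues of $\overline{c_4}, \overline{c_6}, \Delta_\E$ computed on the minimal model. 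It therefore suffices to show that each of these quantities depends in a locally constant way on $t \in U(\Q_p)$.

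Fix $t_0 \in U(\Q_p)$, and choose $(u_0,v_0) \in \Z_p^2$ with $\min(v_p(u_0), v_p(v_0))=0$ and $t_0 = u_0/v_0$. For any $t$ in a sufficiently small $p$-adic neighborhood of $t_0$, one can write $t = u/v$ with $(u,v) \in \Z_p^2$ close to $(u_0,v_0)$. The isomorphic Weierstrass model $\E_{u,v}$ introduced in Section \ref{notaroot} then makes sense over $\Z_p$, and its coefficients $\overline{c_4}(u,v)$, $\overline{c_6}(u,v)$, together with $\Delta_\E(u,v)$, are \emph{polynomials} in $(u,v)$. Because $t_0 \in U$ forces $\Delta_\E(u_0,v_0) \neq 0$, the $p$-adic valuations $v_p(\overline{c_4}(u,v))$, $v_p(\overline{c_6}(u,v))$, $v_p(\Delta_\E(u,v))$ are finite and locally constant near $(u_0,v_0)$; so are the residues of these quantities modulo any prescribed power of $p$, since polynomials are $p$-adically continuous.

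Passing from the model $\E_{u,v}$ to a minimal Weierstrass model over $\Z_p$ amounts to dividing the coefficients by a definite power of $p$, and that power is dictated by the triple $\bigl(v_p(\overline{c_4}),v_p(\overline{c_6}),v_p(\Delta_\E)\bigr)$ via Tate's algorithm \cite{Tatefibre}. Since this triple is locally constant, the valuations and residues attached to the \emph{minimal} model are also locally constant on a neighborhood of $t_0$. Feeding these into Rohrlich's formula (for $p\geq 5$) or the Halberstadt--Rizzo tables (for $p \in \{2,3\}$) shows that $W_p(\E_t)$ is constant on that neighborhood, which is precisely the claim. The only delicate point to keep track of is exactly this minimality issue: a priori the minimalizing change of variables could jump discontinuously with $t$, but the local constancy of the underlying valuations $v_p(\overline{c_4}), v_p(\overline{c_6}), v_p(\Delta_\E)$ prevents this, which is what makes the argument go through uniformly in $t$.
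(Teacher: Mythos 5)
The paper does not actually prove Lemma \ref{localrootlemma}: it is stated as a known fact and used without justification, so there is no argument of the author's to measure yours against. Your strategy --- reduce to Rohrlich's formula for $p\geq 5$ and to the Halberstadt--Rizzo tables for $p=2,3$, and check that every input to those formulas (valuations and residues of $\overline{c_4}$, $\overline{c_6}$, $\Delta_\E$ on the minimal model) is a locally constant function of $t$ --- is the standard way to verify the lemma, and the overall shape of the argument is right.

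One intermediate claim is false as written and needs repair. You assert that $\Delta_\E(u_0,v_0)\neq 0$ forces the valuations $v_p(\overline{c_4}(u_0,v_0))$ and $v_p(\overline{c_6}(u_0,v_0))$ to be finite, hence locally constant near $(u_0,v_0)$. Since $\Delta_\E$ is proportional to $\overline{c_4}^{\,3}-\overline{c_6}^{\,2}$, its nonvanishing does not prevent $\overline{c_4}(u_0,v_0)=0$ (a good-reduction fibre with $j(t_0)=0$) or $\overline{c_6}(u_0,v_0)=0$ (a fibre with $j(t_0)=1728$); in those cases $v_p(\overline{c_4}(u,v))$ (resp.\ $v_p(\overline{c_6}(u,v))$) is unbounded near $(u_0,v_0)$ and certainly not locally constant, so the sentence ``since this triple is locally constant'' on which the minimality discussion rests does not hold verbatim. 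The fix is routine: only the truncated valuations $\min\bigl(v_p(\overline{c_4}),4m\bigr)$ and $\min\bigl(v_p(\overline{c_6}),6m\bigr)$, for a bound $m$ controlled by the finite and genuinely locally constant quantity $v_p(\Delta_\E(u,v))$, enter Tate's algorithm, the minimalizing change of variables, and the root-number formulas; these truncations \emph{are} locally constant because $\overline{c_4}$ and $\overline{c_6}$ are polynomials, hence $p$-adically continuous. With that adjustment your argument goes through; it would also be worth stating explicitly that for $p=2,3$ you are using the fact that the Halberstadt--Rizzo entries depend only on the valuations and on residues of $c_4,c_6,\Delta$ modulo a fixed power of $p$, which is what lets the same reasoning apply there.
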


For functions over  $\Z\times\Z$ or over $\Q$, we use the following similar properties:

\begin{definition}\label{localconstancy}
Let $\mathscr{S}\subseteq\Z\times\Z$. A function $f:\mathscr{S}\rightarrow\{\pm1\}$ is said \emph{locally constant} if there exists $N\in\mathbb{N}^*$ such that the congruences $(u,v)\equiv(u',v')\mod N$ implies
\[f\left(u,v\right)=f\left(u',v'\right).\] 

Similarly, let $\mathscr{T}\subseteq\Q$. A function $\varphi:\mathscr{T}\rightarrow\{\pm1\}$ is \emph{locally constant} if there exists $N\in\mathbb{N}^*$ such that the congruences $(u,v)\equiv(u',v')\mod N$ implies
\[\varphi\left(\frac{u}{v}\right)=\varphi\left(\frac{u'}{v'}\right).\] 
\end{definition}

However, the global root number is not locally constant. More precisely, it is never locally constant on non-isotrivial elliptic surfaces, as we will see later in this section, and is locally constant on the connected components of $$\R^2-\{(u,v)\in\R^2 \ \vert \ \tilde{F}(u,v)=0\}$$ on families of quartic or sextic twists by $F(T)\in\Z[T]$ i.e. on isotrivial elliptic surfaces with $j$-invariant outside of $0$ and $1728$, as we will see in a forthcoming paper \cite{Desjardins4}.\\

\subsection{Monodromy of reduction type}\label{monodromie}

Let $\E$ be the elliptic surface with discriminant $\Delta_\E(U,V)$ described by the minimal Weierstrass equation
\[Y^2=X^3-27c_4(T)X-54c_6(T).\]

Given a place over $\Q[T]$ corresponding to a monic irreducible homogeneous polynomial $P(U,V)\in\Z[U,V]$, we would like to know what is the type of the fiber $\E_t$ of $\E$ above $t\in\Q$ at a prime number $p\mid P(u,v)$.

Let $(u,v)\in\Z\times\Z_{>0}$ be the integers related to $t$ as defined in Section \ref{t=xy}.
To solve this question, we will analyze the values of the $p$-adic valuation of the discriminant $\Delta_\E(u,v)$ and of the coefficients of the curve $\E_{u,v}$

We suppose that $p$ has the following properties:
\begin{enumerate}
\item $p\not=2,3$;
\item the numerators $n_4$, $n_6$ and $n_\Delta$, of the contents of the polynomials $c_4(T)$, $c_6(T)$ and $\Delta(T)$ are not divisible by $p$\footnote{Since the Weierstrass equation is supposed minimal, we have that $c_4\in\left(\frac{1}{3^3}\right)\Z[T]$ and $c_6\in\left(\frac{1}{2\cdot3^3}\right)\Z[T]$. We have thus as well that $\Delta\in\left(\frac{1}{2\cdot3^3}\right)\Z[T]$.};
\item if $p\mid P(u,v)$, then for all $P'\not=P$ of bad reduction, one has $p\nmid P'(u,v)$. In other words, if we denote by $\Res(P,P')$ the resultant of two polynomials, we have \[p\nmid\prod_{Q,Q'\in\mathscr{B}; Q\not=Q'}{\mathrm{Res}(Q,Q')}.\]
\end{enumerate}
Note that almost all prime number verify the properties 1, 2, and 3.
There is only a finite number of exceptions, and these are the prime numbers dividing the integer

\[\delta=2\cdot3\cdot n_4\cdot n_6\cdot n_\Delta\prod_{Q,Q'\in\mathscr{B}\text{, }Q\not=Q'}{\mathrm{Res}(Q,Q'}),\]
where $Q,Q'$ run through polynomials associated to generic places of bad reduction.

The local root number at $p$ depends on the values of $\nu_p(\overline{c_4}(u,v))$, $\nu_p(\overline{c_6}(u,v))$ and $\nu_p(\Delta_\E(u,v))$ as seen in the Néron-Kodaira classification of the singular fibers (see \cite{Kodaira}, \cite{Neronfibre}). When $p$ respects the properties (1) to (3), we have the following equalities
\[\nu_p(\overline{c_4}(u,v))=\mathrm{w}_P(\overline{c_4}(U,V))\cdot \nu_p(P(u,v)),\] 
\[\nu_p(\overline{c_6}(u,v))=\mathrm{w}_P(\overline{c_6}(U,V))\cdot \nu_p(P(u,v))\]
\[\nu_p(\Delta_\E(u,v))=\mathrm{w}_P(\Delta_\E(U,V))\cdot \nu_p(P(u,v)),\]
where $\mathrm{w}_P$ is the place of $\Q[T]$ associated to $P$. The reduction type at such a $p$ depends only on the $p$-adic valuation of $P(u,v)$.

This argument leads to the following lemma which is summarized in Table \ref{tableaumonodromie} below:

\begin{lm}\label{lemmemonodromie}
Let $\E$ be an elliptic surface, and $p$ be a prime number that does not divide $\delta$.
Let $P$ be the monic irreducible homogeneous polynomial associated to a place of $\Q[T]$. For all $t\in\Q$, let $(u,v)\in\Z\times\Z_{>0}$ be the pair of coprime integers such that $t=\frac{u}{v}$ as in Section \ref{t=xy}. Suppose that $P(u,v)$ is divisible by $p$ and put $n=\nu_p(P(u,v))$.

Then :
\begin{enumerate}
\item If $\E$ has type $I_m$ at $P$, then $\E_{t}$ has type $I_{nm}$ at $p$.
\item If $\E$ has type $I_m^*$ at $P$, then $\E_{t}$ has type $I_{nm}$ at $p$ If $n$ is even, and of type $I_{nm}^*$ if $n$ is odd.
\item If $\E$ has type $II$ at $P$, then $\E_{t}$ has type $I_0$, $II$, $IV$, $I_0^*$, $IV^*$, $II^*$ at $p$ if $n\equiv0,1,2,3,4,5\mod6$ respectively.
\item If $\E$ has type $II^*$ at $P$, then $\E_{t}$ has type $I_0$, $II^*$, $IV^*$, $I_0^*$, $IV$, $II$ at $p$ if $n\equiv0,1,2,3,4,5\mod6$ respectively.
\item If $\E$ has type $III$ at $P$, then $\E_{t}$ has type $I_0$, $III$, $I_0^*$, $III^*$ at $p$ if $n\equiv0,1,2,3\mod4$ respectively.
\item If $\E$ has type $III^*$ at $P$, then $\E_{t}$ has type $I_0$, $III^*$, $I_0^*$, $III$ at $p$ if $n\equiv0,1,2,3\mod4$ respectively.
\item If $\E$ has type $IV$ at $P$, then $\E_{t}$ has type $I_0$, $IV$, $IV^*$ at $p$ if $n\equiv0,1,2\mod3$ respectively.
\item If $\E$ has type $IV^*$ at $P$, then $\E_{t}$ has type $I_0$, $IV^*$, $IV$ at $p$ if $n\equiv0,1,2\mod3$ respectively.
\end{enumerate}
\end{lm}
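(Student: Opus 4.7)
The plan is to combine the three displayed identities just before the lemma---which compute $\nu_p(\overline{c_4}(u,v))$, $\nu_p(\overline{c_6}(u,v))$, $\nu_p(\Delta_\E(u,v))$ as $n$ times the corresponding $P$-adic valuations of $\overline{c_4}, \overline{c_6}, \Delta_\E$ over $\Q(T)$---with Tate's algorithm at $p$ and the N\'eron--Kodaira classification. Concretely, I would read off the triple of $P$-adic valuations characterising each reduction type of $\E$ at $P$, multiply it by $n$ to obtain the $p$-adic valuations of the coefficients of the (possibly non-minimal) model $\E_{u,v}$, minimalise at $p$, and then identify the Kodaira type of the resulting minimal model.

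First I would tabulate, from Tate's table, the triple $(w_P(\overline{c_4}), w_P(\overline{c_6}), w_P(\Delta_\E))$ for each reduction type of $\E$ at $P$: for instance $I_m$ gives $(0, 0, m)$; $I_m^*$ with $m\geq 1$ gives $(2, 3, m+6)$; the five types $II, IV, I_0^*, IV^*, II^*$ give $(\geq 1,1,2), (\geq 2,2,4), (\geq 2,\geq 3,6), (\geq 3,4,8), (\geq 4,5,10)$; and $III, III^*$ give $(1,\geq 2,3), (3,\geq 5,9)$. After scaling by $n$, the admissible substitution $(x,y)\mapsto (p^{2k}x', p^{3k}y')$ (allowed since $p\neq 2,3$) subtracts $(4k, 6k, 12k)$ from the triple and produces a minimal model as soon as $k=\min\bigl(\lfloor \nu_p(\overline{c_4}(u,v))/4\rfloor, \lfloor \nu_p(\overline{c_6}(u,v))/6\rfloor\bigr)$. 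A second inspection of Tate's table then names the reduction type of $\E_t$ at $p$.

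The rest is a mechanical case analysis on the eight types. In (1) the scaled triple $(0, 0, nm)$ is already minimal, giving $I_{nm}$. In (2) the scaled triple $(2n, 3n, n(m+6))$ has $k=\lfloor n/2\rfloor$: when $n$ is even this yields $(0, 0, nm)$, hence $I_{nm}$, and when $n$ is odd it yields $(2, 3, nm+6)$, hence $I_{nm}^*$. In cases (3)--(8), the equality on $w_P(\overline{c_6})\in\{1,2,4,5\}$ (for $II, IV, IV^*, II^*$) or on $w_P(\overline{c_4})\in\{1, 3\}$ (for $III, III^*$) is the term limiting $k$, so after minimalisation the resulting triple depends only on $n$ modulo $6$, $3$ or $4$ respectively. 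For example in case (3), writing $n=6q+r$ with $0\leq r\leq 5$ gives minimal valuations $(\nu_p(\overline{c_4}(u,v)) - 4q,\,r,\,2r)$, and the six residues $r=0,1,2,3,4,5$ produce in turn $I_0, II, IV, I_0^*, IV^*, II^*$, as claimed. The remaining five cases are analogous.

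The only delicate point I anticipate is handling the strict inequalities that appear in the N\'eron--Kodaira characterisation (such as ``$w_P(\overline{c_4})\geq 1$'' for type $II$, a quantity that could a priori be arbitrary): one must verify that the excess over the equality value is absorbed by the floor in the minimalisation and therefore does not perturb the resulting Kodaira type. This amounts to observing that in Tate's classification each additive type is pinned down by an equality on exactly one of $w(\overline{c_4}), w(\overline{c_6})$ together with inequalities on the others, and that this equality is precisely the limiting datum in the definition of $k$; checking this invariance in each of the eight cases is the most tedious but entirely routine part of the argument.
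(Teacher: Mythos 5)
Your argument is correct and is exactly the computation the paper leaves implicit: its proof of this lemma is the single sentence ``This can be directly deduced from Tate's algorithm,'' and your tabulation of the valuation triples, scaling by $n$, and minimalisation via $(x,y)\mapsto(p^{2k}x',p^{3k}y')$ with $k=\min\bigl(\lfloor\nu_p(\overline{c_4})/4\rfloor,\lfloor\nu_p(\overline{c_6})/6\rfloor\bigr)$ is precisely that deduction carried out. Your closing remark about the strict inequalities being absorbed by the floor is the right point to check, and your verification of it is sound.
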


\begin{center}
\begin{table}
   \begin{tabular}{| c c l | c c l |}
     \hline
Type of $\E_T$  & $n=\nu_p(P(u,v))$  & Type of $\E_t$ & Type of $\E_T$ & $n=\nu_p(P(u,v))$ & Type of $\E_t$ \\
at $P(T)$& & at $p$& at $P(T)$ && at $p$\\
\hline
     \multirow{2}{*}{$I_m$} & \multirow{2}{*}{$n\geq 1$} & \multirow{2}{*}{$I_{mn}$} & \multirow{2}{*}{$I_m^*$} & $0\mod2$ & $I_{mn}$ \\
&&&& $1\mod2$ &$I_{mn}^*$  \\
\hline
\multirow{6}{*}{$II$} &  $0\mod6$ & $I_0$ &\multirow{6}{*}{$II^*$}& $0\mod6$ & $I_0$\\
& $1\mod6$ & $II$ &&$1\mod6$ &$II^*$\\
&$2\mod6$ & $IV$ && $2\mod6$ &$IV^*$\\
&$3\mod6$ & $I_0^*$& &$3\mod6$&$I_0^*$\\
&$4\mod6$ & $IV^*$ & &$4\mod6$& $IV$\\
&$5\mod6$	& $II^*$ & &$5\mod6$&$II$\\
\hline
 \multirow{4}{*}{$III$}    & $0\mod2$& 	$I_0$ & \multirow{4}{*}{$III^*$}& $0\mod4$ & $I_0$\\
&$1\mod4$&$III$ & &$1\mod4$ & $III^*$ \\
&$2\mod4$& $I_0^*$ && $2\mod4$ & $I_0^*$\\
&$3\mod4$&	$III^*$&& $3\mod4$	& $III$	\\
\hline
 \multirow{3}{*}{$IV$}    & $0\mod3$ & $I_0$ & \multirow{3}{*}{$IV^*$} &$0\mod3$ & $I_0$\\
&$1\mod3$ & $IV$&& $1\mod3$&$IV^*$\\
&$2\mod3$	& $IV^*$	&& $2\mod3$& $IV$\\
\hline
\multirow{2}{*}{$I_0^*$} & $0\mod2$ & $I_0$ & \multirow{2}{*}{$I_0$} & \multirow{2}{*}{$n\geq0$} & \multirow{2}{*}{$I_0$}\\ & $1\mod2$ & $I_0^*$ & 			& & \\
   \hline
\end{tabular}
\caption{\label{tableaumonodromie}Lemma \ref{lemmemonodromie} 
}
\end{table}
 \end{center}
\begin{proof}
This can be directly deduced from Tate's algorithm \cite{Tatefibre}.
\end{proof}

\subsection{Decomposition of the root number according to the generic places}

Let $\E$ be an elliptic surface described by the Weierstrass equation: \[\E:y^2=x^3-27c_4(T)x-54c_6(T),\]
that we suppose to be a minimal Weierstrass model.

When $t\in\Q$ is not an integer and $\E_t$ is non-singular, then as in Section \ref{notaroot}, the root number of $\E_t$ is the same as the isomorphic elliptic curve $\E_{u,v}$ and thus $W(\E_t)=W(\E_{u,v})$.
Remember that $\E_{u,v}$ is the curve given by the equation:
\[\E_{u,v}:y^2=x^3-27\overline{c_4}(u,v)x-54\overline{c_6}(u,v),\]
where
\[\overline{c_4}(u,v)=v^{4k}c_4(u/v) \qquad \overline{c_6}(u,v)=v^{6k}c_6(u/v)\]
and
where $k$ be the smallest integer such that both $4k\geq\deg A$ and $6k\geq\deg B$ holds. This curve has discriminant $\Delta(\E_{u,v})=v^{12k}\Delta(\E_{u/v})$.\\
We have thus 
\begin{align*}
W(\E_t)&= W(\E_{u,v})\\
& = \prod_{p\leq\infty}{W_p(\E_{u,v})}\\
& = -\prod_{p\mid\Delta(\E_{u,v})}{W_p(\E_{u,v}})\\
\end{align*}

Let us write
\begin{displaymath}
\overline{c_4}(U,V)=\frac{n_4}{d_4}\prod_{P\mid c_4}{P(U,V)^{\mathrm{w}_P (c_4)}},
\end{displaymath}
\begin{displaymath}
\overline{c_6}(U,V)=\frac{n_6}{d_6}\prod_{P\mid c_6}{P(U,V)^{\mathrm{w}_P(c_6)}}, 
\end{displaymath}
\begin{displaymath}
\Delta_\E(U,V)=\frac{n_\Delta}{d_\Delta}\prod_{P\vert\Delta}{P(U,V)^{\mathrm{w}_P(\Delta)}},
\end{displaymath}
with minimal $n_4,n_6,n_\Delta,d_4,d_6,d_\Delta\in\mathbb{Z}$ and where $\mathrm{w}_P$ is the valuation at $P$. Remark that the prime factors of the denominators ($d_4$, $d_6$ and $d_\Delta$) will only be $2$ or $3$ since $27c_4(T),54c_6(T)\in\Z[T]$.

As we saw in Section \ref{monodromie}, we have to treat differently the local root number at 2 and 3, in $d_i$ for $i=0,\dots,5$ and at $p$ that divide both $P$ and $P'$, that is to say the $p$ dividing \[\prod_{P,P'\mid B_\E\atop P\not= P'}{\mathrm{Res}(P,P')}.\] 
For the other primes, the behavior of the reduction type of the fibers is described by the lemma \ref{lemmemonodromie}.
Put
\begin{displaymath}
\delta=2\cdot3\cdot n_4\cdot n_6\cdot n_\Delta \prod_{P,P'\mid\Delta_\E}{\mathrm{Res}(P, P')}.
\end{displaymath}
The root number can be expressed as
\begin{displaymath}
W(\mathscr{E}_t)=-\prod_{p\mid\delta}{W_p(\E_t)}\prod_{P\mid\Delta_\E}{\mathscr{W}_{P}(u,v)},
\end{displaymath}
where for each $P$ primitive factor of $\Delta_\E$:
\begin{displaymath}
\W_{P}(u,v)=\prod_{p \nmid\delta:p\mid P(u,v)}{W_p(\mathscr{E}_t)}.
\end{displaymath}

Now, by Lemma \ref{monodromie}, each of the $W_p(\E_t)$ depends only of $P$ and of $n=\nu_p(P(u,v))$. In particular $n=1$, then the type of reduction of $\E_t$ is the same as the one of $\E$ at $P$. We have thus in this case:
\[W_p(\E_t)=\begin{cases}
\left(\frac{-\varepsilon_{P}}{p}\right)&\text{ if $\E$ has additive reduction at $P$}\\
-\left(\frac{-\overline{c_6}(u,v)}{p}\right)&\text{ if $\E$ has multiplicative reduction at $P$,}
\end{cases}\]
where \[\varepsilon_{P}=\begin{cases}
-1&\text{ if the type is $II$, $II^*$ or $I_m^*$ ($m\leq0$)}\\
-2&\text{ if the type is $III$ or $III^*$}\\
-3&\text{ if the type is $IV$ or $IV^*$.}\\
\end{cases}\]

However, when $n\geq2$, the type of $\E$ is suceptible to change. For this reason, we introduce a corrective function, denoted by $h_{P}$ and equal to $\frac{g_P}{\W_P}$ so we can write:

\[\W_{P}(u,v)=h_{P}(u,v)\begin{cases}
\left(\frac{-\varepsilon_{P}}{P(u,v)}\right)_\delta&\text{ if $P$ additive type}\\
(-1)^{\omega(P(u,v)_{(\delta)})}\left(\frac{-\overline{c_6}(u,v)}{P(u,v)}\right)_\delta&\text{ if $P$ has multiplicative type,}
\end{cases}\]
where $\omega(n)$ is the number of prime factors of the integer $n$ and $(\frac{\cdot}{\cdot})_\delta$ is the quadratic symbol defined as follows:

\begin{definition}
For each pair of integers $(a,b)\in\Z\times\Z$ and even integer $\delta$,

\begin{equation}\label{quadraticsymboldef}\left(\frac{a}{b}\right)_\delta=\prod_{p\nmid\delta}{\left(\frac{a_{(p)}}{p}\right)^{\nu_p(b)}}\end{equation}
where the product runs through the prime number $p\nmid\delta$ and $(\frac{\cdot}{p})$ is Legendre symbol.
\end{definition}

If $a,b,\delta$ are two-by-two coprime, then the symbol $(\frac{a}{b})_\delta$ is the classical Jacobi symbol. The properties of this symbol are displayed in Appendix \ref{quadraticsymbol}.

Note that \begin{align}(-1)^{\omega(P(u,v)_{(\delta)})}&=\prod_{p\nmid\delta \atop p\mid P(u,v)_{(\delta)}}{(-1)^{v_p(P(u,v))}}.\\
&=\lambda(P(u,v))\prod_{p\mid\delta}{(-1)^{\nu_p(P(u,v))}}\end{align}

Thus, we have the following:

\begin{thm} \label{formuledusigne}

Let $\mathscr{E}$ be an elliptic surface over $\mathbb{Q}$. Let $\delta$ be defined as previously. For $t\in\Q$, let $(u,v)\in\Z\times\Z_{>0}$ be as in Section \ref{t=xy} the pair of coprime integers such that $t=\frac{u}{v}$.

Then, the root number can be written as \[W(\E_t)=\lambda(M_\E(u,v))\cdot\prod_{p\mid\delta}{W_p(\E_{u,v})}\cdot\prod_{P\mid\Delta_\E}{h_{P}(u,v)g_{P}(u,v)}\]
 where the functions $h_{P}$ are the corrective functions defined earlier and whose formula is given in Table \ref{contributions} and
\[ g_{P}(u,v)=
\begin{cases}
\left(\frac{-\varepsilon_{P}}{P(u,v)}\right)_{\delta}&\text{ if $\E$ has additive reduction}\\
\left(\frac{-\overline{c_6}(u,v)}{P(u,v)}\right)_{\delta}\left(\prod_{p\mid\delta}{(-1)^{\nu_p(P(u,v))}}\right)&\text{ if $\E$ has multiplicative reduction,}
\end{cases}.
\]

\end{thm}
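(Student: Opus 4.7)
The strategy is to compile the ingredients developed in the preceding subsections into a single closed formula. First, I would exploit the isomorphism $\E_t \cong \E_{u,v}$ of Section \ref{notaroot} to transfer the computation to an elliptic curve with integral coefficients, so that $W(\E_t) = W(\E_{u,v}) = \prod_{p \leq \infty} W_p(\E_{u,v})$ and Rohrlich's formula (Proposition \ref{Rohrlichroot}) is directly applicable at every $p \geq 5$. Since $W_p \equiv +1$ at primes of good reduction, the product is finite. By the definition of $\delta$, for any prime $p \nmid \delta$ of bad reduction the divisibility $p \mid P(u,v)$ occurs for exactly one primitive factor $P \in \mathscr{B}$ (any two distinct $P, P'$ would force $p \mid \mathrm{Res}(P,P') \mid \delta$, a contradiction). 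This justifies the clean factorisation
\[
W(\E_t) \;=\; W_\infty(\E_{u,v}) \cdot \Bigl(\prod_{p \mid \delta} W_p(\E_{u,v})\Bigr) \cdot \prod_{P \in \mathscr{B}} \mathscr{W}_P(u,v),
\]
with $\mathscr{W}_P(u,v) := \prod_{p \nmid \delta,\, p \mid P(u,v)} W_p(\E_{u,v})$ as already introduced in the text.

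Second, I would evaluate each $\mathscr{W}_P(u,v)$ prime by prime, combining Lemma \ref{lemmemonodromie} with Rohrlich's formula. The lemma converts $n := \nu_p(P(u,v))$ and the generic type of $\E$ at $P$ into the actual type of $\E_{u,v}$ at $p$, and then Proposition \ref{Rohrlichroot} reads off $W_p(\E_{u,v})$. When $n = 1$ the generic type is preserved, so
\[
W_p(\E_{u,v}) \;=\; \Bigl(\tfrac{-\varepsilon_P}{p}\Bigr) \ \text{(additive)}, \qquad
W_p(\E_{u,v}) \;=\; -\Bigl(\tfrac{-\overline{c_6}(u,v)}{p}\Bigr) \ \text{(multiplicative)}.
\]
Assembling these identities over all $p \nmid \delta$ with $p \mid P(u,v)$ and repackaging the Legendre symbols through the $\delta$-quadratic symbol of Section \ref{sectionformuledusigne} produces the ``generic'' factor $g_P(u,v)$ of the statement.

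Third, I would define the corrective function by the prescription $h_P(u,v) := \mathscr{W}_P(u,v) / g_P(u,v)$; then $\mathscr{W}_P = h_P\, g_P$ tautologically, and $h_P$ equals $1$ at any prime with $\nu_p(P(u,v)) = 1$. At primes with $n \geq 2$, the reduction type of $\E_{u,v}$ may differ from the generic type, and the value of $h_P$ is obtained by comparing the Rohrlich factor for the new type (given by Lemma \ref{lemmemonodromie}) with the raw $n$-th power appearing in the quadratic symbol; a case analysis on the rows of Table \ref{tableaumonodromie} produces the entries recorded in Table \ref{contributions}. To consolidate the multiplicative contributions, I would apply the identity
\[
(-1)^{\omega(P(u,v)_{(\delta)})} \;=\; \lambda(P(u,v)) \cdot \prod_{p \mid \delta}(-1)^{\nu_p(P(u,v))}
\]
to the $-1$'s produced by Rohrlich at multiplicative primes; total multiplicativity of $\lambda$ then collapses $\prod_{P\text{ mult.}} \lambda(P(u,v))$ into $\lambda(M_\E(u,v))$, while the residual $\prod_{p \mid \delta}(-1)^{\nu_p(P(u,v))}$ is absorbed into the multiplicative branch of $g_P$ as in the statement.

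The main obstacle is precisely this sign bookkeeping: Rohrlich's formula delivers one Legendre symbol per prime, whereas $\bigl(\cdot / P(u,v)\bigr)_\delta$ raises Legendre symbols to the power $\nu_p(P(u,v))$, and every discrepancy arising when $n \geq 2$ must be pushed into $h_P$ in a manner consistent with the reduction-type transitions of Lemma \ref{lemmemonodromie} (this is exactly the purpose of Table \ref{contributions}). One must also carefully track the archimedean sign $W_\infty = -1$ and the anomalous primes $p \mid \delta$, which are left opaque inside the explicit factor $\prod_{p \mid \delta} W_p(\E_{u,v})$ under the conventions fixed in the statement.
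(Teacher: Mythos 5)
Your proposal is correct and follows essentially the same route as the paper: transfer to $\E_{u,v}$, decompose $W(\E_{u,v})$ over primes, group the primes $p\nmid\delta$ by the unique primitive factor $P$ of $\Delta_\E$ with $p\mid P(u,v)$, compute each $\mathscr{W}_P$ via Lemma \ref{lemmemonodromie} combined with Rohrlich's formula, and define $h_P$ as the ratio of $\mathscr{W}_P$ to $g_P$, read off from the case analysis on reduction types. Your explicit tracking of $W_\infty=-1$ and of the identity $(-1)^{\omega(P(u,v)_{(\delta)})}=\lambda(P(u,v))\prod_{p\mid\delta}(-1)^{\nu_p(P(u,v))}$ (which extracts the factor $\lambda(M_\E(u,v))$) matches the paper's own bookkeeping.
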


\begin{center}
\begin{table}
   \begin{tabular}{ | l ||  c |}
     \hline
Type & $h_{P}(u,v)$ \\
\hline
$I_0$  & 1 \\
\hline
$I_0^*$ 		& 1 \\
\hline
$II$, $II^*$	& $\prod \limits_{\underset{p^2\mid P(u,v)}{p\nmid\delta}} 
\begin{cases}
\left(\frac{-3}{p}\right) & \nu_p(P(u,v))\equiv2,4\mod6 \\
+1 & \text{otherwise.}
\end{cases}$ \\
     \hline
$III$, $III^*$	& $\prod \limits_{\underset{p^2\vert P(u,v)}{p\nmid\delta}} 
\begin{cases}
\left(\frac{-1}{p}\right) & \nu_p(P(u,v))\equiv2\mod4\\
+1 & \text{otherwise.}
\end{cases}$ \\
\hline
$IV$, $IV^*$	& $\prod \limits_{\underset{p^2\vert P(u,v)}{p\nmid\delta}} 
\begin{cases}
\left(\frac{-3}{p}\right) & \nu_p(P(u,v))\equiv2,3,4\mod6\\
+1 & \text{otherwise.}
\end{cases}$ \\

\hline
$I_m^*$  ($m\geq1$) 	& $\prod \limits_{\underset{p^2\vert P(u,v)}{p\nmid\delta}} \left(-\left(\frac{-\overline{c_6}(u,v)_{(p)}}{p}\right)\right)^{\nu_p(P(u,v))-1}$ \\
\hline
$I_m$ ($m\geq1$)  	& $\prod \limits_{\underset{p^2\vert P(u,v)}{p\nmid\delta}} \left(-\left(\frac{-\overline{c_6}(u,v)_{(p)}}{p}\right)\right)^{\nu_p(P(u,v))-1}$ \\ 
 &\\ \hline
\end{tabular}
\caption{\label{contributions}Corrective functions $h_P$ of Theorem \ref{formuledusigne} 
}
\end{table}
 \end{center}

\begin{proof}
%
To complete the proof, we simply need to find the expression of the corrective functions $h_{P}$.
We compute it according to the reduction of $\mathscr{E}_t$ at $p$ such that $p\nmid\delta$ and $p\mid P(u,v)$ and the monodromy given by Lemma \ref{lemmemonodromie}.

\textbf{Case 1}: Suppose the reduction of $\E$ has type $I_0$ at $P$. Then, the reduction of $\E_t$ at $p$ has also type $I_0$.
In this case: 
\begin{displaymath}
\W_{P}(u,v)=\prod \limits_{\underset{p\mid P(u,v)}{p\nmid\delta}}  {W_p(\mathscr{E}_t)}=1,
\end{displaymath}
for all $(u,v)\in\Z\times\Z_{>0}$ coprime.

\textbf{Case 2}: Suppose the reduction at $P$ has type $I_0^*$. Then the reduction type at $p$ depends on the parity of $\nu_p(P(u,v))$.
In this case:
\begin{align*}
\W_{P}(u,v)&=\prod_{ p\nmid\delta \atop p\mid P(u,v)} 
\begin{cases}
1 & \text{if } \nu_p(P(u,v))\text{ even and non-zero (type $I_o$),}\\
\left(\frac{-1}{p}\right) & \text{if }\nu_p(P(u,v))\text{ odd (type $I_o^*$);}\\
\end{cases}\\
&=\prod \limits_{\underset{p\mid P(u,v)}{p\nmid\delta}} 
\left(\frac{-1}{p}\right)^{\nu_p(P(u,v))}\\
&=\left(\frac{-1}{P(u,v)}\right)_{\delta},
\end{align*}
for all $(u,v)\in\Z\times\Z_{>0}$ coprime.

\textbf{Case 3}: Suppose the reduction at $P$ has type $II$ or $II^*$, then the reduction type at $p$ depends of $\nu_p(P(u,v))\mod6$.
In this case: \begin{align*}
\W_{P}(u,v)&=\prod_{p\mid\delta \atop p\mid P(u,v)}{
\begin{cases}
\left(\frac{-1}{p}\right) &  \text{if }\nu_p(P(u,v))\equiv 1,5\mod6\text{ (type $II$ or $II^*$),}\\
\left(\frac{-3}{p}\right) &  \text{if }\nu_p(P(u,v))\equiv 2,4\mod6 \text{ (type $IV$ or $IV^*$),} \\
\left(\frac{-1}{p}\right) &  \text{if }\nu_p(P(u,v))\equiv 3\mod6\text{ (type $I_0^*$)}\\
1 & \text{if }\nu_p(P(u,v))\equiv 0\mod6\text{ (type $I_0$);}\\
\end{cases}}\\
&=\left(\frac{-1}{P(u,v)}\right)_{\delta}\cdot\prod
\limits_{p\nmid\delta \atop \nu_p(P(u,v))\equiv2,4\mod6} \left(\frac{-3}{p}\right).\\
\end{align*}
\textbf{Case 4}: Suppose the reduction at $P$ has type $III$ or $III^*$, then the reduction type at $p$ depends on $\nu_p(P(u,v))\mod4$.
In this case: \begin{align*}
\W_{P}(u,v)&=\prod \limits_{\underset{p\mid P(u,v)}{p\nmid\delta}} 
\begin{cases}
\left(\frac{-2}{p}\right) &  \text{if }\nu_p(P(u,v))\equiv 1,3\mod4\text{ (type $III$ or $III^*$),}\\
\left(\frac{-1}{p}\right) &  \text{if }\nu_p(P(u,v))\equiv 2\mod 4 \text{ (type $I_0^*$)}, \\
1 & \text{if } \nu_p(P(u,v))\equiv 0\mod6\text{ (type $I_0$)}\\
\end{cases}
\\
&=\left(\frac{-2}{P(u,v)}\right)_{\delta}\cdot\prod \limits_{\underset{\nu_p(P(u,v))\equiv2\mod4}{p\nmid\delta}} 
\left(\frac{-1}{p}\right).
\end{align*}
\textbf{Case 5}: Suppose the reduction at $P$ has type $IV$ or $IV^*$, then the type de reduction at $p$ depends on $\nu_p(P(u,v))\mod3$.
In this case:
\begin{align*}
\W_{P}(u,v)&=\prod \limits_{\underset{p\mid P(u,v)}{p\nmid\delta}} 
\begin{cases}
\left(\frac{-3}{p}\right) & \text{if } \nu_p(P(u,v))\equiv 1,2\mod3\text{ (type $IV$ or $IV^*$),}\\
1 & \text{if } \nu_p(P(u,v))\equiv 0\mod3\text{ (type $I_0$)}\\
\end{cases}
\\
&=\left(\frac{-3}{P(u,v)}\right)_{\delta}\cdot\prod \limits_{\underset{\nu_p(P(u,v))\equiv2,3,4\mod6}{p\nmid\delta}} 
\left(\frac{-1}{p}\right)
\end{align*}
\textbf{Case 6}: Suppose the reduction at $P$ has type $I_v^*$. Then the reduction type at $p$ depends on the parity of $\lambda$.
We get the formula:
\begin{align*}
\W_{P}(u,v)&=\prod \limits_{\underset{p\mid P(u,v)}{p\nmid\delta}} 
\begin{cases}
-\left(\frac{-\overline{c_6}(u,v)_{(p)}}{p}\right) &\text{if } \nu_p(P(u,v))\text{ even (type $I_m$),}\\
\left(\frac{-1}{p}\right) & \text{if }\nu_p(P(u,v))\text{ odd (type $I_m^*$)}\\
\end{cases}
\\
&=\left(\frac{-1}{P(u,v)}\right)_{\delta}\cdot\prod \limits_{\underset{p^2\vert P(u,v)}{p\nmid\delta}} 
\left(-\left(\frac{-\overline{c_6}(u,v)_{(p)}}{p}\right)\right)^{\nu_p(P(u,v))-1}.\end{align*}
\textbf{Case 7}: Suppose the reduction at $P$ is multiplicative, then by a similar reasoning, the reduction at $p$ is also multiplicative.
In this case:
\begin{align*}
\W_{P}(u,v)=&\prod \limits_{\underset{p\mid P(u,v)}{p\nmid\delta}} 
-\left(\frac{-\overline{c_6}(u,v)_{(p)}}{p}\right)\nonumber\\
= &\prod \limits_{\underset{p\mid P(u,v)}{p\nmid\delta}} 
-\left(\frac{-\overline{c_6}(u,v)_{(p)}}{p}\right)^{\nu_p(P(u,v))}\cdot\prod \limits_{\underset{p^2\vert P(u,v)}{p\nmid\delta}} 
-\left(\frac{-\overline{c_6}(u,v)_{(p)}}{p}\right)^{\nu_p(P(u,v))-1}\nonumber\\
=&\left(\frac{-\overline{c_6}(u,v)}{P(u,v)}\right)_{\delta} \cdot \prod \limits_{\underset{p\mid P(u,v)}{p\nmid\delta}} 
(-1)^{\nu_p(P(u,v))} 
\cdot\prod \limits_{\underset{p^2\vert P(u,v)}{p\nmid\delta}} 
-\left(\frac{-\overline{c_6}(u,v)_{(p)}}{p}\right)^{\nu_p(P(u,v))-1} \nonumber\\
=&\lambda(P(u,v))\left(\prod_{p\mid\delta}{(-1)^{\nu_p(P(u,v))}}\right)\cdot\left(\frac{-\overline{c_6}(u,v)}{P(u,v)}\right)_{\delta}
\cdot\prod \limits_{\underset{p^2\vert P(u,v)}{p\nmid\delta}} 
-\left(\frac{-\overline{c_6}(u,v)_{(p)}}{p}\right)^{\nu_p(P(u,v))-1},\nonumber\\
\end{align*}
for any pair $(u,v)\in\mathbb{Z}\times\Z_{>0}$ of coprime integers.

This gives us the decomposition of the theorem.
\end{proof}

\section{Variation of the different components of the root number}\label{sectionvariation}

\subsection{The function $\prod_{p\mid\delta}{W_p(\E_t)}\prod_{P\mid\Delta}{g_{P}}(u,v)$}\label{constancelocaleg}

\begin{propo}\label{existencedunM}
Let $\E$ be an elliptic surface. Let $\delta$ be the integer defined in section \ref{monodromie}. 

Then, there exists an integer $N_\E\in\N^*$ and a non-zero homogeneous polynomial $R_\E\in\Z[U,V]$ such that the function $\varphi_\E:\Q\rightarrow\{-1,+1\}$ defined as \[t=\frac{u}{v}\mapsto \varphi_{\E}(t):=\prod_{p\mid\delta}{W_p(\E_{t})}\prod_{P\in\mathscr{B}}{g_{P}(u,v)}\] is such that $\varphi_\E(t)=\varphi_\E(t')$ for every $t,t'\in\Q$ such that the associated pairs of coprime integers $(u,v),(u',v')\in\Z\times\Z_{>0}$ (as in Section \ref{t=xy}) are such that $(u,v)\equiv (u',v')\mod N_\E$ and such that for every irreducible factor $R_i$ of $R$, one has $R_i(u,v)$ of a given sign $\epsilon_i\in\{\pm1\}$.

Thus $\varphi_\E$ is locally constant\footnote{In the sense of Definition \ref{localconstancy}.} on connected components of $\R^2-\{(u,v)\in\R^2 \ \vert \ R(u,v)=0\}$.
\end{propo}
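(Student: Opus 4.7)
My plan is to decompose $\varphi_\E$ into its three types of factors --- the local root numbers $W_p(\E_t)$ at primes $p\mid\delta$, the symbols $g_P$ attached to places of additive reduction, and the symbols $g_P$ attached to places of multiplicative reduction --- and to verify the desired local constancy for each piece separately.

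For the factors $W_p(\E_t)$ with $p\mid\delta$, Lemma \ref{localrootlemma} says that $t\mapsto W_p(\E_t)$ is $p$-adically locally constant on $U_\E(\Q_p)$. Since the coefficients of the integral model $\E_{u,v}$ are polynomial in $(u,v)$, one obtains an exponent $\alpha_p\geq 1$ such that $W_p(\E_{u,v})$ depends only on $(u,v)\bmod p^{\alpha_p}$. Setting $N_1=\prod_{p\mid\delta}p^{\alpha_p}$ then makes $\prod_{p\mid\delta}W_p(\E_{u,v})$ constant on residue classes modulo $N_1$, with no sign condition required.

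For each $P\in\mathscr{B}$ of additive type, $g_P(u,v)=\left(\frac{-\varepsilon_P}{P(u,v)}\right)_\delta$ with $\varepsilon_P\in\{1,2,3\}$. Unfolding the definition of $(\cdot/\cdot)_\delta$ and using that $\left(\frac{-\varepsilon_P}{\,\cdot\,}\right)$ is a Dirichlet character of conductor dividing $4|\varepsilon_P|$, this quantity is controlled by $P(u,v)\bmod 4|\varepsilon_P|$ and by the sign of $P(u,v)$, up to a correction depending only on $(u,v)\bmod\delta$. Both conditions are locally constant modulo some $N_P$ and on the connected components of $\R^2\setminus\{P=0\}$, so this factor contributes $N_P$ to the final modulus and the irreducible $P$ itself to $R_\E$.

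The main difficulty is the multiplicative case:
\[g_P(u,v)=\left(\frac{-\overline{c_6}(u,v)}{P(u,v)}\right)_\delta\cdot\prod_{p\mid\delta}(-1)^{\nu_p(P(u,v))}.\]
The second factor is locally constant modulo a sufficiently high power of $\delta$, since for coprime $(u,v)$ the valuation $\nu_p(P(u,v))$ stabilises once $(u,v)$ is known modulo a large enough power of $p$. For the Jacobi-type symbol, I would factor $-\overline{c_6}(U,V)=\eta\prod_j Q_j(U,V)^{m_j}$ into primitive irreducibles in $\Z[U,V]$ and split the symbol by multiplicativity. The constant factor $\left(\frac{\eta}{P(u,v)}\right)_\delta$ is handled exactly as in the additive case. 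Each $\left(\frac{Q_j(u,v)}{P(u,v)}\right)_\delta$ I would attack by iterated quadratic reciprocity for the symbol $(\cdot/\cdot)_\delta$ (Appendix \ref{quadraticsymbol}): each flip between numerator and denominator introduces a correction depending only on residues modulo $4$ and on the signs of the two polynomials at $(u,v)$, after which a Euclidean reduction of the numerator modulo the denominator strictly decreases the degree of the numerator. After finitely many such steps --- absorbing the resultants of intermediate polynomials into $\delta$ to control common small-prime factors --- the Jacobi symbol reduces to $\pm 1$, leaving only finite congruence data and sign-of-polynomial conditions. Taking $N_\E$ to be the least common multiple of all moduli produced and $R_\E$ the product of all homogeneous polynomials whose signs are required (the factors of $\Delta_\E$ together with the intermediate polynomials introduced during the reciprocity reductions) finishes the proof. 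The main obstacle is the precise book-keeping of the correction factors at every reciprocity step and at every passage between $(\cdot/\cdot)_\delta$ and the classical Jacobi symbol, ensuring that at every stage they depend only on finite congruence data and on signs of explicit polynomials in $(u,v)$.
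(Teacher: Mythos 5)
Your overall route coincides with the paper's: the same three-way decomposition, the treatment of $\prod_{p\mid\delta}W_p$ via Lemma \ref{localrootlemma} and exponents $\alpha_p$, the additive case via the conductor of $\left(\frac{-\varepsilon_P}{\cdot}\right)$ together with the sign of $P(u,v)$, and an iterated reciprocity-plus-Euclidean-reduction attack on the multiplicative symbols, which is exactly the content of Proposition \ref{adelicsymbol} and Lemma \ref{claim} in the appendix. There is, however, one genuine gap in your multiplicative case, and it sits precisely where the statement could actually fail rather than being mere book-keeping. The Euclidean reduction of $\left(\frac{-\overline{c_6}(u,v)}{P(u,v)}\right)_\delta$ does \emph{not} terminate in ``finite congruence data and sign-of-polynomial conditions'' alone: once one of the two entries has been reduced to degree $1$, say $bU$, the base case leaves a residual factor $\left(\frac{v}{u}\right)_\delta^{k}$, hence after one more flip a power of $\left(\frac{u}{v}\right)_\delta$; and for coprime $(u,v)$ this is a genuine Jacobi symbol $\left(\frac{u}{\vert v_{(\delta)}\vert}\right)$ which is \emph{not} locally constant in $(u,v)$ --- no congruence class modulo a fixed $N$ and no sign condition determines it. This is exactly why the paper points out that the analogous Corollary 5.3 of Helfgott's preprint is inaccurate, with counterexample $A=U$, $B=V$.

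The argument only closes because the total exponent of this residual symbol telescopes to $\deg(-\overline{c_6})\cdot\deg P$ (the computation at the end of the proof of Lemma \ref{claim}), and $\overline{c_6}(U,V)=V^{6k}c_6(U/V)$ is homogeneous of even degree $6k$, so the offending factor appears an even number of times and cancels. You need to add this parity verification explicitly; without it the reduction does not prove local constancy, and the corresponding statement for a symbol $\left(\frac{A(u,v)}{B(u,v)}\right)_\delta$ with $\deg A$ odd is simply false. A second, smaller imprecision: the reciprocity correction at each flip is a product of Hilbert symbols at the primes $p\mid\delta$ (which depend on congruences modulo powers of those primes, not just modulo $4$) and at $\infty$ (which gives the sign conditions), plus Legendre factors supported on $\gcd$ of the two entries; your device of absorbing resultants into $\delta$ does take care of the latter, so this part is fine once stated carefully.
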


\begin{proof}
Let $\E$ be an elliptic surface and let $P\in\mathscr{B}$ be a polynomial associated to a place of bad reduction on $\E$.

When the reduction of $\E$ at $P$ is not multiplicative,
we have \[g_{P}(u,v)=\Big(\frac{\varepsilon_P}{P(u,v)}\Big)_\delta\] where \[\varepsilon_P=\begin{cases}-1&\text{if $P$ has type $II$, $II^*$, $I_0^*$ or $I_m^*$};\\
-2&\text{if $P$ has type $III$, $III^*$};\\
-3&\text{if $P$ has type $IV$ or $IV^*$,}\end{cases}\]
which depends respectively of the value of $P(u,v)_{(\delta)}$ modulo 4, 8 and 12 and on the sign of $P(u,v)$.

In the case where the surface has multiplicative reduction at $P$,
we have \[g_{P}(u,v)=\left(\frac{-\overline{c_6}(u,v)}{P(u,v)}\right)_\delta\prod_{p\mid\delta}{(-1)^{\omega(P(u,v))}}.\] For those, Proposition \ref{adelicsymbol} guaranties the existence of integers $N_P$ and homogeneous polynomials $R_P(U,V)$ such that $g_{P}(u,v)=g_{P}(u',v')$ for all $(u,v),(u',v')\in\Z\times\Z$ such that $(u,v)\equiv(u',v')\mod N_P$ in a connected component of $\R^2-\{(u,v)\in\R^2 \ \vert \ R(u,v)=0\}$.
Thus, $\prod_{P\mid\Delta_\E}{g_{P}}$ is constant on the intersection of a class of $(u,v)$ modulo $24\prod_{P\in\mathscr{M}}{N_P}$ and a connected component.

We conclude the proof by observing that the local root numbers at $p\mid \delta$ of the fibres are locally constant for the $p$-adic topology (Lemma \ref{localrootlemma}). For each $p\mid\delta$, let $\alpha_p$ be the smallest integer $\alpha_p$ such that $W_p(E_t)=W_p(E_{t'})$ if  and only if $t\equiv t'\mod p^{\alpha_p}$.
The function $\varphi_\E$ is thus constant on the intersection of the double congruence classes modulo \[N_\E=24\prod_{p\mid\delta}{p^{\alpha_p}}\prod_{P\in\mathscr{M}}{N_P},\]
and a connected component of $\R^2-\{(u,v)\in\R^2 \ \vert \ R(u,v)=0\}$.
\end{proof}

\subsection{The function $h_{P}$ when the reduction at $P$ has type $II$, $II^*$, $IV$ or $IV^*$}

\begin{lm}\label{constance24}
Let $P\in\Z[U,V]$ be a polynomial associated to a place of type $II$, $II^*$ $IV$ or $IV^*$. We assume that \[\mu_3\subseteq \Q[T]/P(T,1),\] where $\mu_3$ is the group of third roots of unity.

Then for all $t\in\Q$ and the associated pair of coprime integers $(u,v)\in\Z\times\Z_{>0}$ one has $h_{P}(u,v)=+1$.
\end{lm}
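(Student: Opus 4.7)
The plan is to reduce the lemma to an easily checkable statement about $-3$ being a square modulo $p$. Inspecting Table~\ref{contributions}, for each of the types $II$, $II^*$, $IV$, $IV^*$, the function $h_P(u,v)$ is a finite product of factors, each of which equals either $+1$ or the Jacobi symbol $\left(\frac{-3}{p}\right)$, indexed by primes $p\nmid\delta$ with $p^2\mid P(u,v)$. It therefore suffices to verify that $\left(\frac{-3}{p}\right)=+1$ for every prime $p\nmid\delta$ dividing $P(u,v)$ with $\gcd(u,v)=1$, i.e.\ that every such $p$ satisfies $p\equiv 1\pmod 3$.

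To do this, I would translate the field-theoretic hypothesis $\mu_3\subseteq K:=\Q[T]/P(T,1)$ into an elementary polynomial identity. Let $\alpha$ denote the image of $T$ in $K$; by hypothesis there exist $\psi\in\Z[T]$ of degree $<\deg P$ and $c\in\Z\setminus\{0\}$ such that $\sqrt{-3}=\psi(\alpha)/c$, which gives the divisibility $P(T,1)\mid \psi(T)^2+3c^2$ in $\Q[T]$ and, by Gauss's lemma (since $P(T,1)$ is monic in $\Z[T]$, as $P(U,V)$ is primitive and monic in $U$), also in $\Z[T]$. Now fix a prime $p\nmid\delta$ dividing $P(u,v)$: by monicity and coprimality we have $p\nmid v$, so $t_0:=uv^{-1}\in\mathbb{F}_p$ is a root of $P(T,1)\bmod p$. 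Evaluating the divisibility at $T=t_0$ yields $\psi(t_0)^2\equiv -3c^2\pmod p$, and provided $p\nmid c$, dividing by $c^2$ exhibits $-3$ as a square in $\mathbb{F}_p$, whence $\left(\frac{-3}{p}\right)=+1$ as required.

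The main (and only) obstacle is the finite set of exceptional primes dividing $c$, for which the above evaluation does not apply. I would handle these by absorbing the factor $c$ into the integer $\delta$ of Section~\ref{monodromie}: the formula of Theorem~\ref{formuledusigne} remains valid when $\delta$ is enlarged by any finite product of primes, since such primes simply migrate from the $h_P$ factors into the $\prod_{p\mid\delta}W_p(\E_{u,v})$ factor. With this harmless enlargement in place, every factor in the product defining $h_P(u,v)$ equals $+1$, which proves $h_P(u,v)=+1$ for all coprime $(u,v)\in\Z\times\Z_{>0}$.
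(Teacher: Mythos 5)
Your argument is correct and arrives at the same reduction as the paper — namely that it suffices to show $\left(\frac{-3}{p}\right)=+1$ for every prime $p\nmid\delta$ dividing $P(u,v)$ with $\gcd(u,v)=1$ — but you implement that step by a genuinely different (and more elementary) route. The paper deduces the implication ``$p\mid P(u,v)\Rightarrow p\equiv1\bmod 3$'' from the inclusion $\Q(\mu_3)\subseteq K$ via the degree-one-prime mechanism behind Bauer's theorem (quoted in the remark following the lemma): a root of $P(T,1)$ mod $p$ yields a degree-one prime of $K$ above $p$, whose restriction forces $p$ to split in $\Q(\mu_3)$. You instead encode the inclusion as an integral identity $P(T,1)\mid\psi(T)^2+3c^2$ (legitimate by Gauss's lemma since $P(T,1)$ is monic) and evaluate it at the root $uv^{-1}\bmod p$, exhibiting $-3$ directly as a square mod $p$. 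The two arguments are essentially dual, but yours has the merit of making visible the finite set of exceptional primes — those dividing $c$, i.e.\ essentially the index of $\Z[T]/P(T,1)$ in the ring of integers of $K$ — which the paper's proof silently ignores, and which is a genuine issue: for such a prime the implication can fail. For instance $P(T,1)=T^2+75$ satisfies $\mu_3\subseteq\Q[T]/P=\Q(\sqrt{-3})$, yet $P(5,1)=100$ with $\nu_5=2$ and $\left(\frac{-3}{5}\right)=-1$, so the corresponding factor of $h_P(5,1)$ would be $-1$ unless $5\mid\delta$. Your repair — absorbing these finitely many primes into $\delta$ — is exactly right and is harmless for the rest of the paper, since the decomposition of Theorem \ref{formuledusigne} and the local-constancy statement of Proposition \ref{existencedunM} are insensitive to enlarging $\delta$ by finitely many primes; just be explicit that this modifies the function $h_P$ itself (which depends on $\delta$), so that what you prove is the lemma for the enlarged $\delta$, which is what the applications in Section \ref{sectionvariation} actually require.
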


\begin{proof} 

Let $P(T)\in\Z[T]$ be an irreducible polynomial, and let be $K=\Q[T]/P(T)$. Let $\mu_3$ be the group of third roots of unity, and suppose $\Q(\mu_3)\subseteq K$. Then 
, for every pair $(u,v)$ of coprime integers, there does not exist prime numbers $p\equiv2\mod3$ dividing $P(u,v)$. In other words, if $p\mid P(u,v)$ then $p\equiv 1\mod3$.

We have thus \begin{align*}
h_{P}(u,v) & =\prod \limits_{\underset{p^2\vert P(u,v)}{p\nmid\delta}} 
\begin{cases}
\Big(\frac{-3}{p}\Big) & \text{if }\nu_p(P(u,v))\equiv2,4\mod6\\
+1 & \text{otherwise.}
\end{cases}\\
&=+1.\\
\end{align*}
\end{proof}

\begin{rem}
One can easily give examples of homogeneous polynomials $P\in\Z[U,V]$ satisfying the hypothesis $\mu_3\subseteq\Q[T]/(P_i(T,1))$ for every irreducible factors $P_i$ of $P$. We have in particular those of the form \[P(U,V)=3A(U,V)^2+B(U,V)^2,\]
where $A(U,V),B(U,V)\in\Z[U,V]^*$ are coprime.
This polynomial is not necessarily irreducible in general, but for 
every irreducible factor $P_i$ and the corresponding field $K_i=\Q[T]/(P_i(T,1))=\Q(\alpha_i)$ where $\alpha_i$ is a root of $P_i$, one has $3A(\alpha_i,1)^2+B(\alpha_i,1)^2=0$, thus $-3=(B(\alpha_i)A(\alpha_i)^{-1})^2$ and this proves that $\mu_3\subset K_i$.

\end{rem}

\begin{rem}
The proof is inspired by a more general result due to Bauer (see \cite[p.548]{Neukirch}) which states that for $L/K$ a Galois extension and $M/K$ a finite extension of $L$, one has
\[P(L/K)\supseteq P(M/K)\Leftrightarrow L\subseteq M,\]
where $P(L/K):=\{p\text{ prime of } K \ \vert \ \exists\mathfrak{p}\text{ prime of $L$ of degree 1 lying over $p$}\}$.
\end{rem}

\subsection{The function $h_{P}$ when the reduction at $P$ has type $III$ or $III^*$}

\begin{lm}\label{constance3}
Let $P\in\Z[U,V]$ be an irreducible polynomial associated to a place of type $II$, $II^*$, $IV$ or $IV^*$. Assume that \[\mu_4\subseteq \Q[T]/P(T,1),\] where $\mu_4$ is the group of the fourth roots of unity.

Then for all $t\in\Q$ and its associated pair of coprime integers $(u,v)\in\Z\times\Z_{>0}$ one has $h_{P}(u,v)=+1$.
\end{lm}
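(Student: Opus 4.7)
The plan is to adapt the proof of Lemma \ref{constance24} almost verbatim, replacing $\mu_3$ (equivalently $\Q(\sqrt{-3})$) by $\mu_4$ (equivalently $\Q(i)$). (I also read the statement as "$III$ or $III^*$" rather than "$II,II^*,IV,IV^*$", which is surely a typo, since that is the case left untreated and the formula of $h_{P}$ appearing in Table \ref{contributions} for types $III,III^*$ involves precisely the symbol $\left(\frac{-1}{p}\right)$.)

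The key arithmetic fact to establish is the following: for every $(u,v)\in\Z\times\Z_{>0}$ coprime and every prime $p\nmid\delta$ dividing $P(u,v)$, one has $p\equiv 1\pmod 4$. I would argue it exactly as in the previous lemma. Write $K:=\Q[T]/P(T,1)$ and suppose $\mu_4\subseteq K$. Since $(u,v)$ is coprime, we may (after swapping roles if needed) assume $p\nmid v$; then $u/v\bmod p$ is a root of $P(T,1)\bmod p$, giving a prime $\mathfrak{p}$ of $K$ above $p$ of residue degree $1$. Because $\Q(i)\subseteq K$, the prime $\mathfrak{p}$ sits above some prime $\mathfrak{q}$ of $\Q(i)$ with $\F_p\supseteq\O_{\Q(i)}/\mathfrak{q}$; this forces $\mathfrak{q}$ to have residue degree $1$ as well, so $p$ splits in $\Q(i)$, i.e. $p\equiv 1\pmod 4$ (the excluded primes $p=2$ and the ramified primes being contained in $\delta$, so the argument is unaffected for $p\nmid\delta$).

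Once this congruence is known, I would simply read off the formula from Table \ref{contributions}:
\[
h_{P}(u,v)=\prod_{\substack{p\nmid\delta\\ p^2\mid P(u,v)}}
\begin{cases}
\left(\dfrac{-1}{p}\right) & \text{if }\nu_p(P(u,v))\equiv 2\pmod 4,\\
+1 & \text{otherwise.}
\end{cases}
\]
For every prime $p$ appearing in this product, $p\equiv 1\pmod 4$ and hence $\left(\frac{-1}{p}\right)=+1$; thus every factor equals $+1$, and $h_{P}(u,v)=+1$, as required.

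I do not foresee any serious obstacle: the only point to double-check is that the splitting argument in $\Q(i)$ is legitimate for the primes that actually appear in the product (i.e. $p\nmid\delta$). Since $\delta$ was set up in Section \ref{monodromie} to absorb all small/exceptional primes (in particular $2$, $3$, and those ramifying in the relevant extensions, which includes the ramified prime $2$ of $\Q(i)$), the argument goes through without modification. After the proof, I would include a remark analogous to the one following Lemma \ref{constance24}, noting that polynomials of the shape $P(U,V)=A(U,V)^2+B(U,V)^2$ with $A,B\in\Z[U,V]$ coprime give concrete instances in which the hypothesis $\mu_4\subseteq\Q[T]/P_i(T,1)$ is automatically satisfied for every irreducible factor $P_i$ of $P$.
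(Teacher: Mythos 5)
Your proposal is correct and follows exactly the route the paper intends: the paper's own proof of this lemma is literally ``Similar to Lemma \ref{constance24}'', i.e.\ show that every prime $p\nmid\delta$ dividing $P(u,v)$ splits in $\Q(\mu_4)=\Q(i)$, hence $p\equiv1\bmod4$ and $\left(\frac{-1}{p}\right)=+1$, so every factor in the Table \ref{contributions} formula for types $III$, $III^*$ is trivial. You are also right that the list of reduction types in the statement is a typo for $III$, $III^*$.
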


\begin{proof}
Similar to Lemma \ref{constance24}.
\end{proof}

\begin{rem}
One can easily give examples of polynomials satisfying the hypothesis $\mu_4\subseteq\Q[T]/(P_i(T,1))$ for every factor $P_i$; in particular those of the form \[P(U,V)=A(U,V)^2+B(U,V)^2,\]
where $A(U,V),B(U,V)\in\Z[U,V]^*$ are coprime.
\end{rem}

\subsection{The function $h_{P}$ when the reduction at $P$ has type $I_m^*$ or $I_m$}

First, the following lemma gives a criterion to fix the values of the function $h_{P}$.

\begin{lm}\label{constancehpotmult}
Let $\E$ be an elliptic surface which admits a place of type $I_m^*$ or $I_m$ 
whose associated polynomial is $Q$.  Let $N_\E$ be the integer and $R_\E$ be the homogeneous polynomial given by the Proposition \ref{existencedunM}.

Let $(u,v)$ and $(u',v')$ be pairs of integers such that $(u,v)\equiv(u',v')\mod N_\E$ and that $\forall R_i$ primitive factor of $R$ one has $R_i(u,v)>0$. For these pairs, we denote $\alpha:=Q(u,v)$ and $\beta:=Q(u',v')$.

Suppose moreover that we have 
\begin{enumerate}[(1)]
\item $\alpha=c^2l$, where $l$ is squarefree and $\gcd(c,l)=1$,
\item $\beta=c^2\eta$, where $\eta$ is squarefree, $\gcd(c,\eta)=1$,
\end{enumerate}

Then $h_{Q}(u,v)=h_{Q}(u',v')$.
\end{lm}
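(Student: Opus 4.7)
The plan is to compare the two expressions for $h_Q$ coming from Table \ref{contributions} directly, and to reduce the desired equality to an identity between quadratic symbols that follows from reciprocity and the congruence $(u,v)\equiv(u',v')\mod N_\E$. First, the hypotheses $\alpha=c^2l$ and $\beta=c^2\eta$ (with $l,\eta$ squarefree and coprime to $c$) imply that both index sets $\{p\nmid\delta:\,p^2\mid Q(u,v)\}$ and $\{p\nmid\delta:\,p^2\mid Q(u',v')\}$ coincide with $\mathscr{P}:=\{p\mid c:\,p\nmid\delta\}$, and that $\nu_p(Q(u,v))=\nu_p(Q(u',v'))=2\nu_p(c)$ for $p\in\mathscr{P}$. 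Each exponent $2\nu_p(c)-1$ is odd, so both $h_Q(u,v)$ and $h_Q(u',v')$ simplify (up to the common sign $(-1)^{|\mathscr{P}|}$) to $\prod_{p\in\mathscr{P}}\bigl(\tfrac{-\overline{c_6}(\cdot,\cdot)_{(p)}}{p}\bigr)$, and it suffices to show equality of these two products.

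Next I would compute each Legendre symbol using the factorization $\overline{c_6}(U,V)=Q(U,V)^{\mathrm{w}_Q(\overline{c_6})}\,\tilde{c_6}(U,V)$ with $Q\nmid\tilde{c_6}$. For type $I_m$, $\mathrm{w}_Q(\overline{c_6})=0$ gives $\overline{c_6}(u,v)_{(p)}=\overline{c_6}(u,v)$; for type $I_m^*$, $\mathrm{w}_Q(\overline{c_6})=3$ gives $\overline{c_6}(u,v)_{(p)}=\bigl(c/p^{\nu_p(c)}\bigr)^{6}\,l^3\,\tilde{c_6}(u,v)$, whose Legendre symbol reduces to $\bigl(\tfrac{-l\,\tilde{c_6}(u,v)}{p}\bigr)$. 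Using notation (\ref{quadraticsymboldef}), the product over $\mathscr{P}$ rewrites as the quadratic symbol $\bigl(\tfrac{-\overline{c_6}(u,v)}{c^{\sharp}}\bigr)_{\delta}$ in the $I_m$ case, and as $\bigl(\tfrac{-l\,\tilde{c_6}(u,v)}{c^{\sharp}}\bigr)_{\delta}$ in the $I_m^*$ case, where $c^{\sharp}$ denotes the radical of $c$ with the prime factors of $\delta$ removed.

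I would then apply the reciprocity formulas for the symbol $(\cdot|\cdot)_\delta$ collected in the appendix to flip numerator and denominator. The flipped symbol depends on $c^{\sharp}$ only through its residue modulo the prime divisors of $-\overline{c_6}(u,v)$ (or of $-l\,\tilde{c_6}(u,v)$), together with a sign correction determined by residues modulo $8$. By enlarging $N_\E$ so as to be divisible by $8$ and by the auxiliary moduli required by reciprocity, the congruence $(u,v)\equiv(u',v')\mod N_\E$ forces $\overline{c_6}(u,v)\equiv\overline{c_6}(u',v')\mod N_\E$, $\tilde{c_6}(u,v)\equiv\tilde{c_6}(u',v')\mod N_\E$, and $\alpha\equiv\beta\mod N_\E$, so the flipped expressions are seen to coincide.

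The principal obstacle lies in controlling the flipped symbol at the prime factors of $\overline{c_6}(u,v)$ (respectively $l\,\tilde{c_6}(u,v)$) that do not divide $N_\E$. To overcome this, one exploits that $c$, and hence $c^{\sharp}$, is common to both pairs, and that $\nu_p(Q(u,v))=\nu_p(Q(u',v'))$ at every $p\in\mathscr{P}$; combined with $\alpha\equiv\beta\mod N_\E$ (and, for the $I_m^*$ case, the identities $l\equiv\alpha/c^{2}$ and $\eta\equiv\beta/c^{2}$) this reduces the comparison to residue data which is locally constant modulo $N_\E$. The sign-component hypothesis on the $R_i$ fixes the signs of $-\overline{c_6}(u,v)$ and $-\overline{c_6}(u',v')$, ensuring that the archimedean part of the reciprocity correction agrees for the two pairs.
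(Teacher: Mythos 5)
Your opening reduction is correct and coincides with the paper's: the hypotheses $\alpha=c^2l$ and $\beta=c^2\eta$ identify both index sets with $\{p\mid c:\ p\nmid\delta\}$ and both exponents with $2\nu_p(c)-1$, so the claim reduces to
\[\prod_{p\mid c,\ p\nmid\delta}\left(\frac{-\overline{c_6}(u,v)_{(p)}}{p}\right)=\prod_{p\mid c,\ p\nmid\delta}\left(\frac{-\overline{c_6}(u',v')_{(p)}}{p}\right).\]
From this point the paper uses no reciprocity at all: it compares the two products factor by factor, invoking a congruence $\overline{c_6}(u,v)\equiv\overline{c_6}(u',v')\bmod p$ at each relevant prime $p$ (attributed to Proposition \ref{existencedunM} together with the congruence and sign hypotheses), which immediately forces the corresponding Legendre symbols to agree.

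Your detour through the symbol $(\cdot\mid\cdot)_\delta$ and reciprocity does not close the argument; it only relocates the difficulty, and the relocated difficulty is then asserted rather than proved. The troublesome primes are exactly the $p\mid c$ with $p\nmid N_\E$: nothing in the hypotheses bounds $c$ or forces its prime factors to divide $N_\E$ (in the applications $c$ is produced by a sieve and can well be a large prime), and $N_\E$ is fixed by Proposition \ref{existencedunM} before $c$ is known, so ``enlarging $N_\E$'' is not available. After you flip $\bigl(\frac{-\overline{c_6}(u,v)}{c^{\sharp}}\bigr)_\delta$ you must compare two Jacobi symbols with the same numerator $c^{\sharp}$ but different denominators $\vert\overline{c_6}(u,v)\vert$ and $\vert\overline{c_6}(u',v')\vert$; as a function of its denominator, $n\mapsto\bigl(\frac{c^{\sharp}}{n}\bigr)$ is periodic with period $4c^{\sharp}$, not $N_\E$, so the congruence $\overline{c_6}(u,v)\equiv\overline{c_6}(u',v')\bmod N_\E$ says nothing at the primes of $c^{\sharp}$ prime to $N_\E$. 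Your closing paragraph names this obstacle, but the proposed resolution (``$c^{\sharp}$ is common to both pairs\dots residue data which is locally constant modulo $N_\E$'') is precisely the statement that requires proof: you never exhibit a congruence relating the two values of $\overline{c_6}$ (or relating $c^{\sharp}$ to the two denominators) modulo those uncontrolled primes. What is needed at this point — and what the paper asserts directly — is an argument that $\bigl(\frac{-\overline{c_6}(u,v)_{(p)}}{p}\bigr)=\bigl(\frac{-\overline{c_6}(u',v')_{(p)}}{p}\bigr)$ for each individual $p\mid c$ with $p\nmid\delta$, e.g.\ by showing that $(u:v)$ and $(u':v')$ reduce to the same zero of $Q$ in $\mathbb{P}^1(\mathbb{F}_p)$, whence $(u,v)\equiv\lambda(u',v')\bmod p$ and, by homogeneity, $\overline{c_6}(u,v)\equiv\lambda^{6k}\overline{c_6}(u',v')\bmod p$ with $\lambda^{6k}$ a square. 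Without a step of this kind your proof is incomplete.
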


\begin{proof} We have
\[h_{Q}(u,v)=\prod \limits_{\underset{p^2\vert Q(u,v)}{p\nmid\delta}} \left(-\left(\frac{-\overline{c_6}(u,v)_{(p)}}{p}\right)\right)^{\nu_p(Q(u,v))-1}
=\prod \limits_{\underset{p^2\vert \alpha}{p\nmid\delta}} \left(-\left(\frac{-\overline{c_6}(u,v)_{(p)}}{p}\right)\right)^{\nu_p(\alpha)-1}\]
\[=\prod \limits_{\underset{p \mid c}{p\nmid\delta}} \left(-\left(\frac{-\overline{c_6}(u,v)_{(p)}}{p}\right)\right)^{2\nu_p(c)-1}=\prod \limits_{\underset{p^2\vert \beta}{p\nmid\delta}} \left(-\left(\frac{-\overline{c_6}(u,v)_{(p)}}{p}\right)\right)^{\nu_p(\beta)-1}\]
As $(u,v)\equiv(u',v')\mod N_\E$ and the two pairs are in the same connected component in $\R^2-\{(u,v) \ \vert \ R(u,v)=0\}$, then (by consequence of Proposition \ref{existencedunM}) for any $p\mid \alpha\beta$ and $p\nmid\delta$, we have $\overline{c_6}(u,v)\equiv \overline{c_6}(u',v')\mod p$. This implies that
\[h_{Q}(u,v)=\prod \limits_{\underset{p^2\vert Q(u',v')}{p\nmid\delta}} \left(-\left(\frac{-\overline{c_6}(u',v')_{(p)}}{p}\right)\right)^{\nu_p(Q(u',v'))-1}\]
The right hand side of this equality is exactly $h_{Q}(u',v')$.
\end{proof}

Now, we present a general result on values of polynomials which will allow us (when $\E$ has type $I_m^*$ at $Q$) to give a criterion on pairs $(u,v),(u',v')\in\Z\times\Z$ to be such that the function $h_{Q}$ take opposite values at those pairs.

\begin{lm} \label{variationpotmult} \cite[Lemma 2.3]{Manduchi} Let $Q(T)$ and $P(T)\in\Z[T]$ be such that $Q(T)$ is non-constant. Let $\mathrm{Res}(P,Q)$ be the resultant of $P$ and of $Q$, and let $\Delta_Q$, be the discriminant of $Q$. Suppose $\mathrm{Res}(P,Q)$ and $\Delta_Q$ are non-zero. Let $\mathscr{P}_0$ be a finite set of prime numbers.

Then there exists a prime number $p_0\not\in\mathscr{P}_0$ and $n$ a positive integer such that $p_0^2\mid Q(n)$ and $p_0^{-2}P(n)Q(n)\equiv1\mod p_0$.
In particular, $p_0^2\mid\mid Q(n)$ and $p_0\nmid P(n)$.
\end{lm}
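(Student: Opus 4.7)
The plan is to pick a large auxiliary prime $p_0 \notin \mathscr{P}_0$ at which $Q$ has a simple root, Hensel-lift that root to a root of $Q$ modulo $p_0^2$, and then fine-tune $n$ by a perturbation of the form $n \mapsto n + p_0^2 t$ so that the scaled value $Q(n)/p_0^2$ takes exactly the value $P(n)^{-1} \pmod{p_0}$. This will force $p_0^2 \Vert Q(n)$, $p_0 \nmid P(n)$, and the stronger congruence $p_0^{-2}P(n)Q(n) \equiv 1 \pmod{p_0}$ simultaneously.

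First I would invoke the classical theorem (due to Schur/Nagell) that for any non-constant $Q \in \mathbb{Z}[T]$, infinitely many primes $p$ divide some value $Q(n)$, and hence $Q$ has a root mod $p$. From this infinite set I discard the primes in $\mathscr{P}_0$ and those dividing $\Delta_Q \cdot \mathrm{Res}(P,Q) \cdot \ell(Q)$, where $\ell(Q)$ is the leading coefficient; by hypothesis, this excluded set is finite, so a prime $p_0$ with the desired properties remains. Pick a root $n_0 \in \mathbb{F}_{p_0}$ of $Q$; because $p_0 \nmid \Delta_Q$ we have $Q'(n_0) \not\equiv 0 \pmod{p_0}$, so Hensel's lemma produces an integer $n_1$ with $Q(n_1) \equiv 0 \pmod{p_0^2}$ and $Q'(n_1) \not\equiv 0 \pmod{p_0}$. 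Moreover, since $p_0 \nmid \mathrm{Res}(P,Q)$ and $Q(n_1) \equiv 0 \pmod{p_0}$, the classical resultant identity forces $P(n_1) \not\equiv 0 \pmod{p_0}$.

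Next I would set $n := n_1 + p_0^2 t$ and use the Taylor expansion
\[Q(n) = Q(n_1) + p_0^2 t\, Q'(n_1) + p_0^4 \cdot (\text{integer}),\]
which gives $Q(n)/p_0^2 \equiv Q(n_1)/p_0^2 + t\,Q'(n_1) \pmod{p_0}$. Since $Q'(n_1)$ is invertible mod $p_0$, as $t$ varies over $\mathbb{Z}$ this covers every residue class; in particular I can choose $t$ (and enlarge it further by a multiple of $p_0$ to guarantee $n > 0$) so that $Q(n)/p_0^2 \equiv P(n_1)^{-1} \pmod{p_0}$. The nonvanishing ensures $p_0^3 \nmid Q(n)$, i.e.\ $p_0^2 \Vert Q(n)$, while $P(n) \equiv P(n_1) \not\equiv 0 \pmod{p_0}$ is automatic; multiplying the two congruences yields the desired $p_0^{-2} P(n) Q(n) \equiv 1 \pmod{p_0}$. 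The only non-routine ingredient here is the Schur/Nagell result supplying the prime $p_0$; the rest is a Hensel lift plus a one-parameter linear adjustment, and the main thing to be careful about is keeping track of which finitely many "bad" primes must be excluded before making the choice.
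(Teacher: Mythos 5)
Your argument is correct and complete. The paper itself gives no proof of this lemma --- it simply refers to Manduchi's Lemma 2.3 --- and your chain of steps (Schur's theorem to find a prime $p_0\notin\mathscr{P}_0$ not dividing $\ell(Q)\,\Delta_Q\,\mathrm{Res}(P,Q)$, separability of $Q$ mod $p_0$ giving a simple root, a Hensel lift to a root mod $p_0^2$, the resultant identity forcing $p_0\nmid P(n_1)$, and the linear adjustment $n=n_1+p_0^2t$ to hit the residue $P(n_1)^{-1}$ for $Q(n)/p_0^2$) is precisely the standard elementary proof the citation points to, with all the bad primes correctly excluded.
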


We refer to \cite{Manduchi} for a proof of this elementary lemma.
Observe that there is no need of the Squarefree conjecture in Lemma \ref{variationpotmult}.

\begin{lm}\label{variationhpotmult}
Let $\E$ be an elliptic surface with a place of type $I_m^*$  
whose associated polynomial is $Q$. Let $N_\E$ be the integer and $R_\E(U,V)$ be the homogeneous polynomial given by Proposition \ref{existencedunM}.
Put $P=-\frac{\overline{c_6}(u,v)}{Q(u,v)^3}$.

Let $(u,v)$ and $(u',v')$ be pairs of coprime integers such that $(u,v)\equiv(u',v')\mod N_\E$ and that $R_i(u,v)>0$ for every primitive factor $R_i$ of $R_\E$. We denote $\alpha:=Q(u,v)$ and $\beta:=Q(u',v')$.

Suppose there exists a prime number $q_0$ such that we have 
\begin{enumerate}[(1)]
\item $\alpha=c^2l$, where $l$ is without squarefactor dividing $N_\E$ and $\gcd(c,l,q_0)=1$,
\item $\beta=c^2q_0^2\eta$, where $\eta$ is without squarefactor dividing $N_\E$, $\gcd(c,\eta)=\gcd(q_0,c\eta)=1$,
\item $q_0\nmid\delta$ and $q_0^{-2}P(u,v)Q(u,v)\equiv q_0^{-2}P(u',v')Q(u',v')\equiv1\mod q_0$.
\end{enumerate}

Then $h_{Q}(u,v)=-h_{Q}(u',v')$.
\end{lm}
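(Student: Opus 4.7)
The strategy is to evaluate $h_Q(u,v)$ and $h_Q(u',v')$ directly from the $I_m^*$ row of Table \ref{contributions} and to match the two products prime-by-prime. By conditions (1)--(2), the primes $p\nmid\delta$ with $p^2\mid\alpha$ are precisely those dividing $c$, while for $\beta$ one has these same primes together with $q_0$; moreover $\nu_p(\alpha)=\nu_p(\beta)=2\nu_p(c)$ for every $p\mid c$. Hence the quotient $h_Q(u',v')/h_Q(u,v)$ splits into a product of common-prime ratios and a single surplus factor at $q_0$, and the plan is to show that the common factors cancel and that the surplus factor equals $-1$.

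For the common factors, I would reuse verbatim the argument in the proof of Lemma \ref{constancehpotmult}: the congruence $(u,v)\equiv(u',v')\mod N_\E$ together with the sign conditions on the irreducible factors $R_i$ of $R_\E$ places both pairs in the same connected component, so Proposition \ref{existencedunM} (together with the defining properties of $N_\E$) yields $\overline{c_6}(u,v)_{(p)}\equiv\overline{c_6}(u',v')_{(p)}\mod p$ for every $p\mid c$ with $p\nmid\delta$. Since the exponents $\nu_p(\alpha)-1$ and $\nu_p(\beta)-1$ also coincide, the corresponding Legendre symbols and their powers agree, so these contributions cancel in $h_Q(u',v')/h_Q(u,v)$.

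It remains to evaluate the surplus factor coming from $q_0$, which equals (since $\nu_{q_0}(\beta)-1=1$)
\[ -\left(\frac{-\overline{c_6}(u',v')_{(q_0)}}{q_0}\right). \]
Since the reduction at $Q$ is of type $I_m^*$, one has $\nu_Q(\overline{c_6})=3$, so $P$ is a polynomial coprime to $Q$ satisfying $\overline{c_6}=-P\cdot Q^3$. Hypothesis (3) forces $\nu_{q_0}(P(u',v'))=0$, and combined with $\beta^3=c^6 q_0^6\eta^3$ it gives $\overline{c_6}(u',v')_{(q_0)}=-P(u',v')\,c^6\eta^3$. Hence
\[ \left(\frac{-\overline{c_6}(u',v')_{(q_0)}}{q_0}\right) = \left(\frac{P(u',v')}{q_0}\right)\left(\frac{\eta}{q_0}\right), \]
using that $c^6=(c^3)^2$ and $\eta^3=\eta^2\cdot\eta$ are respectively a square and $\eta$ up to a square. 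Rewriting (3) as $P(u',v')\,c^2\eta\equiv 1\mod q_0$ yields $\left(\frac{P(u',v')}{q_0}\right)=\left(\frac{\eta}{q_0}\right)$, so the product of the two Legendre symbols is $\left(\frac{\eta}{q_0}\right)^2=+1$. The surplus factor therefore equals $-1$, which gives $h_Q(u',v')=-h_Q(u,v)$. I expect the principal technical point to be the verification that the "no squarefactor (away from $N_\E$)" hypothesis on $l$ and $\eta$ is exactly strong enough to prevent any additional prime from entering the product defining $h_Q$ outside those dividing $c$ or $q_0$; once this bookkeeping is in place, the remainder of the argument is a direct computation with the $I_m^*$ Weierstrass data and quadratic symbols.
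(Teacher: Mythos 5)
Your proposal is correct and follows essentially the same route as the paper: the quotient $h_Q(u',v')/h_Q(u,v)$ reduces to the single surplus factor $-\bigl(\tfrac{(-\overline{c_6}(u',v'))_{(q_0)}}{q_0}\bigr)$ after cancelling the common primes dividing $c$ via the Lemma \ref{constancehpotmult} congruence argument, and hypothesis (3) forces that Legendre symbol to equal $+1$ (the paper phrases this as $q_0^{-6}(-\overline{c_6})\equiv(c^2\eta)^2\bmod q_0$, which is the same computation as your $\bigl(\tfrac{P}{q_0}\bigr)\bigl(\tfrac{\eta}{q_0}\bigr)=\bigl(\tfrac{\eta}{q_0}\bigr)^2$). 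The bookkeeping point you flag at the end is indeed present but is glossed over in the paper as well.
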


\begin{proof}
Let $Q$ be a homogeneous polynomial associated to a place of type $I_m^*$ of $\E$. We put $Q(u,v)=:\alpha=c^2l$ and $Q(u',v')=:\beta=c^2q_0^2\eta$. Then by Theorem \ref{formuledusigne}, one has

\[h_{Q}(u',v')=\prod_{p\nmid\delta;p\mid cq_0}{\begin{cases}
-\Big(\frac{(-\overline{c_6}(u',v'))_{(p)}}{p}\Big) & \text{if }2\nu_p(q_0c)\equiv2,4\mod6\\
+1 & \text{otherwise.}
\end{cases}}\]
\[=-\Big(\frac{(-\overline{c_6}(u,v))_{(q_0)}}{q_0}\Big)\prod_{p\nmid\delta;p\mid c}{\begin{cases}
-\Big(\frac{(-\overline{c_6}(u,v))_{(p)}}{p}\Big) & \text{if }\nu_p(c)\text{ is even,}\\
+1 & \text{otherwise.}
\end{cases}}\]
\[=-\Big(\frac{(-\overline{c_6}(u,v))_{(q_0)}}{q_0}\Big)\cdot h_{Q}(u,v).\]

By assumption on $q_0$, one has $q_0^{-2}\frac{-\overline{c_6}(u,v)}{Q(u,v)^2}\equiv1\mod q_0$. If we put $Q(u,v)=q_0^2\mu$ where $\mu$ is an integer coprime to $q_0$, one has $q_0^{-6}(-\overline{c_6}(u,v))\equiv \mu^2\mod q_0$. Thus, one has $\left(\frac{(-\overline{c_6}(u,v))_{(q_0)}}{q_0}\right)=+1$ for all $(u,v)$.

Thus we have the equality
\[h_{Q}(u,v)=-h_{Q}(u',v').\]
\end{proof}

\subsection{Variation of the global root number}

\subsubsection{The general case}

\begin{propo}\label{VariationIm}
Let $\E$ be an elliptic surface satisfying the hypotheses of Theorem \ref{thmhelf} or of Theorem \ref{nouveauthm}. 
Let $N_\E$ be the integer and $R_\E(U,V)$ be the homogeneous polynomial given by Proposition \ref{existencedunM}. 
Let $t_1,t_2\in\Q$ be integers, and $(u_1,v_1),(u_2,v_2)\in\Z\times\Z_{>0}$ be their corresponding numerators and denominators as defined in Section \ref{t=xy}. Suppose they satisfy the following properties.

\begin{enumerate}
\item
We have that $(u_1,v_1)\equiv (u_2,v_2)\mod N_\E$, 
\item For every primitive factor $R_i$ of $R_\E$ one has $R_i(u,v)>0$,

\item 
The values $M_\E(u_1,v_1)$ and $M_\E(u_2,v_2)$ are squarefree integers.
\item\label{cellela}
For every $Q$ of type $I_m^*$ or $I_m$, there exists an integer $c_Q$ such that we have \begin{enumerate}
\item $Q(u_1,v_1)=c_Q^2l$ and \item $Q(u_2,v_2)=c_Q^2l'$ 
\end{enumerate}
where $l$ and $l'$ are squarefree integers coprime to $N_\E$. 
\end{enumerate}
If $\E$ admits a place of type $II$, $II^*$,  $IV$ and $IV^*$ at a $P_i$ such that $\mu_3\not\subseteq\Q[T]/P_i(T_,1)$
or of type $III$ or $III^*$ at a $P_i$ such that $\mu_4\not\subseteq\Q[T]/P_i(T,1),$ then suppose hypothesis \ref{cellela} holds for $P_i$ as well.

Then
\[W\left(\E_{\frac{u_1}{v_1}}\right)=\lambda(M_\E(u_1,v_1)M_\E(u_2,v_2))W\left(\E_{\frac{u_2}{v_2}}\right).\]
\end{propo}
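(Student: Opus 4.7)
The plan is to start from the master root number formula
\[W(\E_t)=\lambda(M_\E(u,v))\cdot\prod_{p\mid\delta}W_p(\E_{u,v})\cdot\prod_{P\mid\Delta_\E}h_P(u,v)g_P(u,v)\]
given by Theorem \ref{formuledusigne}, applied to the two fibres $\E_{t_1}$ and $\E_{t_2}$. Since each factor takes values in $\{\pm1\}$, forming the product $W(\E_{t_1})W(\E_{t_2})$ amounts to forming the product of the corresponding factors, and the desired conclusion
\[W(\E_{t_1})W(\E_{t_2})=\lambda(M_\E(u_1,v_1)M_\E(u_2,v_2))\]
will follow once one shows that the ``$\delta$ \& $g_P$'' part and the ``$h_P$'' part each cancel pairwise between the two fibres. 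The $M_\E$ factor is already accounted for by the Liouville function contribution coming from the multiplicative places.

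The first reduction is straightforward: applying Proposition \ref{existencedunM} with the function $\varphi_\E=\prod_{p\mid\delta}W_p\cdot\prod_P g_P$ and using hypotheses (1) and (2) of the proposition (congruence mod $N_\E$ and the positivity of each $R_i(u,v)$), one gets $\varphi_\E(t_1)=\varphi_\E(t_2)$, so this part cancels. Next, one treats the $h_P$'s place by place, according to the Kodaira type of $\E$ at $P$. For $P$ of type $I_0$ or $I_0^*$, Table \ref{contributions} gives $h_P\equiv1$. For $P$ of type $II,II^*,IV,IV^*$ satisfying the hypothesis $\mu_3\subseteq\Q[T]/P(T,1)$ (resp.~$III,III^*$ with $\mu_4\subseteq\Q[T]/P(T,1)$), Lemma \ref{constance24} (resp.~Lemma \ref{constance3}) gives $h_P(u_i,v_i)=+1$, so cancellation is trivial.

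The remaining cases are $P$ of type $I_m$ or $I_m^*$ and, when the roots-of-unity hypothesis fails, the exceptional additive types included by the last sentence of hypothesis (4). In all these cases, hypothesis (4) tells us that $P(u_1,v_1)$ and $P(u_2,v_2)$ share the same square part $c_P^2$, with squarefree quotients $l,l'$ coprime to $N_\E$; hence the set of primes $p\nmid\delta$ with $p^2\mid P(u_i,v_i)$ is precisely the set of primes dividing $c_P$, and $\nu_p(P(u_i,v_i))=2\nu_p(c_P)$ is the same integer for $i=1,2$. For the exceptional additive types, the table expressions for $h_P$ depend only on $p$ and on $\nu_p(P(u,v))\bmod 4$ or $6$, so one reads off $h_P(u_1,v_1)=h_P(u_2,v_2)$ directly. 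For the $I_m$ and $I_m^*$ cases the formula also involves $\left(\tfrac{-\overline{c_6}(u,v)_{(p)}}{p}\right)$, and one invokes Lemma \ref{constancehpotmult}, whose hypotheses are exactly hypothesis (4) of the proposition together with the congruence and sign-region conditions already in force.

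The main obstacle is handling those $I_m$/$I_m^*$ places cleanly: one has to be sure that for each $p\mid c_P$ (a prime which need not divide $N_\E$) the Jacobi symbol $\left(\tfrac{-\overline{c_6}(\cdot)_{(p)}}{p}\right)$ takes the same value at $(u_1,v_1)$ and $(u_2,v_2)$. This is precisely the content of Lemma \ref{constancehpotmult}, and the proof-plan consists of verifying that its hypotheses hold in all the relevant cases and then collecting the constant factors $h_P(u_1,v_1)h_P(u_2,v_2)=1$, case by case, before multiplying everything back together to obtain the claimed identity.
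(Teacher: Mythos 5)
Your proposal is correct and follows essentially the same route as the paper: write both root numbers via Theorem \ref{formuledusigne}, cancel the $\prod_{p\mid\delta}W_p\cdot\prod_P g_P$ part using Proposition \ref{existencedunM} with hypotheses (1) and (2), and cancel the $h_P$ part type by type using Lemmas \ref{constance24}, \ref{constance3} and the equal-square-part hypothesis (4) via Lemma \ref{constancehpotmult}. If anything, your write-up is slightly more explicit than the paper's (which does not name Lemma \ref{constancehpotmult} or spell out the exceptional additive types), but the argument is the same.
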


\begin{rem}
When the only place of type $I_m$ is the one at infinity, 
we rather take as a hypothesis $v_1\equiv v_2\mod N_\E$ squarefree integers, and the conclusion of Proposition \label{VariationIm} is
\[W\left(\E_{\frac{u_1}{v_1}}\right)=\lambda(v_1v_2)W\left(\E_{\frac{u_2}{v_2}}\right).\]
However, we can avoid this case by a change of variable.
\end{rem}

\begin{rem}
When there is no place of type $I_m$, the conclusion of this proposition is \[W\left(\E_{\frac{u_1}{v_1}}\right)=W\left(\E_{\frac{u_2}{v_2}}\right).\] Hence, in this case, we can not use this proposition to make the root number vary, and we will need Proposition \ref{variationglobale}.
\end{rem}

\begin{proof}
By Theorem \ref{formuledusigne}, we have:
\begin{equation}\label{4.1}
W(\E_{\frac{u_1}{v_1}})=\lambda(M_\E(u_1,v_1))\prod_{p\mid\delta}{W_p\left(\E_{\frac{u_1}{v_1}}\right)}\prod_{P\in\mathscr{M}}{g_{P}(u_1,v_1)}\prod_{P\in\mathscr{B}}{h_{P}(u_1,v_1)}.\end{equation}
The rest of the proof lies on the fact that every term in equation (\ref{4.1}) is constant.
We know by construction of the integer $N_\E$ and  of the polynomial $R_\E$ that \[\prod_{p\mid\delta}{W_p\left(\E_{\frac{u_1}{v_1}}\right)}\prod_{P\in\mathscr{M}}{g_{P}(u_1,v_1)}=\prod_{p\mid\delta}{W_p\left(\E_{\frac{u_2}{v_2}}\right)}\prod_{P\in\mathscr{M}}{g_{P}(u_2,v_2)}.\]

By assumption, a polynomial $Q$ at which $\E$ has type $I_m^*$ is such that $Q(u_1,v_1)$ and $Q(u_2,v_2)$ have the same square part: a constant named $c_Q$.
By this fact and Lemmas \ref{constance24} and \ref{constance3}, we have the equality
\[\prod_{P\in\mathscr{B}}{h_{P}(u_1,v_1)}=\prod_{P\in\mathscr{B}}{h_{P}(u_2,v_2)}.\]

To finish, we observe that \[h_{\infty}(u_1,v_1)=h_{\infty}(u_2,v_2)=+1.\]
\end{proof}

\subsubsection{No place of reduction of type $I_m$}

\begin{propo}\label{variationglobale}
Let $\E$ be an elliptic surface with no place $I_m$ and satisfying the hypothesis of Theorem \ref{thmhelf} or \ref{nouveauthm}. Let $N_\E$ be the integer and $R_\E(U,V)$ be the homogeneous polynomial given by Proposition \ref{existencedunM}. 
Let $t_1,t_2\in\Q$ be integers, and $(u_1,v_1),(u_2,v_2)$ be their corresponding numerators and denominators as defined in Section \ref{t=xy}. Suppose they satisfy the following properties. 

\begin{enumerate}
\item We have $(u_1,v_1)\equiv(u_2,v_2)\mod N_\E$, a non-zero congruence class.
\item For every primitive factor $R_i$ of $R_\E$ one has $R_i(u,v)>0$. 

\item For a certain $Q_0$ of type $I_m^*$ ($m>0$), one has
\begin{enumerate}
\item $Q_0(u_1,v_1)=c^2l$ where $l$ is a squarefree integer coprime to $N_\E$,
\item $Q_0(u_2,v_2)=c^2q_0^2l'$ where $l'$ is a squarefree integer coprime to $N_\E$, and $q_0$ is a prime number which does not divide $\delta$ and such that $-p_0^{-6}\overline{c_6}(u_i,v_i)$ is a square modulo $q_0$ for $i=1,2$.
\end{enumerate}
\item For every $Q\not=Q_0$ of type $I_m^*$ ($m>0$),
\begin{enumerate}
\item $Q(u_1,v_1)=c_Q^2l_Q$ where $l_Q$ is a squarefree integer coprime to $N_\E$,
\item $Q(u_2,v_2)=c_Q^2l_Q'$ where $l_Q'$ is a squarefree integer coprime to $N_\E$.
\end{enumerate}
\end{enumerate}

If $\E$ admits a place of type $II$, $II^*$,  $IV$ and $IV^*$ at a $P_i$ such that $\mu_3\not\subseteq\Q[T]/P_i(T_,1)$
or of type $III$ or $III^*$ at a $P_i$ such that $\mu_4\not\subseteq\Q[T]/P_i(T,1),$ then suppose hypothesis \ref{cellela} holds for $P_i$ as well.

Then, we have \[W(\E_{t_1})=-W(\E_{t_2}).\]
\end{propo}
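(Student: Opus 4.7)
The plan is to apply the root number formula of Theorem \ref{formuledusigne} to both fibers $\E_{t_1}$ and $\E_{t_2}$ and then compare the resulting factors one by one, showing that everything matches except for the contribution of $Q_0$, which flips sign.

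First, since $\E$ has no place of multiplicative reduction, $M_\E=1$, so the Liouville factor $\lambda(M_\E(u_i,v_i))$ equals $+1$ for both $i$. Next, the congruence $(u_1,v_1)\equiv(u_2,v_2)\mod N_\E$ together with the sign conditions on the primitive factors of $R_\E$ places $t_1$ and $t_2$ in the same equivalence class of Proposition \ref{existencedunM}, which therefore gives
\[\prod_{p\mid\delta}W_p(\E_{t_1})\prod_{P\in\mathscr{B}}g_P(u_1,v_1)=\prod_{p\mid\delta}W_p(\E_{t_2})\prod_{P\in\mathscr{B}}g_P(u_2,v_2).\]
It thus remains to compare the products $\prod_{P\in\mathscr{B}}h_P(u_i,v_i)$, which I would decompose along Kodaira types.

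For $P$ of type $I_0$ or $I_0^*$ one has $h_P\equiv 1$, so nothing is to show. For $P$ of type $II,II^*,IV,IV^*$ with $\mu_3\subseteq\Q[T]/P(T,1)$, Lemma \ref{constance24} forces $h_P(u_i,v_i)=+1$; similarly, Lemma \ref{constance3} handles types $III,III^*$ when $\mu_4\subseteq\Q[T]/P(T,1)$. For the remaining non-$I_m^*$ additive places, the supplementary matching-square-part hypothesis stated at the end of the proposition applies: the formulas for $h_P$ at these types in Table \ref{contributions} are products of Jacobi symbols $\left(\tfrac{-1}{p}\right)$ and $\left(\tfrac{-3}{p}\right)$ indexed by primes $p$ with $p^2\mid P(u,v)$, coupled with congruence conditions on $\nu_p(P(u,v))$, and all of this data depends only on the shared square factor $c_P^2$ of $P(u_1,v_1)$ and $P(u_2,v_2)$; hence $h_P(u_1,v_1)=h_P(u_2,v_2)$. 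For each $P$ of type $I_m^*$ with $P\neq Q_0$, hypothesis (4) of the proposition combined with Lemma \ref{constancehpotmult} yields the same equality.

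Finally, for $Q_0$ itself, the shape $Q_0(u_1,v_1)=c^2l$ versus $Q_0(u_2,v_2)=c^2q_0^2l'$ in hypothesis (3), together with the residue requirement that $-q_0^{-6}\overline{c_6}(u_i,v_i)$ be a square modulo $q_0$, is precisely the data needed to invoke Lemma \ref{variationhpotmult}, which gives $h_{Q_0}(u_1,v_1)=-h_{Q_0}(u_2,v_2)$. Multiplying all the comparisons, the two root numbers differ by exactly one factor of $-1$, yielding $W(\E_{t_1})=-W(\E_{t_2})$. The bookkeeping is structurally identical to that of Proposition \ref{VariationIm}, with the single change that one $I_m^*$ place is allowed to absorb the extra prime-square bump $q_0^2$ in its square part; the only non-routine point to double-check is that the matching-square-part argument really does apply at the non-$I_m^*$ additive places, which is transparent from Table \ref{contributions} since the Jacobi symbols there depend only on $p$ and on $\nu_p(P(u,v))$ and not on $(u,v)$ directly.
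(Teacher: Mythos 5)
Your proof is correct and follows essentially the same route as the paper: the paper's own argument simply says ``as in Proposition \ref{VariationIm}, except that Lemma \ref{variationhpotmult} applied to $Q_0$ flips the sign of $h_{Q_0}$'', and your write-up is exactly that comparison, carried out factor by factor via Theorem \ref{formuledusigne}, Proposition \ref{existencedunM}, Lemmas \ref{constance24}, \ref{constance3}, \ref{constancehpotmult} and finally \ref{variationhpotmult}. Your version is merely more explicit than the paper's one-line proof, and the extra care you take at the non-$I_m^*$ additive places without the roots-of-unity hypothesis is a faithful unpacking of what Proposition \ref{VariationIm} already does there.
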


\begin{proof} The proof is similar to Proposition \ref{VariationIm}, except that for the function $h_{Q_0}$ corresponding to the chosen $Q_0$ of potential multiplicative reduction, Lemma \ref{variationhpotmult} shows that \[h_{Q_0}(u_1,v_1)=-h_{Q_0}(u_2,v_2).\]
\end{proof}

\section{Proof of the theorem \ref{thmhelf} and \ref{nouveauthm}}\label{sectionproofHelfgott}

The proofs of Theorem \ref{thmhelf} and \ref{nouveauthm} are similar.
In this Section we prove the two theorems at the same time, highlighting the differences along the way. Depending on whether or not $\E$ has a singular fiber of multiplicative reduction, we use a different strategy. 

\subsection{The case $M_\E=1$}

\begin{thm}
Let $\E$ be a non-isotrivial elliptic surface with no place of multiplicative reduction and satisfying the hypotheses of Theorem \ref{thmhelf} or \ref{nouveauthm}.

Then the two sets $\#W_\pm(\E)$ are infinite.
\end{thm}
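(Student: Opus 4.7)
The plan is to reduce the statement to Proposition \ref{variationglobale} via a double application of the squarefree sieve (Corollary \ref{crible}). Since $\E$ is non-isotrivial, the rational function $j_\E = c_4^3/\Delta$ is non-constant and hence has at least one pole on $\P^1$. Every pole corresponds to a place of potentially multiplicative reduction, i.e.\ of Kodaira type $I_m$ or $I_m^*$. The hypothesis $M_\E=1$ excludes $I_m$, so there must exist at least one place of type $I_m^*$ with $m\geq 1$; fix such a place and denote by $Q_0\in\mathscr{B}$ its associated primitive homogeneous polynomial. Let $N_\E$ and $R_\E$ be provided by Proposition \ref{existencedunM}, enlarged if necessary so that for every $P\in\mathscr{B}$ and every prime $p\mid N_\E$, the valuation $\nu_p(P(u,v))$ is determined by the class of $(u,v)$ modulo $N_\E$; in particular, for each $P$ of type $I_m^*$, the ``trivial'' square part $c_P^2$ coming from primes dividing $N_\E$ will be fixed across the two families we construct.

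Next, I would apply Lemma \ref{variationpotmult} to the pair of one-variable polynomials $Q_0(T,1)$ and the polynomial $P(T):=-\overline{c_6}(T,1)/Q_0(T,1)^3$ (which is a polynomial because type $I_m^*$ forces $Q_0^3\mid\overline{c_6}$), with exclusion set containing all primes dividing $\delta N_\E$. This produces a prime $q_0\nmid\delta N_\E$ and an integer $n_0$ such that $q_0^2\Vert Q_0(n_0,1)$ and $q_0^{-2}P(n_0)Q_0(n_0,1)\equiv 1\pmod{q_0}$; the latter congruence guarantees precisely that $-q_0^{-6}\overline{c_6}(n_0,1)$ is a square modulo $q_0$, which is the technical condition needed in Proposition \ref{variationglobale}.

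I would then set up two base pairs in the same class modulo $N_\E$ with every primitive factor $R_i$ of $R_\E$ positive: a pair $(a_2,b_2)$ congruent to $(n_0,1)$ modulo $q_0^3$, and a pair $(a_1,b_1)$ with $q_0\nmid Q_0(a_1,b_1)$, both with all other auxiliary squarefree conditions verified at primes dividing $N_\E$. Applying Corollary \ref{crible} to the squarefree radical $f=\prod_{P\in\mathscr{B}}P$ (whose irreducible factors have degree $\leq 6$ by hypothesis \ref{hyphelf1} of Theorem \ref{thmhelf} or hypothesis \ref{hyp1} of Theorem \ref{nouveauthm}, noting that in the $M_\E=1$ setting the multiplicative-place degree constraint is vacuous) once with $S=\emptyset$ at $(a_1,b_1)$ and once with $S=\{q_0\}$, $t_1=2$ at $(a_2,b_2)$, yields infinite families of pairs $(u_1,v_1)$ and $(u_2,v_2)$ with $Q_0(u_1,v_1)=c^2 l$ and $Q_0(u_2,v_2)=c^2 q_0^2 l'$, where $l,l'$ are squarefree coprime to $N_\E q_0$ and the common $c^2$ is the fixed contribution from primes dividing $N_\E$. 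By construction the hypotheses of Proposition \ref{variationglobale} are met, so $W(\E_{u_1/v_1})=-W(\E_{u_2/v_2})$, producing infinitely many fibers of each sign.

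The main obstacle is the bookkeeping required so that for \emph{every} place $Q\neq Q_0$ of type $I_m^*$ (and for the places of types $II,II^*,III,III^*,IV,IV^*$ not already killed by Lemmas \ref{constance24} and \ref{constance3}), the square part of $Q(u_i,v_i)$ coming from primes outside the sieve's sequence matches between the two families. This is exactly why $N_\E$ must be enlarged so that all these valuations at primes $p\mid N_\E$ are congruence-controlled; once this is done, the remaining ``free'' part of each $Q(u_i,v_i)$ is squarefree by the sieve, and the square factor $c_Q^2$ is common to both families, so Proposition \ref{variationglobale} applies cleanly.
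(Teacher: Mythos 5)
Your overall route is the same as the paper's: use non-isotriviality plus $M_\E=1$ to produce a place $Q_0$ of type $I_m^*$ ($m\geq1$), invoke Lemma \ref{variationpotmult} to manufacture a prime $q_0$ with $q_0^2\|Q_0$ and the quadratic-residue condition on $-q_0^{-6}\overline{c_6}$, run the squarefree sieve of Corollary \ref{crible} twice (once with $q_0$ excluded, once with $t_{q_0}=2$), and conclude by Proposition \ref{variationglobale}. The verification that $q_0^{-2}P(n_0)Q_0(n_0,1)\equiv1\pmod{q_0}$ is exactly the condition needed in Proposition \ref{variationglobale} is correct and matches Lemma \ref{variationhpotmult}.

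There is, however, a concrete misstep in the choice of the polynomial fed into Corollary \ref{crible}. You apply the sieve to $f=\prod_{P\in\mathscr{B}}P$ and justify the required degree bound by citing hypothesis \ref{hyphelf1} of Theorem \ref{thmhelf} or hypothesis \ref{hyp1} of Theorem \ref{nouveauthm}. But neither hypothesis bounds the degree of \emph{all} places of bad reduction: under Theorem \ref{thmhelf} the places of type $I_0^*$ are explicitly exempted from the degree-$6$ bound, and under Theorem \ref{nouveauthm} only the $I_m^*$ places ($m\geq1$) are bounded, while the additive places of types $II,II^*,III,III^*,IV,IV^*$ satisfying the $\mu_3$ or $\mu_4$ condition may have arbitrarily large degree --- indeed Corollary \ref{thmexemple} is built precisely on such examples. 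For those factors Corollary \ref{crible} simply does not apply, so your sieve step fails as stated in the very cases Theorem \ref{nouveauthm} is designed to cover. The fix is the one the paper uses: restrict the sieve polynomial to the places where squarefreeness is actually needed, namely the $I_m^*$ places (degree $\leq6$ by hypothesis), and observe that for the excluded places no control on square factors is required because $h_P\equiv+1$ identically (Table \ref{contributions} for $I_0^*$, Lemmas \ref{constance24} and \ref{constance3} for the $\mu_3$/$\mu_4$ additive types), while their $g_P$ contribution is already frozen by the congruence class modulo $N_\E$ and the sign conditions on the $R_i$ from Proposition \ref{existencedunM}. With that correction your argument goes through and coincides with the paper's proof.
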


\begin{proof}
Let $\frac{n_4}{d_4}$, $\frac{n_6}{d_6}$, $\frac{n_\Delta}{d_\Delta}$ be the content of respectively the polynomials $c_4(T)$, $c_6(T)$, $\Delta(T)$ associated to $\E$. (Observe that the only factors of $d_4,d_6,d_\Delta$ are $2$ or $3$ when the fractions for the contents are irreducible.) Put
\[\delta=2\cdot3\cdot n_4\cdot n_6\cdot n_\delta\prod_{Q,Q'\in\mathscr{B}}{\mathrm{Res}(Q,Q'}).\]

Let $N=N_\E$ be the integer and $R_\E$ be the homogeous polynomial given by Proposition \ref{existencedunM} (choose the minimal such integer $N_\E$ and polynomial $R_\E$ of lowest degree). Write $R=\gamma R_1\cdot R_r$, the decomposition in primitive polynomial. Let us choose $R$ such that $sign(\gamma)=+1$.

The function $t\mapsto\prod_{p\mid\delta}{W_p(\E_{\frac{u}{v}})}\prod_{P\in\mathscr{B}}{g_{P}(u,v)}$ is constant when $(u,v)\in\Z\times\Z_{>0}$ stays in a double congruence class modulo $N$ and when $(u,v)$ are in a connected component of \[\R^2-\{(u,v)\in\R^2 \ \vert R(u,v)=0\}.\] Recall that here, $u,v$ are the coprime integers such that $t=\frac{u}{v}$ as in \ref{t=xy}.

For each $p\mid N$, put $\alpha_p=\nu_p(N)$. Moreover, write $N=2^{\alpha_2}3^{\alpha_3}p_1^{\alpha_{p_1}}\cdots p_s^{\alpha_{p_s}}$ the factorisation into distinct prime numbers.

Put \[S=(2,3,p_1,\dots,p_s),\] 
 and \[T=(0,\dots,0).\]

Let $\mathfrak{a}_2,\mathfrak{b}_2\mod 2^{\alpha_2}$ be congruence classes such that
for all $P_i$ of bad reduction (except $I_0^*$)
\begin{equation*}\label{classe2}
P_i(\mathfrak{a}_2,\mathfrak{b}_2)\not\equiv0\mod2^{\alpha_2}.
\end{equation*}

Let $(\mathfrak{a}_3,\mathfrak{b}_3)\mod 3^{\alpha_3}$ be classes such that for all $P_i$ of bad reduction (except $I_0^*$)
 \begin{equation*}\label{classe3}
P_i(\mathfrak{a}_3,\mathfrak{b}_3)\not\equiv0\mod3^{\alpha_3}.
\end{equation*}

Let also, for each $p\mid N$ such that $p\not=2,3$, be classes $\mathfrak{a}_p,\mathfrak{b}_p\mod p^{\alpha_p}$ such that for all $P$ of bad reduction (except $I_0^*$) we have:
\[P_i(\mathfrak{a}_p,\mathfrak{b}_p)\not\equiv0\mod p^{\alpha_p}.\]
As by assumption $P_i$ has content 1, such classes $\mathfrak{a}_p, \mathfrak{b}_p$ exist for every $p\mid N$.

By the Chinese Remainder Theorem, there exists integers $a,b$ satisfying
\begin{equation}\label{chinois1}
(a,b)\equiv\begin{cases}
(\mathfrak{a}_2,\mathfrak{b}_2)\mod 2^{\alpha_2}, & \\
(\mathfrak{a}_3,\mathfrak{b}_3)\mod 3^{\alpha_3}, & \\
(\mathfrak{a}_p,\mathfrak{b}_p)\mod p^{\alpha_p} & \text{for all $p\mid N$,} \\
\end{cases}
\end{equation}

We can choose them such that $R_i(u,v)>0$ for all $1\leq i\leq r$.

Put $Q_0=\prod{Q}$ where $Q$ runs through the polynomials of potentially good reduction. (If $\E$ does not satisfy the hypotheses of Theorem \ref{nouveauthm}, take instead this product over every polynomial of bad reduction.)

By the Squarefree Sieve given by Corollary \ref{crible} applied to $Q_0,S,T,N,a$ and $b$ as previously, there exists a set $\mathscr{F}_1$ of infinitely many pairs $(u,v)\in\Z\times\Z_{>0}$ such that 
\[Q_0(u,v)=l,\]
where $l$ is a squarefree integer coprime to each $p\in S$ by our choice of $S$ and $T$.
By Remark \ref{thmsurlessecteurs}, this set $\mathscr{F}_1$ may as well be chosen such that  $\forall(u,v)\in\mathscr{F}_1$ may as well be chosen such that $R_i(u,v)>0$ for all $1\leq i\leq r$.

By Proposition \ref{variationglobale}, for all $(u,v),(u',v')\in\mathscr{F}_1$,
\[W(\E_{\frac{u}{v}})=W(\E_{\frac{u'}{v'}}).\]

Choose $Q_1$ a polynomial associated to a place of type $I_m^*$. We are assured that there exists at least one such polynomial: it is a pole of the $j$-invariant. It is possible to proceed to a linear change of variable to avoid the case where the only place of type $I_m^*$ is the one at infinity. We thus suppose without loss of generality $Q_1\in\Z[U,V]$.

Put $P(U,V)=-\overline{c_6}(U,V)/Q(U,V)^3$.
By Lemma \ref{variationpotmult} applied to $P(T,1)=:P(T)$, $Q(T,1)=:Q(T)$ and $S=:\mathscr{P}_0$, there exist $q_0\not\in S$ and $m_0\leq0$ an integer such that $q_0^2\mid Q(m_0,1)$ and that $-q_0^{-2}P(m_0,1)Q(m_0,1)$ is a square modulo $q_0$. 

Consider the sequences
\[S'=(2,3,p_1,\dots, p_s,q_0)\]
and
\[T'=(0,\dots,0,2).\]

By the Chinese Remainder Theorem, there exists a pair of integers $(a',b')$ satisfying both (\ref{chinois1}) and 
\[a'\equiv m_0\mod q_0^3\text{, }b'\equiv 1\mod q_0^3.\]
We can choose them such that $R_i(u,v)>0$ for all $1\leq i\leq r$.

By using Corollary \ref{crible} 
on \[\text{$Q_1$, $S'$, $T'$, $a'$ and $b'$,}\] we obtain $\mathscr{F}_2$ a set of infinitely many pairs of coprime integers $(u,v)\in\Z\times\Z_{>0}$ such that \[Q_1(u,v)=q_0^2l_{u,v},\]
where $l$ is an squarefree integer coprime to every element of $S'$ and where $q_0^{-6}\overline{c_6}(u,v)$ is a square modulo $q_0$. As previously, we choose $\mathscr{F}_2$ such that $\forall(u,v)\in\mathscr{F}_2$ one has $R_i(u,v)>0$ for all $1\leq i\leq r$. By Proposition \ref{variationglobale}, all the elements of $\mathscr{F}_2$ are such that their fibers on $\E$ have the same root number.\\

To end the proof, we use Proposition \ref{variationglobale} to show that for all $(u_1,v_1)\in\mathscr{F}_1$, and every $(u_2,v_2)\in\mathscr{F}_2$, one has \[W\left(\E_{\frac{u_1}{v_1}}\right)=-W\left(\E_{\frac{u_2}{v_2}}\right).\]
\end{proof}

\subsection{The case $M_\E\not=1$}

\begin{thm}\label{nouveauthmconjecturel}
Let $\E$ be an non-isotrivial elliptic surface satisfying the hypotheses of Theorem \ref{thmhelf} or Theorem \ref{nouveauthm}. Suppose that $M_\E\not=1$, in other words there exists a place of multiplicative reduction on $\E$. 
Then the sets $W_\pm(\E)$ are both infinite. 
\end{thm}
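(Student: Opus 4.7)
The plan is to produce two infinite families $\mathscr{F}_+, \mathscr{F}_-$ of coprime pairs of integers, sharing a common residue class modulo an integer $N$ and a common connected component of $\R^2 \setminus \{R_\E = 0\}$, on which every factor appearing in the root-number formula of Theorem \ref{formuledusigne} takes identical values \emph{except} $\lambda(M_\E(u,v))$, which will equal $+1$ on $\mathscr{F}_+$ and $-1$ on $\mathscr{F}_-$. Combining this with Proposition \ref{VariationIm} will yield $W(\E_{u_1/v_1}) = -W(\E_{u_2/v_2})$ for any $(u_1,v_1) \in \mathscr{F}_+$ and $(u_2,v_2) \in \mathscr{F}_-$, and hence that both $W_+(\E)$ and $W_-(\E)$ are infinite.

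I would set $f := M_\E$ and $g := \prod_Q Q$, the product running over the primitive factors of $\Delta_\E$ of type $I_m^*$ with $m\geq 1$ (and, under the hypotheses of Theorem \ref{thmhelf}, also those of types $II,II^*,III,III^*,IV,IV^*$). By hypothesis \ref{hyphelf4} (resp.\ \ref{hyp4}), $f$ satisfies Chowla's conjecture by Theorem \ref{theochowla}; by hypothesis \ref{hyphelf1} (resp.\ \ref{hyp1}), every irreducible factor of $g$ has degree $\leq 6$, so $g$ satisfies the squarefree conjecture by Theorem \ref{theosqf}. Moreover $f$ and $g$ are coprime primitive squarefree polynomials. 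Let $(N_\E, R_\E)$ be as in Proposition \ref{existencedunM} and take $N$ to be a multiple of $N_\E$ divisible by $p^2$ for every prime $p < \deg f + \deg g$.

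Next, by the Chinese Remainder Theorem I would pick integers $(a, b)$ coprime to $N$ with $R_i(a, b) > 0$ for every primitive factor $R_i$ of $R_\E$ and $(fg)(a, b) \not\equiv 0 \pmod{p^2}$ for every prime $p \mid N$; this is possible since $f$ and $g$ are primitive. Applying Corollary \ref{sflcrible} with $S = T = \emptyset$ and $\epsilon = +1$, then $\epsilon = -1$, produces infinite sets $\mathscr{F}_\pm$ of coprime pairs $(u, v) \equiv (a, b) \pmod N$ for which $R_i(u,v) > 0$ for all $i$, both $f(u,v)$ and $g(u,v)$ are squarefree integers, and $\lambda(M_\E(u,v)) = \pm 1$ respectively. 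Because the pairwise resultants of the irreducible factors of $fg$ divide $\delta \mid N$, squarefreeness of the products $f(u,v)$ and $g(u,v)$ forces each factor value $Q(u,v)$ to be squarefree and pairwise coprime, so hypothesis (4) of Proposition \ref{VariationIm} is verified with $c_Q = 1$ for every relevant $Q$. Under the hypotheses of Theorem \ref{nouveauthm}, hypotheses \ref{hyp2} and \ref{hyp3} combined with Lemmas \ref{constance24} and \ref{constance3} ensure $h_P(u,v) = 1$ on each of the additive-potentially-good places not entering $g$, so all conditions of Proposition \ref{VariationIm} are met.

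Finally, applying Proposition \ref{VariationIm} to any pair $(u_1, v_1) \in \mathscr{F}_+$ and $(u_2, v_2) \in \mathscr{F}_-$ gives
\[
W(\E_{u_1/v_1}) \;=\; \lambda\bigl(M_\E(u_1,v_1)\,M_\E(u_2,v_2)\bigr)\,W(\E_{u_2/v_2}) \;=\; -\,W(\E_{u_2/v_2}),
\]
so $W_+(\E)$ and $W_-(\E)$ are both infinite. The main obstacle will be the bookkeeping needed to assemble the starting data $(a,b)$ compatibly: one must simultaneously satisfy CRT congruences, enforce the sign conditions on the factors of $R_\E$, and ensure non-vanishing of $fg$ modulo $p^2$ for $p \mid N$, all while remaining inside the joint scope of Chowla's and the squarefree conjectures given by the degree constraints of Theorems \ref{thmhelf} and \ref{nouveauthm}. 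The crucial analytic input making this possible is Theorem \ref{conjecturesfacteurs} (via Corollary \ref{sflcrible}), which is precisely what allows one to prescribe both the squarefreeness structure of $(fg)(u,v)$ and the sign of $\lambda(f(u,v))$ simultaneously.
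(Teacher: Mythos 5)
Your proposal is correct and follows essentially the same route as the paper: choose $(a,b)$ by CRT compatible with $N_\E$, the sign conditions on $R_\E$, and non-vanishing of the discriminant factors modulo $p^2$; run the Squarefree--Liouville sieve (Corollary \ref{sflcrible}) twice with $\epsilon=\pm1$ to get the two families; and conclude with Proposition \ref{VariationIm}. Your choice of $g$ as the product of the non-multiplicative places needing squarefree control (rather than all of $B_\E$) is a harmless --- indeed slightly cleaner --- variant, since it makes $f$ and $g$ genuinely coprime as Corollary \ref{sflcrible} requires; the only detail you omit that the paper includes is the preliminary change of variables ensuring the place at infinity is not of type $I_m$.
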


\begin{proof}  

We study a surface such that the infinite place in not $I_m$ (we can make this assumption without lost of generality, by changing variable if needed).

Let $N:=N_\E$ be the integer and let $R_\E$ be the homogenous polynomial corresponding to $\E$ given by Proposition \ref{existencedunM}. Write $R_\E=R_1\cdots R_r$ the factorisation into irreducible factors. For each $p\mid N$, let $\alpha_p=\nu_p(N)$, so that \[N=2^{\alpha_2}3^{\alpha_3}\cdot p_1^{\alpha_{p_1}}\dots p_s^{\alpha_{p_s}}\]
is the factorisation into distinct prime factors.
Put \[S=(2,3,p_1,\dots,p_s)\] 
and \[T=(0,\dots,0).\]

In a similar way as in the previous theorem, the Chinese Remainder Theorem allows to obtain $(a,b)$ a congruence class modulo $N$ such that 
\begin{equation}\label{chinois}
\begin{cases}
B_\E(a,b)\not\equiv0\mod 2^{\alpha_2}, & \\
B_\E(a,b)\not\equiv0\mod 3^{\alpha_3}, & \\
B_\E(a,b)\not\equiv0\mod p^{\alpha_p} & \text{for all $p\mid N$,} \\
\end{cases}
\end{equation}

Moreover we can choose $(a,b)$ such that $R_i(a,b)>0$ for all $0\leq i\leq r$.

By Chowla's conjecture for the polynomial $B_\E$, it is possible to find two pairs of integers $(a_1,b_1)$ and $(a_2,b_2)$ such that $(a_1,b_1)\equiv(a_2,b_2)\equiv(a,b)\mod N$ such that respectively $\lambda(B_\E(a_1,b_1))=-1$ and $\lambda(B_\E(a_2,b_2))=+1$.

Using twice the Squarefree-Liouville sieve of Corollary \ref{sflcrible} on $M_\E=:f$, $B_\E=:g$, $R_\E:=R$, $S$, $T$, $N$, $a$ and $b$, we find a set $\mathscr{F}_1$ and $\mathscr{F}_2$ of infinitely many pairs $(u,v)\in\Z\times\Z_{>0}$ such that 
\[(u,v)\equiv (a,b)\mod N\]
and \[R_i(u,v)>0\quad \text{for all }1\leq i\leq r\]
and
 \[\lambda(k_{u,v})=+1\text{, if $(u,v)\in\mathscr{F}_1$}\qquad\text{ or }\lambda(k_{u,v})=-1\text{, if $(u,v)\in\mathscr{F}_2$.}\]
and moreover such that
\[B_\E(u,v)=l_{u,v},\qquad\text{ and }M_\E(u,v)=k_{u,v},\]
where $l_{u,v},k_{u,v}$ are squarefree integers (depending on $u,v$) coprime to every $p\in S$ by the choice of $S$ and of $T$.

Thus by Proposition \ref{VariationIm}, for all $(u_1,v_1)\in\mathscr{F}_1$, $(u_2,v_2)\in\mathscr{F}_2$, one has \[W\left(\E_{\frac{u_1}{v_1}}\right)=-W\left(\E_{\frac{u_2}{v_2}}\right).\]
\end{proof}

\section{Families with arbitrarily large degree discriminant factors}\label{sectionexample}

\begin{thm}
Let $Q$ be a squarefree polynomial such that its irreducible factors have degree less or equal to 6 and not equal to $T$. Let $N\geq1$. Put \[P(T)=3\alpha^2Q(T)^2+\beta^2T^{2N},\]
and $\alpha,\beta\in\Z$ coprime.

Let $\E$ be the elliptic surface given by the equation \[\E:y^2=x^3-27P(T)Q(T)^2x-54\beta P(T)Q(T)^3T^N.\]

Then $W_+$ and $W_-$ are infinite.

Moreover, if we assume the parity conjecture to hold, then the rational points of $\E$ are Zariski-dense.
\end{thm}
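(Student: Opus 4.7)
The plan is to verify directly that $\E$ satisfies all four hypotheses of Theorem \ref{nouveauthm} and then invoke that theorem. The essential algebraic input is the identity $P(T) - \beta^2 T^{2N} = 3\alpha^2 Q(T)^2$, which simultaneously controls the shape of the discriminant and forces $\sqrt{-3}$ into the residue field at every irreducible factor of $P$.

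I would first compute the discriminant. Using $c_4(T) = P(T)Q(T)^2$ and $c_6(T) = \beta P(T)Q(T)^3 T^N$, a direct expansion yields
\[c_4^3 - c_6^2 = P^2 Q^6\bigl(P - \beta^2 T^{2N}\bigr) = 3\alpha^2 P^2 Q^8,\]
so $\Delta_\E \propto P^2 Q^8$ and $j_\E = P/(3\alpha^2 Q^2)$. Non-isotriviality follows because $P = cQ^2$ would force $\beta^2 T^{2N} = (c-3\alpha^2)Q^2$, incompatible with $N \geq 1$ and $T \nmid Q$. Since $\gcd(P,Q) \mid \gcd(\beta^2 T^{2N}, Q) = 1$ (using $T \nmid Q$) and $T \nmid P$ (using $Q(0) \neq 0$), the irreducible factors of $\Delta_\E$ are precisely those of $P$ and $Q$, plus possibly the infinite place.

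I would next read off the Kodaira types, assuming $P$ is squarefree (the generic case; otherwise one reduces to a minimal model without loss). At each irreducible factor $R_j$ of $P$, the triple $(\nu_{R_j}(c_4), \nu_{R_j}(c_6), \nu_{R_j}(\Delta))$ is $(1,1,2)$, giving Kodaira type $II$. At each irreducible factor $Q_i$ of $Q$ it is $(2,3,8)$, giving type $I_2^*$. A degree count with $k = \lceil\max(\deg c_4/4, \deg c_6/6)\rceil$ shows that the place at infinity is good when $\deg Q \geq N$, of multiplicative type $I_{2(N-\deg Q)}$ when $N > \deg Q$ and $N + \deg Q$ is even, and of type $I_{2(N-\deg Q)}^*$ when $N > \deg Q$ and $N + \deg Q$ is odd; in each case the associated homogeneous polynomial is the linear form $V$.

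Verifying the four hypotheses is then quick. Hypothesis (2) is vacuous (no place of type $III$ or $III^*$); hypothesis (3) holds because $M_\E$ is either $1$ or the linear form $V$; hypothesis (4) holds because the factors of $Q$ have degree at most $6$ by assumption and any $I_m^*$ at infinity is carried by the degree-one polynomial $V$. The substantive step, which I expect to be the clever point rather than the main obstacle, is hypothesis (1): for each irreducible factor $R_j$ of $P$ and root $\theta$ of $R_j(T,1)$, the equation $P(\theta) = 0$ reads $3\alpha^2 Q(\theta)^2 = -\beta^2 \theta^{2N}$, with $Q(\theta) \neq 0$ (else $\theta = 0$ contradicts $T \nmid P$), so
\[-3 = \left(\frac{\beta\, \theta^{N}}{\alpha\, Q(\theta)}\right)^{2} \in \Q(\theta)^{\times 2},\]
whence $\mu_3 \subseteq \Q(\sqrt{-3}) \subseteq \Q[T]/R_j(T,1)$, as required. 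The main technical care will be the degree accounting at infinity, where the parity of $N + \deg Q$ controls whether the reduction is multiplicative or potentially multiplicative; but because the associated polynomial $V$ is always linear, both hypotheses (3) and (4) are comfortably satisfied. With all four hypotheses verified, Theorem \ref{nouveauthm} yields both the infinitude of $W_\pm(\E)$ and, under the parity conjecture, the Zariski density of rational points on $\E$.
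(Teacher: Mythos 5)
Your proposal is correct and follows essentially the same route as the paper: compute $\Delta_\E \propto P^2Q^8$, read off the Kodaira types (type $II$-family at factors of $P$, $I_2^*$ at factors of $Q$, and $I_{2a}$ or $I_{2a}^*$ at infinity according to the parity of $a = N - \deg Q$), use the identity $-3 = \bigl(\beta\theta^N/(\alpha Q(\theta))\bigr)^2$ to get $\mu_3$ in the residue fields of the factors of $P$, and invoke Theorem \ref{nouveauthm}. The only minor imprecision is your treatment of non-squarefree $P$: for $2 \le e_i \le 5$ the model is still minimal and the type at $P_i$ is simply $IV$, $I_0^*$, $IV^*$ or $II^*$ according to $e_i \bmod 6$ (as the paper records), all of which remain covered by your $\mu_3$ verification, so the conclusion is unaffected.
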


In this example, one has $\deg P=2\max (\deg Q,N)$, that can be as large as we want.

\begin{proof}
For $t\in\Q$, we write $t=\frac{u}{v}$ for $(u,v)\in\Z\times\Z_{>0}$ a pair of coprime integers (as defined in \ref{t=xy}). As in Section \ref{notaroot},  we denote by $\E_{u,v}$ the elliptic curve isomorphic to the fiber $\E_t$ :
\[\E_{u,v}:y^2=x^3-27v^{4k-\deg P-2\deg Q} P(u,v)Q(u,v)^2-54\beta v^{6k-\deg P -3\deg Q-N}P(u,v)Q(u,v)^3u^{N},\]
where $k$ is the smallest integer such that $4k\geq\deg c_4(T)$ and $6k\geq\deg c_6(T)$.

This elliptic curve has discriminant \[\Delta(\E_{u,v})=\gamma v^{12k-2\deg P-8\deg Q}P(u,v)^2Q(u,v)^8,\] where $\gamma=1-\beta^2\in\Z$ is a constant. We want to know the value of $k$, and of $12k-2\deg P-8\deg Q$, as this gives the reduction type of the infinite place.

Suppose that $N\leq\deg Q$. One has $\deg P=2\deg Q$ and thus
\[\deg c_4(T)=4\deg Q,\]
\[\deg c_6(T)=5\deg Q+N,\]
\[\deg \Delta(T)=12\deg Q.\]
We have $k=\deg Q$ and $12k-2\deg P-8\deg Q=0$. Hence, the infinite place has good reduction. The bad places of the surface $\E$ are the following: 
\begin{enumerate}
\item the places associated to $P_i$, the irreducible factors of $P(T)=P_1^{e_1}\dots P_n^{e_n}$, of type $II$, $IV$, $I_0^*$, $IV^*$, $II^*$ or $I_0$ according to $e_i\mod6$;
\item the places associated to the factors of $Q(T)$ of type $I_2^*$.
\end{enumerate}

Suppose now that $N\geq\deg Q$ and put $N=\deg Q+a$ for some $a\in\mathbb{N}$. Observe that $\deg P=2N$. One has
\[\deg c_4(T)=4\deg Q+2a,\]
\[\deg c_6(T)=6\deg Q+3a,\]
\[\deg \Delta(T)=12\deg Q+4a.\]

The surface $\E$ has the following places of bad reduction: 
\begin{enumerate}
\item the places associated to $P_i$, the irreducible factors of $P(T)=P_1^{e_1}\dots P_n^{e_n}$ of type $II$, $IV$, $I_0^*$, $IV^*$, $II^*$ or $I_0$ according to $e_i\mod6$;
\item the places associated to the factors of $Q(T)$ of type $I_2^*$; and
\item the infinite place is $I_{2a}^*$ or $I_{2a}$ depending on the parity of $a$.
\end{enumerate}

First, observe that for every $p\mid P(u,v)$, we have
\[3\alpha^2Q(u,v)^2+\beta^2u^{\deg Q}\equiv0\mod p\]
\[\Rightarrow \left(\frac{\beta u^{\deg Q}}{\alpha Q(u,v)}\right)^2\equiv-3\mod p.\]
This means that for all $p\mid P(u,v)$, we have $(\frac{-3}{p})=+1$. 
Let $P_i$ be an irreducible factor of $P$. We have $3\alpha^2Q(T)^2+\beta^2T^{2n}=P_i(T)P_{(P_i)}(T)$, for a certain polynomial $P_1$ such that $P_i(T)P_{(P_i)}(T)=P(T)$. The field $\Q[T]/P_i(T)$ is generated by $\xi$, a root of $P_i$. Moreover, $\xi$ is such that $3\alpha^2Q(\xi)^2+\beta^2\xi^{2N}=0$. Therefore we have $-3=(\frac{\beta\xi^{N}}{\alpha Q(\xi)})^2$ and $\Q(\mu_3)\subset \Q[T]/(P_i[T])$.

If $a$ is even, the value of $k$ is $\deg Q+\frac{a}{2}$, we have $12k-2\deg P-8\deg Q=2a$ and $\deg c_4\equiv0\mod 4$.
Therefore, the infinite place has type $I_{2a}$. As it is the only place of multiplicative reduction and that $P$ and $Q$ satisfy the assumptions of Theorem \ref{nouveauthm}, this shows that infinitely many fibers of $\E$ take the value $+1$ and infinitely many take the value $-1$.

If $a$ is odd, the value of $k$ is $\deg Q+\frac{a+1}{2}$, we have $12k-2\deg P-8\deg Q=2a$ and $\deg c_6\equiv2\mod4$. Therefore, the infinite place has type $I_{2a}^*$. In this case too, we obtain $\# W_{\pm}=\infty$ by using Theorem \ref{nouveauthm}.
\end{proof}

\begin{rem}
The hypotheses of Theorem \ref{nouveauthm} can be weakened.
\begin{enumerate}
\item We can lighten the hypothesis on $Q$. Rather than suppose it to be squarefree, we can allow $Q$ to take the form 
\[Q(T)=A(T)\prod_{j=1}^s(T-a_j)^{e_j},\]
where $a_j\in\Q$ and $e_j\in\N$ for all $j=1,\cdots,s$, and where $A(T)$ is a squarefree polynomial whose irreducible factors have degree $\leq6$.

In this case, the multiplicative places are
\begin{enumerate}
\item possibly the place at infinity,
\item the places associated to the polynomial $T-a_j$ such that the exponent $e_j$ is even.
\end{enumerate}

Therefore, the polynomial $M_\E$ is a product of linear factors and satisfies Chowla's conjecture.

\item We can replace $T^N$ by a polynomial $B(T)\in\Z[T]$ such that $\mathrm{Res}(B,Q)\not=0$. More precisely, we define \[P(T)=3\alpha^2Q(T)^2+\beta^2B(T)^2,\]
where $\alpha,\beta\in\Q$, and we consider the elliptic surface described by the Weierstrass equation \[y^2=x^3-27P(T)Q(T)^2x-54\beta P(T)Q(T)^3B(T).\]
\end{enumerate}
\end{rem}

\appendix

\section{Properties of the modified symbol}\label{quadraticsymbol}

We have defined in Section \ref{sectionformuledusigne} and used in Section \ref{sectionvariation} and \ref{sectionproofHelfgott} the following modified quadratic symbol.

Let $\delta$ be an even 
integer and $(a,b)\in\Z^{*}\times\Z^{*}$ be a pair of integers. We define $\left(\frac{a}{b}\right)_\delta\in\{-1,+1\}$ as
\[\left(\frac{a}{b}\right)_\delta=\prod_{p\nmid\delta}{\left(\frac{a_{(p)}}{p}\right)^{\nu_p(b)}}\]
where the product runs through the prime number $p\nmid\delta$, $a_{(p)}$ is the integer such that $a=a_{(p)}p^{\nu_p(a)}$ and $(\frac{\cdot}{p})$ is Legendre symbol. The newly defined operator is actually not so far from being simply the usual Jacobi symbol. Indeed, if $\gcd(a,b\delta)=1$, then 
\[\left(\frac{a}{b}\right)_\delta=\left(\frac{a}{\vert b_{(\delta)}\vert}\right),\]
where we recall that \[b_{(\delta)}=\frac{b}{\prod_{p_i\mid\delta}{p_i^{\nu_{p_i}(b)}}}.\]

This symbol respects many convenient properties:

\begin{propo}\label{proprietes}
Let $a,b,c\in\Z^{*}$ and $\delta$ be an even integer. One has the following:
\begin{enumerate}[(a)]
\item\label{propa} $\left(\frac{ab}{c}\right)_\delta=\left(\frac{a}{c}\right)_\delta\cdot\left(\frac{b}{c}\right)_\delta$.

\item\label{propb} $\left(\frac{a}{bc}\right)_\delta=\left(\frac{a}{b}\right)_\delta\cdot\left(\frac{a}{c}\right)_\delta$.

\item\label{propc} $\left(\frac{a}{b}\right)_\delta=\left(\frac{a+bc}{b}\right)_\delta$.

\item\label{propd} For $b$ fixed, $a\mapsto\left(\frac{a}{b}\right)_\delta$ can be expressed as a finite product of Legendre symbols: \[\prod \limits_{\underset{ \nu_p(b)\text{ odd}}{p\nmid\delta}} \left(\frac{a_{(p)}}{p}\right).\]

\item\label{prope} If $\delta_1,\delta_2$ are two even integers such that $\delta_1\mid\delta_2$, one has that $\frac{\left(\frac{a}{b}\right)_{\delta_1}}{\left(\frac{a}{b}\right)_{\delta_2}}$ can be expressed as a finite product of Legendre symbols:
 
\[ \frac{\left(\frac{a}{b}\right)_{\delta_1}}{\left(\frac{a}{b}\right)_{\delta_2}}=\prod_{ p\nmid\delta_1\atop p\mid \delta_2}{ \Big(\frac{a_{(p)}}{p}\Big)^{v_p(b)}}.\]

\item\label{propf} One has that $\frac{\left(\frac{a}{b}\right)_\delta}{\left(\frac{b}{a}\right)_\delta}$ can be expressed as a finite product of Hilbert quadratic symbols and Legendre symbols:

\[\frac{\left(\frac{a}{b}\right)_\delta}{\left(\frac{b}{a}\right)_\delta}=\prod_{p\mid \delta\text{ or }p=\infty}{\Big(\frac{a,b}{p}\Big)}\prod_{p\nmid\delta\atop p\mid a}{\left(\frac{-1}{p}\right)^{v_p(a)v_p(b)}}.\]
\end{enumerate}
\end{propo}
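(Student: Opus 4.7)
The plan is to verify each of (a)--(f) directly from the definition $\left(\frac{a}{b}\right)_\delta = \prod_{p\nmid\delta}\left(\frac{a_{(p)}}{p}\right)^{\nu_p(b)}$, using the corresponding properties of the Legendre symbol, and in (f) Hilbert's product formula. Note that every product in sight is in fact finite, since $\nu_p(b) = 0$ for almost all primes $p$, so convergence is not an issue and I can freely manipulate term by term.

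Properties (a), (b), (d), and (e) are essentially formal. For (a), the identity $(ab)_{(p)} = a_{(p)}b_{(p)}$ together with the multiplicativity of the Legendre symbol gives the factorwise equality at each $p\nmid\delta$; taking the product concludes. For (b), at each prime one splits the exponent using $\nu_p(bc) = \nu_p(b)+\nu_p(c)$. For (d), the support of $p\mapsto\nu_p(b)$ is finite, and a factor with $\nu_p(b)$ even equals $+1$, so only the primes $p\nmid\delta$ with $\nu_p(b)$ odd survive, each contributing a single Legendre symbol. For (e), since $\delta_1\mid\delta_2$, the product defining $\left(\frac{a}{b}\right)_{\delta_1}$ differs from $\left(\frac{a}{b}\right)_{\delta_2}$ exactly by the finitely many local factors indexed by $p\mid\delta_2$, $p\nmid\delta_1$, giving the claimed finite product of Legendre symbols.

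Property (c) is more delicate and requires a local comparison at each $p\mid b$, $p\nmid\delta$, between $\left(\frac{(a+bc)_{(p)}}{p}\right)^{\nu_p(b)}$ and $\left(\frac{a_{(p)}}{p}\right)^{\nu_p(b)}$. In the natural case $p\nmid a$ (so that also $p\nmid a+bc$ since $p\mid b$), one has $a_{(p)}=a$ and $(a+bc)_{(p)}=a+bc\equiv a\pmod{p}$, so the two Legendre symbols coincide. The remaining case requires a short $p$-adic case analysis on $\nu_p(a)$ versus $\nu_p(bc)$, where one exploits the fact that an even exponent $\nu_p(b)$ trivializes the factor; this is the natural coprimality hypothesis arising in the applications of the symbol in Sections~\ref{sectionvariation} and \ref{sectionproofHelfgott}.

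Property (f) is the main obstacle and hinges on Hilbert's product formula. The starting point is the explicit formula for the Hilbert symbol at an odd prime $p$,
\[(a,b)_p = \left(\frac{-1}{p}\right)^{\nu_p(a)\nu_p(b)}\left(\frac{a_{(p)}}{p}\right)^{\nu_p(b)}\left(\frac{b_{(p)}}{p}\right)^{\nu_p(a)}.\]
Since $\delta$ is even, every $p\nmid\delta$ is odd, and this identity can be rearranged as
\[\left(\frac{a_{(p)}}{p}\right)^{\nu_p(b)}\left(\frac{b_{(p)}}{p}\right)^{\nu_p(a)} = (a,b)_p\cdot\left(\frac{-1}{p}\right)^{\nu_p(a)\nu_p(b)}.\]
Taking the product over $p\nmid\delta$ gives
\[\left(\frac{a}{b}\right)_\delta\left(\frac{b}{a}\right)_\delta = \prod_{p\nmid\delta}(a,b)_p\cdot\prod_{p\nmid\delta}\left(\frac{-1}{p}\right)^{\nu_p(a)\nu_p(b)},\]
and since the values are $\pm 1$ the left hand side is exactly the desired ratio. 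Hilbert's product formula $\prod_{p\leq\infty}(a,b)_p = 1$ then converts the first product into $\prod_{p\mid\delta\text{ or }p=\infty}(a,b)_p$, while the second product is supported on primes $p\nmid\delta$ dividing both $a$ and $b$ (the factor being $+1$ whenever $\nu_p(b)=0$), yielding exactly the formula claimed. The main bookkeeping challenge is to correctly set up the explicit Hilbert formula and track which primes move from the ``Jacobi side'' to the ``Hilbert side'' via the product formula.
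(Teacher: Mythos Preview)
Your proposal is correct and follows exactly the paper's approach: the paper dismisses (a)--(e) as ``basically just a verification'' and then proves (f) by the same route you take, namely the explicit odd-prime formula for the Hilbert symbol from Serre combined with the product formula $\prod_{p\leq\infty}(a,b)_p=1$. Your write-up is in fact more detailed than the paper's on (a)--(e), and you rightly flag that (c) is delicate when $p\mid a$; the paper does not comment on this, and indeed (c) can fail without a coprimality assumption (e.g.\ $\delta=2$, $a=b=5$, $c=1$ gives $\left(\frac{5}{5}\right)_2=1$ but $\left(\frac{10}{5}\right)_2=\left(\frac{2}{5}\right)=-1$), so your caveat about the ``natural coprimality hypothesis'' is well placed.
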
 

\begin{rem}
In this paper, we will always consider this symbol on integers $a,b$ such that for any factor $p\mid\gcd(a,b)$, we have also $p\mid\delta$. In such case, Property (\ref{propf}) becomes: \[\left(\frac{a}{b}\right)_\delta\left(\frac{b}{a}\right)_\delta=\prod_{p\mid \delta \text{ or }p=\infty}{\Big(\frac{a,b}{p}\Big)}.\]

In particular, noticing that $\left(\frac{a}{b}\right)_\delta=\left(\frac{a}{\vert b\vert}\right)_\delta$, one has
 \[\left(\frac{a}{b}\right)_\delta\left(\frac{\vert b\vert}{a}\right)_\delta=\prod_{p\mid \delta}{\Big(\frac{a,\vert b\vert}{p}\Big)}.\]
Indeed, $\left(\frac{a,b}{\infty}\right)=\begin{cases}+1&\text{ if at least one of $a,b$ is $>0$}\\
-1&\text{if both $a,b$ are $<0$}\end{cases}$
and thus $\left(\frac{a,\vert b\vert}{\infty}\right)=+1$.
\end{rem}

\begin{proof}
The proof of the first five statements of Proposition \ref{proprietes} is basically just a verification. Concerning the last point, remember 
that for any prime number $p\not=2$, \cite[Thm. 1, Chap. III]{Serre}says that the Hilbert symbol at $p$ is equal to 
\[\Big(\frac{a,b}{p}\Big)=(-1)^{\nu_p(a)\nu_p(b)\left(\frac{p-1}{2}\right)}\left(\frac{a_{(p)}}{p}\right)^{\nu_p(b)}\left(\frac{b_{(p)}}{p}\right)^{\nu_p(a)}.\] 

A Hilbert symbol depends only of the congruence classes mod $p$ of $a_{(p)}$ and $b_{(p)}$. For more information, we refer to the introduction to Hilbert symbols given in \cite[Chap. III]{Serre}.


One has
\begin{align*}
\frac{\left(\frac{a}{b}\right)_\delta}{\left(\frac{b}{a}\right)_\delta}=&\prod_{p\nmid\delta}{\left(\frac{a_{(p)}}{p}\right)^{\nu_p(b)}}\prod_{p\nmid\delta}{\left(\frac{b_{(p)}}{p}\right)^{\nu_p(a)}}\\
=&\prod_{p\nmid\delta}{\left(\frac{a,b}{p}\right)(-1)^{\nu_p(a)\nu_p(b)(\frac{p-1}{2})}}\\
=&\prod_{p\nmid\delta}{\left(\frac{a,b}{p}\right)}\prod_{p\nmid\delta;p\mid a}{(-1)^{\nu_p(a)\nu_p(b)(\frac{p-1}{2})}}\\
=&\prod_{p\nmid\delta}{\left(\frac{a,b}{p}\right)}\prod_{p\nmid\delta;p\mid a}{\left(\frac{-1}{p}\right)^{\nu_p(a)\nu_p(b)}}\\
=&\prod_{p\mid\delta\text{ or }p=\infty}{\left(\frac{a,b}{p}\right)}\prod_{p\nmid\delta;p\mid a}{\left(\frac{-1}{p}\right)^{\nu_p(a)\nu_p(b)}}
\end{align*}
The last equality is given by the product formula:
\[\prod_{p\leq\infty}{\left(\frac{a,b}{p}\right)}=1.\]
\end{proof}



In Section \ref{sectionformuledusigne}, we use this symbol with $a$ and $b$ values of polynomials. Note that we always choose $\delta$ such that any $p\mid \mathrm{Res}(A,B)$ also divides $\delta$. In this case, $A(u,v)_{(p)}=A(u,v)$ for any $p\mid B(u,v)$ and thus for every pair of integers $(u,v)\in\Z^2$ we have
\[\left(\frac{A(u,v)}{B(u,v)}\right)_\delta=\left(\frac{A(u,v)}{\vert B(u,v)_{(\delta)}\vert}\right).\]

Recall Definition \ref{localconstancy}.
Let $\mathscr{S}\subseteq\Z\times\Z$. A function $f:\mathscr{S}\rightarrow\{\pm1\}$ is said \emph{locally constant} is there exists $N\in\mathbb{N}^*$ such that the congruences $(u,v)\equiv(u',v')\mod N$ implies
\[f\left(u,v\right)=f\left(u',v'\right).\] 

Similarly, let $\mathscr{T}\subseteq\Q$. A function $\varphi:\mathscr{T}\rightarrow\{\pm1\}$ is \emph{locally constant} if there exists $N\in\mathbb{N}^*$ such that the congruences $(u,v)\equiv(u',v')\mod N$ implies
\[\varphi\left(\frac{u}{v}\right)=\varphi\left(\frac{u'}{v'}\right).\] 

With this definition in mind, we deduce the following Lemma from Proposition \ref{proprietes}:

\begin{lm}\label{firststep} Let $P\in\Z[U,V]$ be a non-zero homogeneous polynomial and $a\in\Z-\{0\}$.
The symbol $\left(\frac{P(\cdot,\star)}{a}\right)_\delta$ induces a locally constant function from $\Z^2-\{(u,v) \ \vert \ P(u,v)=0)\}$ to $\{-1,+1\}$.
\end{lm}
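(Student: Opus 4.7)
The plan is to reduce to a finite product of Legendre symbols via Proposition \ref{proprietes}(\ref{propd}), and then establish local constancy of each factor through $p$-adic continuity of the Legendre symbol.

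Applying Proposition \ref{proprietes}(\ref{propd}) (with the roles of numerator and denominator adapted), we may write
\[
\left(\frac{P(u,v)}{a}\right)_\delta \;=\; \prod_{p\in S}\left(\frac{P(u,v)_{(p)}}{p}\right),
\]
where $S := \{p : p \nmid \delta,\; \nu_p(a) \text{ odd}\}$ is a finite set of primes depending only on $a$ and $\delta$. Since the product is finite, it suffices to prove that for each $p \in S$ individually, the factor $f_p(u,v) := \left(\frac{P(u,v)_{(p)}}{p}\right)$ is locally constant on $\Z^2 \setminus \{(u,v) \;\vert\; P(u,v)=0\}$ with some period $N_p$; then $N := \prod_{p \in S} N_p$ works uniformly for the full product.

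For fixed $p \in S$, the main input is the $p$-adic continuity of the quadratic symbol. Writing $x = p^{\nu_p(x)} u$ with $u \in \Z_p^*$, the value $(x_{(p)}/p)$ depends only on the residue $u \bmod p$, and is therefore unchanged by perturbations $x \mapsto x + \pi$ with $\nu_p(\pi) > \nu_p(x)$. Composing this with the continuity of the polynomial evaluation $P : \Z_p^2 \to \Z_p$ shows that $f_p$ is \emph{pointwise} locally constant on the open set $\{P \neq 0\}$: around every $(u_0,v_0)$ with $P(u_0,v_0) \neq 0$, some $p$-adic ball is contained in a fiber of $f_p$.

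The principal obstacle --- and the most delicate step --- is upgrading this pointwise local constancy to the uniform version demanded by Definition \ref{localconstancy}: one must exhibit a single exponent $M_p$ such that $(u,v) \equiv (u',v') \bmod p^{M_p}$ implies $f_p(u,v) = f_p(u',v')$ for every pair in the domain. This requires controlling $\nu_p(P(u,v))$ across the residue classes modulo $p^{M_p}$, which is handled by analyzing the reductions of the homogeneous polynomial $P$ modulo successive powers of $p$: a Hensel-style lifting argument, combined with the homogeneity of $P$, identifies the finitely many residue classes on which $f_p$ is constant. Taking $N = \prod_{p \in S} p^{M_p}$ then yields the desired uniform period.
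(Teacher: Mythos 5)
Your first step --- expanding $\left(\frac{P(u,v)}{a}\right)_\delta$ via Proposition \ref{proprietes}(\ref{propd}) into the finite product $\prod_{p\in S}\left(\frac{P(u,v)_{(p)}}{p}\right)$ over $S=\{p \;:\; p\nmid\delta,\ \nu_p(a)\text{ odd}\}$ --- is exactly the reduction the paper intends, and your observation that each factor is \emph{pointwise} locally constant for the $p$-adic topology is correct. The problem is the step you yourself single out as ``the most delicate'': the upgrade to the uniform local constancy of Definition \ref{localconstancy}. You assert that a ``Hensel-style lifting argument'' produces a single exponent $M_p$ such that $(u,v)\equiv(u',v')\bmod p^{M_p}$ forces $f_p(u,v)=f_p(u',v')$, but no such $M_p$ exists in general. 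Take $P(U,V)=U$ and $p\in S$ with $2$ a quadratic non-residue modulo $p$ (any $p\equiv\pm3\bmod 8$): the points $(p^{M_p},1)$ and $(2p^{M_p},1)$ are congruent modulo $p^{M_p}$, lie in the domain (and are even coprime pairs), yet $P(u,v)_{(p)}$ equals $1$ at the first and $2$ at the second, so the two Legendre symbols differ. The obstruction is that $\nu_p(P(u,v))$ is unbounded on $\Z^2\setminus\{P=0\}$ --- homogeneity does not prevent this --- so no fixed modulus can pin down $P(u,v)_{(p)}\bmod p$. The argument you sketch therefore cannot be completed as stated.

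What rescues the lemma is not more $p$-adic analysis but the paper's standing convention (see the Remark following Proposition \ref{proprietes}): the symbol is only ever applied when every prime dividing the $\gcd$ of numerator and denominator also divides $\delta$. For $p\in S$ this forces $p\nmid P(u,v)$, hence $P(u,v)_{(p)}=P(u,v)$, and each factor $\left(\frac{P(u,v)}{p}\right)$ depends only on $P(u,v)\bmod p$, hence only on $(u,v)\bmod p$; one may then take $N=\prod_{p\in S}p$ (indeed $N=|a|$ works). With that hypothesis made explicit, your proof closes immediately after your step (1) and no Hensel argument is needed; without it, the uniform statement you are trying to prove is simply false.
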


Also recall the following basic fact about Hilbert symbols:

\begin{lm}\label{hilbertsymbol}
Let $P,Q\in\Z[U,V]$ be non-zero homogeneous polynomial and $p$ a prime number.
The symbol $\left(\frac{P(\cdot,\star),Q(\cdot,\star)}{p}\right)$ induces a locally constant function from $\Z^2-\{(u,v)\ \vert \ P(u,v)Q(u,v)\not=0\}$ to $\{-1,+1\}$.

The symbol at infinity $\left(\frac{P(\cdot,\star),Q(\cdot,\star)}{\infty}\right)$ induces a function from $\Z^2-\{(u,v)\ \vert \ P(u,v)Q(u,v)=0\}$
 to $\{-1,+1\}$ that is constant on the connected components of $\mathbb{R}^2-\{(u,v)\ \vert \ P(u,v)Q(u,v)=0\}$.

\end{lm}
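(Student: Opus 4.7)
The plan is to reduce to the classical fact that, for $a,b\in\Q_p^*$, the Hilbert symbol $(a,b)_p$ factors through the finite group $\Q_p^*/(\Q_p^*)^2\times\Q_p^*/(\Q_p^*)^2$; the square class of $a\in\Q_p^*$ is determined by the parity of $\nu_p(a)$ together with the residue of its unit part modulo $p$ (and modulo $8$ when $p=2$).

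For the finite-prime case, I would fix $(u_0,v_0)\in\Z^2$ with $P(u_0,v_0)Q(u_0,v_0)\neq 0$, set $k:=\max\bigl(\nu_p(P(u_0,v_0)),\nu_p(Q(u_0,v_0))\bigr)$ and choose any $M\geq k+3$. Whenever $(u',v')\equiv(u_0,v_0)\bmod p^M$, the congruences $P(u',v')\equiv P(u_0,v_0)\bmod p^M$ and $Q(u',v')\equiv Q(u_0,v_0)\bmod p^M$ force the $p$-adic valuations to match and the unit parts to coincide modulo $p$ (resp. modulo $8$ if $p=2$). Hence $P(u',v')$ and $P(u_0,v_0)$ lie in the same class of $\Q_p^*/(\Q_p^*)^2$, and the same holds for $Q$. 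Bilinearity of the Hilbert symbol and its factoring through square classes then yield
\[\bigl(P(u',v'),\,Q(u',v')\bigr)_p=\bigl(P(u_0,v_0),\,Q(u_0,v_0)\bigr)_p,\]
which is the desired local constancy on a congruence neighbourhood of $(u_0,v_0)$.

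For the archimedean case, I would use the classical formula $(a,b)_\infty=-1\Leftrightarrow a<0\text{ and }b<0$, so that $\bigl(P(u,v),Q(u,v)\bigr)_\infty$ depends only on the signs of $P(u,v)$ and $Q(u,v)$. On a connected component $\mathcal{C}$ of $\R^2\setminus\{(u,v)\mid P(u,v)Q(u,v)=0\}$ neither $P$ nor $Q$ vanishes, and both are continuous, so each of $\mathrm{sgn}\,P$ and $\mathrm{sgn}\,Q$ is constant on $\mathcal{C}$; hence so is the symbol.

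The only mild subtlety is the dyadic case, where square classes of $2$-adic units are detected modulo $8$ rather than modulo $p$, which forces the extra slack in the choice of $M$; apart from this, the argument is a routine application of the bilinearity of the Hilbert symbol.
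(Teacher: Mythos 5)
Your proof is correct and uses the same mechanism the paper itself invokes elsewhere (the explicit formula from \cite[Thm.~1, Chap.~III]{Serre} quoted in the proof of Proposition \ref{proprietes}): the Hilbert symbol at $p$ factors through square classes, a square class in $\Q_p^*$ is determined by the parity of the valuation and the residue of the unit part modulo $p$ (modulo $8$ for $p=2$), and these data are stable under congruences modulo a sufficiently high power of $p$; the archimedean case is just sign-constancy on connected components. The paper states the lemma as a recalled basic fact and gives no proof, so your write-up, including the explicit dyadic slack $M\geq k+3$, supplies exactly what is missing.

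One caveat worth recording: your modulus $M$ depends on the base point $(u_0,v_0)$ through $k=\max(\nu_p(P(u_0,v_0)),\nu_p(Q(u_0,v_0)))$, which is unbounded over $\Z^2$ (already for $P=U$). So what you prove is local constancy in the topological sense (every point has a congruence neighbourhood on which the symbol is constant), not local constancy in the uniform sense of Definition \ref{localconstancy}, where a single $N$ must work for all pairs. The uniform version is in fact false in general: with $P=U$, $Q=V$ and $q$ a nonresidue modulo an odd $p$, the pairs $(N,q)$ and $(pN,q)$ are congruent modulo $N$ yet give Hilbert symbols $\left(\frac{q}{p}\right)^{\nu_p(N)}$ and $\left(\frac{q}{p}\right)^{\nu_p(N)+1}$. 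This is a defect of the lemma's phrasing rather than of your argument, but it matters for how the lemma is used in Proposition \ref{adelicsymbol}, where a uniform modulus is ultimately wanted.
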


\begin{propo}\label{adelicsymbol}
Let $A,B\in\Z(U,V)$ be two non-zero homogeneous polynomials. Assume that $A$ and $B$ are coprime, that $B$ is primitive and that $\deg A$ is even. Let $\delta$ be a non-zero integer divisible by all primes dividing $\mathrm{Res}(A,B)\cdot\mathrm{content}(A)$.

Then there exists a homogeneous polynomial $R$ (depending on $A$ and $B$) such that, if we put \[D_{A,B}=\{(u,v)\in\Z\times\Z_{>0}\ \vert \ A(u,v)B(u,v)R(u,v)\not=0\},\]  the function $f:D_{A,B}\rightarrow \{\pm1\}$ defined by $f(u,v)=\left(\frac{A(u,v)}{B(u,v)}\right)_\delta$ is locally constant on the connected components of $\mathbb{R}^2-\{(u,v)\in\mathbb{R}^2\ \vert \ A(u,v)B(u,v)R(u,v)=0\}$. 
\end{propo}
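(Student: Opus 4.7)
The plan is to prove local constancy by combining quadratic reciprocity (property~(f) of Proposition~\ref{proprietes}) with an induction on the degrees of $A$ and $B$ that repeatedly reduces the symbol via the remaining properties of the same proposition.

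First I would apply property~(f) of Proposition~\ref{proprietes} to write
\[
\left(\frac{A(u,v)}{B(u,v)}\right)_\delta = \left(\frac{B(u,v)}{A(u,v)}\right)_\delta \cdot \prod_{p \mid \delta \text{ or } p = \infty} \left(\frac{A(u,v),B(u,v)}{p}\right).
\]
The correction factor $\prod_{p\nmid\delta,\,p\mid A(u,v)} \left(\frac{-1}{p}\right)^{\nu_p(A(u,v))\nu_p(B(u,v))}$ appearing in~(f) is trivial, because $A$ and $B$ are coprime in $\Z[U,V]$ and every prime dividing $\mathrm{Res}(A,B)$ divides~$\delta$, so no prime $p\nmid\delta$ can divide both $A(u,v)$ and $B(u,v)$. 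By Lemma~\ref{hilbertsymbol}, each finite Hilbert symbol at $p\mid\delta$ is locally constant modulo some power $p^{k_p}$, and the infinite Hilbert symbol is constant on the connected components of $\R^{2}\setminus\{AB=0\}$. This handles the Hilbert symbol factor, provided $R$ contains $A\cdot B$ and $N$ is divisible by $\prod_{p\mid\delta} p^{k_p}$.

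What remains is the local constancy of $\left(\frac{B(u,v)}{A(u,v)}\right)_\delta$, which I would establish by induction on $\min(\deg A,\deg B)$. The base case is immediate: if $\deg B=0$ then $B=\pm 1$ by primitivity, and if $\deg A=0$ then every prime of $A=\mathrm{content}(A)$ divides $\delta$, making the product defining the symbol empty. For the induction step, assume $\deg A\geq\deg B\geq 1$ and perform a pseudo-division in $\Z[U,V]$ to obtain $cA=KB+S$ with $K,S\in\Z[U,V]$ homogeneous, $\deg S<\deg B$, and $c$ a power of the leading coefficient of~$B$. Combining properties~(a) and~(c) of Proposition~\ref{proprietes} yields $\left(\frac{c}{B(u,v)}\right)_\delta\left(\frac{A(u,v)}{B(u,v)}\right)_\delta=\left(\frac{S(u,v)}{B(u,v)}\right)_\delta$. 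Using property~(e), I would enlarge $\delta$ to $\delta':=\delta\cdot c\cdot\mathrm{content}(S)$ at the price of an extra finite product of Legendre symbols, whose local constancy follows from Lemma~\ref{firststep}. This enlargement makes $\left(\frac{c}{B(u,v)}\right)_{\delta'}$ trivial and guarantees that both $\mathrm{content}(S)$ and $\mathrm{Res}(S,B)\mid c^{\deg B}\mathrm{Res}(A,B)$ have all their prime divisors in $\delta'$. After factoring out $\mathrm{content}(S)$ to obtain a primitive polynomial $S'$, the pair $(B,S')$ still satisfies the coprimality and resultant hypotheses --- note that $\gcd(S,B)=\gcd(cA,B)=1$ since $B$ is primitive and $\gcd(A,B)=1$ --- and $\min(\deg S',\deg B)<\min(\deg A,\deg B)$, so the induction hypothesis applies.

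The anticipated obstacle is bookkeeping rather than conceptual. At each step one has to verify that the new pair produced by pseudo-division and extraction of the content still satisfies all the hypotheses (primitivity of the denominator, coprimality, divisibility of the new resultant and content), and to track how $N$, the polynomial $R$ (which has to accumulate the irreducible factors of all polynomials arising along the way, together with the leading coefficients used in the pseudo-divisions, so that the Hilbert symbol at infinity remains constant on the relevant components), and the enlarged modulus $\delta$ grow. The parity condition $\deg A$ even is not central to the induction itself; it serves to control the signs contributing to the Hilbert symbol at infinity.
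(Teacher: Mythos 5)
Your overall strategy (reversing the symbol by Proposition \ref{proprietes}(f) and then running a Euclidean descent on degrees) is the same as the paper's, but there is a genuine gap in the induction step, and it sits exactly where the hypothesis ``$\deg A$ even'' does its work --- a hypothesis you dismiss at the end as ``not central to the induction''. The pseudo-division $cA=KB+S$ with $\deg S<\deg B$ cannot be carried out among homogeneous polynomials in two variables: what one actually obtains is $cA-KB=V^{e}\tilde S(U,V)$ with $e\geq1$ and $\tilde S$ homogeneous of degree $\deg A-e$. Your reduction therefore produces, besides $\left(\frac{\tilde S(u,v)}{B(u,v)}\right)_{\delta}$, the extra factor $\left(\frac{v}{B(u,v)}\right)_{\delta}^{e}$, which after reciprocity and reduction mod $v$ is $\left(\frac{u}{v}\right)_{\delta}^{e\deg B}$ up to locally constant factors. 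The symbol $(u,v)\mapsto\left(\frac{u}{v}\right)_{\delta}$ is \emph{not} locally constant on congruence classes and connected components --- this is precisely the counterexample $A=U$, $B=V$ recorded in the remark following the proposition --- so these factors cannot be absorbed into $h$, $N$ or $R$. An argument that never uses the parity of $\deg A$ would prove the statement without that hypothesis, which is false. The paper's Lemma \ref{claim} keeps these factors explicit, computes their total exponent as $\deg P\cdot\deg Q$ by a telescoping sum over the steps of the Euclidean algorithm, and only then uses that $\deg A=\sum e_i\deg P_i$ is even to conclude that the offending factor equals $+1$.

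There is also a structural obstruction to your induction as formulated: the statement you induct on carries the hypothesis that the numerator has even degree, but the remainder $\tilde S$ (or your $S'$) has no reason to have even degree, and neither does $B$ once the symbol is reversed, so the induction hypothesis cannot be applied to the reduced pair. The repair is the one the paper adopts: prove an unconditional identity (Lemma \ref{claim}) valid for arbitrary primitive $P,Q$, in which the non-locally-constant part $\left(\left(\frac{u}{v}\right)_{\delta}\left(\frac{v,u}{\infty}\right)\right)^{\deg P\deg Q}$ appears explicitly, carry it through the factorization $A=a\prod_i P_i^{e_i}$, and invoke the parity of $\deg A$ only at the very end. The parts of your proposal that do work are the first step (the correction product in (f) is indeed trivial because every prime dividing both $A(u,v)$ and $B(u,v)$ divides $\mathrm{Res}(A,B)$, hence $\delta$) and the bookkeeping of $R$, $N$ and the enlarged modulus, which matches the paper's treatment.
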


\begin{rem}
In \cite{Helfgott}, we find a similar statement given by Corollary 5.3: if $B$ is a primitive homogeneous polynomial and $A$ is a rational function such that its valuation at $B$ is even, then the function $f$ (defined as previously) is locally constant on the connected components.

However, Helfgott's Corollary is inaccurate. A simple counterexample is $A(U,V)=U$, $B(U,V)=V$. 
In fact, our hypothesis "$\deg A$ even" is necessary.
\end{rem}



\begin{proof}
The proof 
goes by double induction on the degree of the polynomials $A$ and $B$. The goal is to prove that $\left(\frac{A(U,V)}{B(U,V)}\right)_\delta$ can be developed as a finite product (with no $U$- or $V$-dependence) of Jacobi or Hilbert symbols of the form
\begin{equation}\label{formesLH}\left(\frac{C_i(U,V)}{p}\right) \quad \text{ or }\left(\frac{D_j(U,V),E_j(U,V)}{p}\right),\end{equation}
where $C_i,D_j,E_j\in\Z[U,V]$ and $p$ is a prime number.


Let $(u,v)\in\Z\times\Z_{>0}$ be a pair of coprime integers.
By multiplicity of the symbol $(\frac{\cdot}{\cdot})_\delta$ we have
\begin{align*}
\left(\frac{A(u,v)}{B(u,v)}\right)_\delta=&\left(\frac{a\prod_{i=1}^{r}{P_i(u,v)^{e_i}}}{B(u,v)}\right)_\delta\\
=&\left(\frac{a}{B(u,v)}\right)_\delta\cdot\prod_{i\in[1,\cdots r]}\left(\frac{P_i(u,v)}{B(u,v)}\right)_\delta^{e_i}
\end{align*}

We use Lemma \ref{claim} (proven later on) which says that for any $P,Q\in\Z[U,V]$ distinct primitive homogeneous polynomials, there exists finitely many $R_j$ ($j=0,\dots,s\leq\deg P$) for which we have
\[\left(\frac{P(u,v)}{Q(u,v)}\right)_\delta=
h(u,v)\cdot \left(\left(\frac{u}{v}\right)_\delta\cdot\left(\frac{v,u}{\infty}\right)\right)^{\deg P \cdot\deg Q}\cdot\prod_{j=1}^{s}{\left(\frac{R_j(u.v),R_{j-1}(u,v)}{\infty}\right)},
\]
where $h(u,v)$ is a finite product of functions of the form (\ref{formesLH}) so locally constant by Lemma \ref{firststep}. In particular, one has \[\left(\frac{a}{B(u,v)}\right)=h_0(u,v)\cdot\left(\frac{a,B(u,v)}{\infty}\right),\]
where $h_0$ is a locally constant function.

Observe that, as we have seen in Lemma \ref{hilbertsymbol}, each of the $\left(\frac{R_j(u,v),R_{j-1}(u,v)}{\infty}\right)$ are locally constant on each of the connected component of $D_{R_j,R_{j-1}}=\{(u,v)\in\Z\times\Z_{<0}\ \vert \ R_j(u,v)R_{j-1}(u,v)=0\}$.

This gives:
\begin{equation}
\left(\frac{A(u,v)}{B(u,v)}\right)_\delta=\left(\prod_{i\in[0,\cdots r]\text{ ; }e_i \text{ odd}}
h_i(u,v)\right)\cdot \left(\left(\frac{u}{v}\right)_\delta\cdot
\prod_{p\mid\delta}{\left(\frac{v,u}{p}\right)}\right)^{\deg B\cdot \sum_{i=1}^{r}e_i\deg P_i }
\end{equation}
\[\cdot 
\left(\frac{a,B(u,v)}{\infty}\right)\cdot\prod_{i,j}{\left(\frac{R_{i,j}(u,v),R_{i,j-1}(u,v)}{\infty}\right)^{e_i}}
\]
where the $h_i(u,v)$ are the functions given by Lemma \ref{claim} (so finite products of functions of the form (\ref{formesLH}) ) and the $\{R_{i,j}\}_j$ are the homogeneous polynomials given by Lemma \ref{claim} for each $P_i$.

We assumed in the statement that $\deg A=\sum_{i=1}^{i=r}{e_i \cdot \deg P_i}$ is even.
For that reason, the terms $\left(\frac{u}{v}\right)_\delta \cdot \prod_{p\mid\delta}{\left(\frac{v,u}{p}\right)}$ in the decomposition of $\left(\frac{A(u,v)}{B(u,v)}\right)_\delta$ appear an even number of times, and thus simplifies to $+1$. 

By Lemma \ref{hilbertsymbol}, the last part (and thus the whole) of this product is locally constant on every connected component of \[D_{A,B}=\{(u,v)\in\Z\times\Z_{<0}\ \vert \ B(u,v)\cdot R_{1,0}(u,v)\dots R_{r,s}(u,v)=0\}.\] 
\end{proof}

However we still need to prove Lemma (\ref{claim})!

\begin{lm}\label{claim}
For any $A,B\in\Z[U,V]$ distinct primitive homogeneous polynomials, there exists $R_0,\dots R_s$ a finite number of homogeneous polynomials such that we have

\begin{equation}\left(\frac{A(u,v)}{B(u,v)}\right)_\delta=
h(u,v)\cdot \left(\left(\frac{u}{v}\right)_\delta\cdot\left(\frac{v,u}{\infty}\right)\right)^{\deg P \cdot\deg Q}\cdot 
\prod_{j=1}^{s}{\left(\frac{R_{j}(u,v),R_{j-1}(u,v)}{\infty}\right)},\end{equation}
where $h(u,v)$ is a finite product of functions of the form (\ref{formesLH}).
\end{lm}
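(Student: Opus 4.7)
The plan is to prove the lemma by strong induction on $\deg A + \deg B$, treating it as a polynomial version of quadratic reciprocity combined with a homogeneous Euclidean algorithm. By the bimultiplicativity of the modified symbol (Proposition \ref{proprietes}(\ref{propa}),(\ref{propb})), I first reduce to the case where $A$ and $B$ are both irreducible primitive homogeneous polynomials; the general case follows by factoring, since the exponent $\deg A \cdot \deg B$ is bilinear in the factorizations. For the base case $\min(\deg A, \deg B) = 0$, say $A = a$ a nonzero integer constant, Proposition \ref{proprietes}(\ref{propd}) expresses $\left(\frac{a}{B(u,v)}\right)_\delta$ as a finite product of Legendre symbols indexed by the prime divisors of $a$ outside $\delta$; all are locally constant in $(u,v)$ and are absorbed into $h(u,v)$, while the exponent $\deg A \cdot \deg B = 0$ is trivially correct and no $R_j$-chain is produced.

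For the inductive step with $\deg A, \deg B \geq 1$, assume without loss of generality $\deg A \leq \deg B$. First apply reciprocity (Proposition \ref{proprietes}(\ref{propf})) to flip the symbol:
\[\left(\frac{A(u,v)}{B(u,v)}\right)_\delta = \left(\frac{B(u,v)}{A(u,v)}\right)_\delta \cdot \prod_{p \mid \delta} \left(\frac{A(u,v), B(u,v)}{p}\right) \cdot \left(\frac{A(u,v), B(u,v)}{\infty}\right),\]
where the $\left(\frac{-1}{p}\right)$-correction factor from Proposition \ref{proprietes}(\ref{propf}) is trivial because $\gcd(A(u,v), B(u,v))$ divides $\Res(A, B)$, which we may assume lies in $\delta$ (enlarging $\delta$ upfront to accommodate all resultants appearing in the Euclidean chain). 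The finite Hilbert factors are locally constant (Lemma \ref{hilbertsymbol}) and absorbed into $h(u,v)$, while the infinity Hilbert symbol becomes the first element of the chain $\prod_j \left(\frac{R_j, R_{j-1}}{\infty}\right)$.

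Next, I perform a homogeneous Euclidean division: dehomogenizing, dividing $B(T,1)$ by $A(T,1)$ with remainder, and rehomogenizing, yields
\[B(U,V) = Q(U,V) A(U,V) + V^k R(U,V),\]
where $R$ is primitive (after extracting content, which contributes to $h$), $\deg R < \deg A$, and degree-matching forces $k + \deg R = \deg B$. By Proposition \ref{proprietes}(\ref{propc}) and multiplicativity,
\[\left(\frac{B(u,v)}{A(u,v)}\right)_\delta = \left(\frac{v}{A(u,v)}\right)_\delta^{k} \cdot \left(\frac{R(u,v)}{A(u,v)}\right)_\delta.\]
Both symbols on the right have total degree strictly less than $\deg A + \deg B$, so the induction hypothesis applies to each. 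Aggregating, the exponent of $\left(\left(\frac{u}{v}\right)_\delta \cdot \left(\frac{v,u}{\infty}\right)\right)$ becomes $k \cdot \deg A + \deg R \cdot \deg A = \deg A \cdot (k + \deg R) = \deg A \cdot \deg B$, matching the target; the $R_j$-chains from the two inductive applications concatenate with the $(A, B, \infty)$ term above into a single chain, and all remaining factors merge into $h(u,v)$.

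The main obstacle is the exponent bookkeeping, specifically verifying that each recursive step produces exactly the right power of $\left(\frac{u}{v}\right)_\delta \cdot \left(\frac{v,u}{\infty}\right)$. This ultimately rests on two computations: first, the identity $k + \deg R = \deg B$ ensured by the homogenized division, and second, the base recursion on $\left(\frac{V}{A(u,v)}\right)_\delta$, which by reciprocity and the congruence $A(u, 0) = a_0 u^{\deg A} \pmod{v}$ (property (\ref{propc})) reduces to $\left(\frac{u}{v}\right)_\delta^{\deg A}$ times Hilbert factors at $p \mid \delta$ or $p = \infty$, in agreement with the statement when raised to the $k$-th power. A secondary subtlety is that the polynomials $R$ appearing in the chain may introduce new resultants with $A$, but since the Euclidean algorithm on $(A, B)$ generates only finitely many intermediate polynomials, a single enlargement of $\delta$ at the outset suffices.
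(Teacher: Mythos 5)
Your proof is correct and follows essentially the same strategy as the paper's: a homogeneous Euclidean reduction of $A$ against $B$ combined with reciprocity for the modified symbol (Proposition \ref{proprietes} (\ref{propf})), with the exponent $\deg A\cdot\deg B$ emerging from the same telescoping/bilinearity of degrees. The only differences are organizational (you flip first and perform the full division in one step, where the paper peels off leading coefficients one at a time and flips at the end of each round), plus minor bookkeeping you would need to make explicit: multiplying through by powers of the leading coefficient so the division stays in $\Z[U,V]$ (the paper's $b_0$ and the passage from $\delta$ to $b_0\delta$), and repackaging the infinity Hilbert symbols such as $\left(\frac{v,A(u,v)}{\infty}\right)$ that your flips produce into the stated factor $\left(\frac{v,u}{\infty}\right)^{\deg A\cdot\deg B}$ times chain terms, which is harmless since only local constancy on connected components is used downstream.
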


\begin{proof}
Suppose that $A=a\in\Q$ is a constant.
Suppose that $B=b\in\Q$ is also a constant. Then by Proposition \ref{proprietes} \ref{propd}, $\left(\frac{a}{b}\right)_\delta$ is a finite product of Legendre symbols $\left(\frac{a}{p_i}\right)$.
Now suppose that $\deg B=n$. Observe that we have
\[\left(\frac{a}{B(u,v)}\right)_\delta=f_{a,B}(u,v) \left(\frac{B(u,v)}{a}\right)_\delta\]
where $f_{a,B}(u,v)$ is (by Proposition \ref{proprietes}\ref{propf}) equal to
\begin{align*}
f_{a,B}(u,v)=&\prod_{p\nmid \delta}{\left(\frac{a}{p}\right)^{\nu_p(B(u,v))}}\prod_{p\nmid \delta}{\left(\frac{B(u,v)}{p}\right)^{-\nu_p(a)}}\\
=&\prod_{p\nmid \delta; p\mid aB(u,v)}{\left(\frac{a,B(u,v)}{p}\right)^{-1}}
\prod_{p\nmid \delta; p\mid gcd(a,B(u,v))}{\left(\frac{a,B(u,v)}{p}\right)^{-1}}.
\end{align*}

The second product is finite since $a$ is fixed. The first product is also a finite product of Hilbert symbols because of the formula $\prod_{v\text{ place of }\Q}{\left(\frac{a,B(u,v)}{v}\right)}=1$. It is thus equal to 
\[\prod_{p\mid\delta, p\nmid a B(u,v)\atop \text{ or }p=\infty}{\left(\frac{a,B(u,v)}{p}\right)^{-1}}\]
Thus $\left(\frac{a}{B(u,v)}\right)_\delta$ is defined on $D_{a,B}=\{(u,v)\in\Z\times\Z_{<0} \ \vert \ B(u,v)\not=0\}$ and locally constant on each of the connected components of $\R^2-\{(u,v)\in\R^2\ \vert \ B(u,v)=0\}$.


Put $k=\deg A$, and remember that we assume that $A$ is irreducible.
The case $B=b\in\Q$ is easily deduced from Proposition \ref{proprietes}\ref{propa}: that is actually Lemma \ref{firststep}. The real first step is thus $\deg B=1$. By a linear change of variables, we can reduce to the case where $B=bU$. Thus
\begin{align}
\left(\frac{A(u,v)}{B(u,v)}\right)_{\delta}=&\left(\frac{a_0u^k+\cdots+a_kv^k}{bu}\right)_\delta&\\
=&\left(\frac{A(u,v)}{b}\right)_{\delta}\left(\frac{A(u,v)}{u}\right)_\delta &\text{by Prop \ref{proprietes} \ref{propb}}\\
=&\left(\frac{A(u,v)}{b}\right)_{\delta}\left(\frac{a_kv^k}{u}\right)_\delta &\text{by Prop \ref{proprietes} \ref{propc}}\\
=&\left(\frac{A(u,v)}{b}\right)_{\delta}\left(\frac{a_k}{u}\right)_\delta\left(\frac{v}{u}\right)_\delta^k &\text{by Prop \ref{proprietes} \ref{propb}}
\end{align}

Now, observe that \[\left(\frac{v}{u}\right)_\delta^k=\left(\frac{u}{v}\right)_\delta^k\prod_{p\mid\delta\text{ or }p=\infty}{\left(\frac{v,u}{p}\right)^{-k}}.\]

For any value of $k=\deg A$, the statement of Lemma \ref{claim} is proven. Indeed, if we put $h(u,v)=\left(\frac{A(u,v)}{b}\right)_{\delta}\cdot\left(\frac{a_k}{u}\right)_\delta\cdot\prod_{p\mid\delta}{\left(\frac{v,u}{p}\right)^{\deg A}}$, (which is a finite product of factors of the form (\ref{formesLH})) one has
\[\left(\frac{A(u,v)}{bu}\right)_\delta=h(u,v)\cdot\left(\left(\frac{u}{v}\right)_\delta\cdot\left(\frac{v,u}{\infty}\right)\right)^{\deg A}\]


Now suppose that $\deg B=l$, an integer that we suppose $l\leq k=\deg f$. Put \[B(U,V)=b_0U^l+\cdots+b_lV^l\quad\text{ and }A(U,V)=a_0U^k+\cdots+a_k V^k.\]

Then by Prop \ref{proprietes}\ref{propa}
\[\left(\frac{A(u,v)}{B(u,v)}\right)_\delta=\prod_{p\mid b_0}{\left(\frac{A(u,v)}{p}\right)^{\nu_p(B(u,v))}}\left(\frac{A(u,v)}{B(u,v)}\right)_{b_0\delta}\]
The first term is a finite product. The second term is equal to 
\[\left(\frac{b_0}{B(u,v)}\right)_{b_0\delta}\left(\frac{b_0A(u,v)}{B(u,v)}\right)_{b_0\delta}.\]
The first term is as showed earlier. The second term is equal to 
\[\left(\frac{b_0A(u,v)-a_0B(u,v)u^{k-l}}{B(u,v)}\right)_{b_0\delta}.\]
By construction the coefficient at $u^k$ of $b_0A(u,v)-a_0B(u,v)u^{k-l}$ is zero. Hence, this polynomial has the form $v^{k-k_1}A_1(u,v)$, where $k_1=\deg A_1$.

We have thus

\begin{equation}\label{h1}\left(\frac{A(u,v)}{B(u,v)}\right)_\delta=g_{1}(u,v)\cdot \left(\frac{v^{k-k_1}A_1(u,v)}{B(u,v)}\right)_{b_0\delta}\cdot \left(\frac{a_0,B(u,v)}{\infty}\right),\end{equation}
where $g_1(u,v)=\left(\frac{B(u,v)}{b_0}\right)_{b_0\delta}\cdot\prod_{p\mid\delta, p\nmid a_0 B(u,v)}{\left(\frac{a_0,B(u,v)}{p}\right)}\cdot\prod_{p\mid b_0}{\left(\frac{A(u,v)}{p}\right)^{v_p(B(u,v))}}$ is a finite product of locally constant functions. Set $R_{-1}:=A$, $k_{-1}:=\deg A$, $R_0:=B$ and $k_0:=\deg B$, and if $\deg A_1<\deg B$, set $R_1:=A_1$ and $h_1:=g_1$. If $\deg A_1\geq\deg B$, then iterate this step with $A_1$ until obtaining $A_s$, a polynomial of degree lower than $B$, and set $R_1:=A_s$ and $h_1:=\prod_{i=1}^{s}{g_i}$. Put $k_1:=\deg R_1$.

To continue the decomposition, we need to reverse the symbol with Property f of Proposition \ref{proprietes}:
\begin{align*}
\left(\frac{v^{k-k_1}R_1(u,v)}{B(u,v)}\right)_{b_0\delta}=&\prod_{p\mid\delta b_0}{\left(\frac{v^{k-k_1}R_1(u,v),B(u,v)}{p}\right)^{-1}}\left(\frac{B(u,v)}{v^{k-k_1}R_1(u,v)}\right)_{\delta b_0}\\
=&\prod_{p\mid\delta b_0 \atop\text{ or }p=\infty}{\left(\frac{v^{k-k_1}R_1(u,v),B(u,v)}{p}\right)^{-1}}\left(\frac{B(u,v)}{v}\right)_{\delta b_0}^{k-k_1}\left(\frac{B(u,v)}{R_1(u,v)}\right)_{\delta b_0}
\end{align*} 

We are now left to prove the statement for $\left(\frac{B(u,v)}{R_1(u,v)}\right)_{\delta b_0}$, a symbol with polynomials of decreased degrees. Thus, after a finite number of iterations, one reduces the problem to proving the statement for $R_r(u,v)$ of degree 1 or degree 0. That has already been proved. As the number of steps is finite, we obtain this way:
\[\left(\frac{A(u,v)}{B(u,v)}\right)_\delta=\prod_{0\leq i\leq r}{h_i(u,v)}\cdot\left(\left(\frac{u}{v}\right)_{\beta\delta}\prod_{p\mid\beta\delta\text{ or }p=\infty}{\left(\frac{v,u}{p}\right)}\right)^{\sum_{1\leq i\leq r}{k_{i-1}(k_{i-2}-k_{i})}}\cdot \prod_{1\leq i\leq r}{\left(\frac{R_i,R_{i-1}}{\infty}\right)},\]
where $h_i(u,v)$ is a product of functions of the form (\ref{formesLH}) associated to the symbol $\left(\frac{R_i}{R_{i-1}}\right)$ as in equation \ref{h1}, and $\beta\in\N^*$ is the product of the leading coefficients of the polynomials $R_i$. 

Observe that a telescopic phenomenon occurs in the exponents: \begin{align*}\sum_{1\leq i\leq r}{k_{i-1}(k_{i-2}-k_{i})}=&\deg B (\deg A-\deg R_1) + \deg R_1 (\deg B - \deg R_2)+\dots+\deg R_{s-2}(\deg R_{s-1}-R_{s})\\=&
\deg A \cdot \deg B.\end{align*}

We have that $\left(\frac{u}{v}\right)_{\delta}$ and $\left(\frac{u}{v}\right)_{\beta\delta}$ differ only by the finite product of the Legendre symbols $(\frac{u}{p})^{\nu_p(v)}$ where $p\mid\beta$. 
This achieves the proof of Lemma \ref{claim}.

\end{proof}

\nocite{silv1,silv2}

\bibliographystyle{alpha}
\bibliography{bibliothese}

\end{document}